\newtheorem{thm}{Theorem}[section]
\newtheorem{lem}[thm]{Lemma}
\newtheorem{cor}[thm]{Corollary}
\newtheorem{prop}[thm]{Proposition}
\theoremstyle{definition}
\newtheorem{dfn}[thm]{Definition}
\theoremstyle{remark}
\newtheorem{rem}[thm]{Remark}
\newcommand{\be}{\begin{eqnarray*}}
\newcommand{\ee}{\end{eqnarray*}}
\newcommand{\ba}{\begin{align*}}
\newcommand{\ea}{\end{align*}}
\newcommand{\Z}{\mathbb{Z}}
\newcommand{\Q}{\mathbb{Q}}
\newcommand{\CP}{\mathbb{C}\text{P}}
\newcommand{\x}{\times}
\newcommand{\xto}{\xrightarrow}
\newcommand{\hto}{\hookrightarrow}
\newcommand{\ot}{\otimes}
\newcommand{\im}{\text{im}\,}
\newcommand{\id}{\text{id}}
\newcommand{\ga}{\alpha}
\newcommand{\gc}{\gamma}
\numberwithin{equation}{section}
\begin{document}
\sloppy
\title[A smooth variation of BS Theory and PSC]{A smooth variation of Baas-Sullivan Theory and Positive Scalar Curvature}

\author{Sven F\"uhring}
\address{Department of Mathematics, University of Augsburg}
\email{sven.fuehring@uni-augsburg.de}
\subjclass[2010]{Primary 53C20; Secondary 55N22}
\keywords{Bordism, Baas-Sullivan Theory, Positive Scalar Curvature} 
\date{March 19, 2012}

\begin{abstract}

Let $M$ be a smooth closed spin (resp.~ oriented and totally non-spin) manifold of dimension $n\geq 5$ with fundamental group $\pi$. It is stated, e.g.~ in \cite{rosto}, that $M$ admits a metric of positive scalar curvature (pscm) if its orientation class in $ko_n(B\pi)$ (resp.~ $H_n(B\pi;\Z)$) lies in the subgroup consisting of elements which contain pscm representatives. This is $2$-locally verified loc.~ cit.~ and in \cite{mspinsplit}. After inverting $2$ it was announced that a proof would be carried out in \cite{jung}, but this work has never appeared in print. The purpose of our paper is to present a self-contained proof of the statement with $2$ inverted.

\end{abstract}

\maketitle




\section{Introduction}

A basic question in Riemannian geometry is whether a given smooth closed manifold $M$ admits a metric of positive scalar curvature or not. Bordism theory is an important tool to approach this problem. On the one hand the surgery lemma (cf.~ \cite{gl},\cite{sy}) guarantees that under mild conditions the existence of a pscm is invariant under bordism. On the other hand, in case $M$ admits a spin structure, a certain characteristic class $\ga(M)$, again invariant under bordism, grants an obstruction to the existence of a pscm (cf.~ \cite{li},\cite{hitchin}).

Let $X$ be a space and $G=Spin$ or $SO$. As usual we denote the bordism groups of spin resp.~ oriented manifolds in $X$ by $\Omega_*^G(X)$. An element $[M,f]\in\Omega_n^G(X)$ is a bordism class of continuous maps $f\colon M\to X$ where $M$ is a smooth closed spin resp.~ oriented manifold of dimension $n$. We set
\[
{^+}\Omega_n^G(X):=\{[M,f]\in\Omega_n^{G}(X)\, |\, M\text{ admits a pscm}\}.
\]
In $\Omega_n^G(X)$ addition is given by the disjoint union of manifolds and taking inverses by reversing the spin structure resp.~ orientation. Hence ${^+}\Omega_n^G(X)$ in fact becomes a subgroup of $\Omega_n^G(X)$.
One can combine the surgery lemma with methods from the proof of the s-cobordism theorem to obtain the following existence result:
\begin{thm}[\cite{gl},\cite{rosensto}]\label{glr}
Let $M$ be a smooth connected closed manifold of dimension $n\geq 5$ with fundamental group $\pi$. Furthermore, let $B\pi$ be the classifying space of $\pi$ and $f\colon M\to B\pi$ the classifying map of the universal covering of $M$. Then the following holds:
\begin{enumerate}
\item If $M$ admits a spin structure it carries a pscm if and only if $[M,f]\in{^+}\Omega_n^{Spin}(B\pi)$.
\item If $M$ is orientable and totally non-spin, i.e.~ its universal cover does not admit a spin structure, it carries a pscm if and only if $[M,f]\in{^+}\Omega_n^{SO}(B\pi)$.
\end{enumerate}
\end{thm}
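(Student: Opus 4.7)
The direction ``$M$ admits a pscm $\Rightarrow [M,f] \in {}^+\Omega_n^G(B\pi)$'' is immediate from the definition of ${}^+\Omega_n^G(B\pi)$. For the converse, I would choose a pscm representative $(M',f')$ of $[M,f]$ and a $G$-bordism $(W,F)$ from $(M',f')$ to $(M,f)$ over $B\pi$. The plan is to modify $W$ rel $\partial W$ by finitely many interior surgeries into a new bordism $\widetilde W$ from $M'$ to $M$ admitting a handle decomposition on top of $M'\times\{0\}$ with handles only of index $\leq n-2$. Each such handle amounts to an index-$k$ surgery ($k\leq n-2$) on the current top-level boundary, of codimension $n-k+1\geq 3$, so the Gromov--Lawson surgery lemma from \cite{gl} transfers the pscm across it. Starting from the pscm on $M'$ and traversing the handles in order then yields a pscm on $M$.

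It remains to produce the desired handle decomposition. First, interior $0$- and $1$-surgeries make $W$ connected. Because $f$ classifies the universal cover of $M$, one can force $\pi_1(M)\to\pi_1(W)$ to be an isomorphism via interior $1$- and $2$-surgeries, killing the kernel and cokernel respectively. Dually, a handle of index $k$ built on $M'$ is a handle of index $n+1-k$ built on $M$; hence eliminating $(n-1)$-, $n$- and $(n+1)$-handles on the $M'$-side is equivalent to eliminating $0$-, $1$- and $2$-handles on the $M$-side. The $0$-handles cancel against $1$-handles by connectedness of $W$; the $1$-handles cancel against $2$-handles because their cores are null-homotopic by the $\pi_1$-condition; and the $2$-handles cancel against $3$-handles once $\pi_2(W,M)=0$, which can be arranged by additional interior $3$-surgeries on spherical classes in $W$ that die in $B\pi$. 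Asphericity of $B\pi$ is essential here, since it tells us $F_*\colon \pi_2(W)\to\pi_2(B\pi)$ is the zero map, so every such class is geometrically available for surgery.

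The main obstacle I expect is the $2$-handle elimination on the $M$-side, equivalently the $(n-1)$-handles on the $M'$-side. Algebraic cancellation of these handles is governed by intersection pairings in $\Z[\pi]$, and converting algebraic into geometric cancellation demands the Whitney trick, which requires care since the relevant attaching and belt spheres lie in an $n$-manifold with $n\geq 5$; this is exactly where the dimension hypothesis is used. Once the handle decomposition with indices $\leq n-2$ is in place, a finite induction through the handles using the Gromov--Lawson construction produces the sought-after pscm on $M$, uniformly in the spin and totally non-spin cases, since the surgery lemma is indifferent to the $G$-structure.
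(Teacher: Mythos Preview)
The paper itself does not prove this theorem; it is stated as a known result with citations to \cite{gl} and \cite{rosensto}, preceded only by the remark that one ``can combine the surgery lemma with methods from the proof of the s-cobordism theorem''. Your sketch is therefore an attempt to reconstruct the cited argument, and its overall architecture is the standard one: modify the bordism $W$ so that $M\hookrightarrow W$ is $2$-connected, dualize to get a handle decomposition over $M'$ with handles of index $\le n-2$, and push the pscm across handle by handle via Gromov--Lawson.

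There is, however, a genuine gap in the step where you arrange $\pi_2(W,M)=0$ by ``interior $3$-surgeries on spherical classes in $W$ that die in $B\pi$''. Asphericity of $B\pi$ only tells you that every class in $\pi_2(W)$ maps to zero in $\pi_2(B\pi)$; it does \emph{not} guarantee that an embedded $2$-sphere representing such a class has trivial normal bundle in $W^{n+1}$, which is what you need to perform the surgery. The normal bundle of an embedded $S^2$ in $W$ is detected by $w_2$, and this is precisely where the two cases diverge. In the spin case $w_2(TW)=0$ forces the normal bundle to be trivial and the surgery goes through. In the oriented case the normal bundle may well be nontrivial; the hypothesis \emph{totally non-spin} is exactly what rescues the argument: since the universal cover $\widetilde M$ is not spin, there is a class $\beta\in\pi_2(M)$ on which $w_2$ evaluates nontrivially, and replacing the offending sphere $\alpha$ by $\alpha+\beta$ yields a sphere with trivial normal bundle while leaving the cokernel of $\pi_2(M)\to\pi_2(W)$ unchanged. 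Your final sentence, asserting that the argument is ``uniform in the spin and totally non-spin cases, since the surgery lemma is indifferent to the $G$-structure'', is therefore misleading: the Gromov--Lawson surgery lemma is indeed indifferent to $G$, but the \emph{bordism modification} step is not, and this is where the totally non-spin hypothesis is actually consumed.
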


It is desirable to pass from the bordism groups of $B\pi$ to simpler groups which are easier to compute. In the oriented case we have the well-known map
\[
U\colon\Omega^{SO}_n(X)\to H_n(X;\Z)
\]
which sends an element $[M,f]$ to the image of the fundamental class $[M]\in H_n(M;\Z)$ under the induced map of $f$ in homology. Recall that in stable homotopy theory spectra determine homology theories and vice versa. The corresponding map in the spin case is the Atiyah-Bott-Shapiro orientation \cite{abss}
\[
A\colon\Omega^{Spin} _n(X)\to ko_n(X)
\]
where $ko_n(\_)$ denotes the homology theory associated to the connective cover of the real $K$-theory spectrum $KO$. We set $ko_n^+(X)=A({^+}\Omega_n^{Spin}(X))$ and $H_n^+(X)=U({^+}\Omega_n^{SO}(X))$. Theorem 4.11 in \cite{rosto} states:
\begin{thm}\label{sj}
Under the assumptions of Thm.~ \ref{glr} the following holds:
\begin{enumerate}
\item If $M$ admits a spin structure it carries a pscm if and only if $A([M,f])\in ko_n^+(B\pi)$.
\item If $M$ is orientable and totally non-spin it carries a pscm if and only if $U([M,f])\in H_n^+(B\pi)$.
\end{enumerate}
\end{thm}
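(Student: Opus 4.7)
Both ``only if'' directions are immediate from the definitions of $ko_n^+(B\pi)$ and $H_n^+(B\pi)$. For the converses I would reduce via Theorem~\ref{glr} to the algebraic inclusions
\[
\ker\bigl(A\colon\Omega^{Spin}_n(B\pi)\to ko_n(B\pi)\bigr)\subseteq {^+}\Omega^{Spin}_n(B\pi)
\]
and its analogue for $U$ on classes with totally non-spin representatives. Given $A([M,f])\in ko_n^+(B\pi)$, pick $[N,g]\in{^+}\Omega^{Spin}_n(B\pi)$ with $A([N,g])=A([M,f])$; then $[M,f]-[N,g]$ lies in $\ker A$ and hence, by the inclusion, in ${^+}\Omega^{Spin}_n(B\pi)$. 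Using that ${^+}\Omega^{Spin}_n(B\pi)$ is a subgroup, one concludes $[M,f]\in{^+}\Omega^{Spin}_n(B\pi)$, and Theorem~\ref{glr}(1) then produces a pscm on $M$. The same strategy, applied to $SO$ and $U$, handles part~(2).

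To establish the spin kernel inclusion I plan to work prime-by-prime. The $2$-local statement is exactly what is verified in \cite{rosto} and \cite{mspinsplit}, so the new content is the $\Z[1/2]$-local version. For this I would develop the smooth variant of Baas--Sullivan theory advertised in the title: choose closed spin manifolds $P_1,P_2,\ldots$, each admitting pscm, whose Atiyah--Bott--Shapiro images generate, after inverting $2$, the ideal of coefficient classes one must kill to pass from $\Omega^{Spin}_*$ to $ko_*$. Using the $P_i$ as models for the allowed singularities, define smooth bordism groups $\Omega^{Spin,\mathcal{P}}_*(B\pi)$ of stratified spin manifolds with local product structure $\Sigma\times P_{i_1}\times\cdots\times P_{i_k}$ along each stratum, and identify $\Omega^{Spin,\mathcal{P}}_*(B\pi)\otimes\Z[1/2]$ with $ko_*(B\pi)\otimes\Z[1/2]$ through the map induced by $A$. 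Any element of $\ker A\otimes\Z[1/2]$ then bounds such a singular manifold all of whose strata are products with some $P_{i_j}$ and thus carry pscm by the Gromov--Lawson product trick; an inductive desingularisation, stratum by stratum, applying Gromov--Lawson surgery with prescribed collar metrics, converts the null-bordism into an honest smooth null-bordism with pscm to which Theorem~\ref{glr} applies.

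The crux, and the part I expect to be genuinely difficult, is the smooth geometric implementation. Classical Baas--Sullivan theory manipulates stratified topological or combinatorial cycles, whereas here every bordism must be a smooth spin stratified object carrying a pscm whose local product structure matches the Baas--Sullivan model. Concretely, one must (i) formulate $\Omega^{Spin,\mathcal{P}}_*$ in the smooth category so that its Bockstein-type long exact sequences continue to hold and detect $ko$-theory after inverting $2$, (ii) promote any Baas--Sullivan null-bordism to one carrying pscm with a prescribed product form on collars of each stratum, and (iii) glue pscm metrics across successive desingularisation steps without losing positivity. Step (iii), which requires a parametric Gromov--Lawson surgery compatible with the stratification, is where the geometry interacts most strongly with the singular bordism framework, and I expect it to be the main obstacle. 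The oriented totally non-spin case is formally parallel and technically easier, since ordinary homology has a much smaller coefficient ring to cancel and any simply connected totally non-spin manifold of dimension $\geq 5$ carries pscm by Gromov--Lawson.
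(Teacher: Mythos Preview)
Your reduction to the inclusion $\ker A\subset{^+}\Omega^{Spin}_*(B\pi)$ (and its $SO$ analogue), together with the split into the $2$-local case and the $\Z[\frac{1}{2}]$ case, is exactly how the paper proceeds. The broad plan for $\Z[\frac{1}{2}]$ --- a smooth Baas--Sullivan construction based on pscm generators $P_i$ of the relevant ideal --- is also correct in spirit. Two points, however, deserve correction.

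First, the paper takes the dual route to yours: rather than defining a singular bordism theory $\Omega^{Spin,\mathcal P}_*$ and identifying it with $ko_*$ (the cofiber picture), it introduces the \emph{bordism spanned by $\mathscr P$}, denoted $\mathcal P_*(\_)$, whose cycles are smooth closed manifolds covered by pieces $A_i\cong P_i\times B_i$ with compatible overlap structure. This is shown to model the homotopy \emph{fiber} $\widehat{MSpin}$ of $a\colon MSpin\to ko$, so that $\ker A=\im\bigl(\iota_*\colon\mathcal P_*(X)\to\Omega^{Spin}_*(X)\bigr)$. The two viewpoints are of course equivalent (your singular null-bordism, after excising neighbourhoods of the singular strata, is precisely a smooth bordism from $M$ to such a $\mathscr P$-manifold), but the fiber formulation lets one avoid talking about stratified objects altogether.

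Second, and more substantively, your step~(iii) is mis-stated in a way that cannot be repaired as written. You propose to ``convert the null-bordism into an honest smooth null-bordism with pscm'': but an element of $\ker A$ is typically \emph{not} null-bordant in $\Omega^{Spin}_*$ (e.g.\ $P_i\times B$ for generic closed $B$), so no amount of desingularisation or surgery on the bounding singular manifold can produce a smooth null-bordism of $M$. What excising the singular strata actually yields is a smooth bordism from $M$ to a $\mathscr P$-manifold $N$; it is $N$, not the bordism, that must be shown to carry a pscm. Correspondingly, no Gromov--Lawson surgery enters: the paper constructs a pscm on any $\mathscr P$-manifold directly, by placing scaled product metrics $\epsilon g_i\times h_i$ on each piece $P_i\times B_i'$ and patching them across the overlaps using the elementary fact that $g_i\times h$ and $(\epsilon g_i)\times h$ are isotopic, hence concordant. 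The genuine difficulty you should anticipate is the bookkeeping needed to choose the collar metrics on all the nested intersections $P_I\times Q^j_I$ compatibly, not any parametric surgery argument.
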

To prove this statement one has to show
\begin{thm}\label{mt}
$\ker A\subset {^+}\Omega_*^{Spin}(B\pi)$ and $\ker U\subset {^+}\Omega_*^{SO}(B\pi)$.
\end{thm}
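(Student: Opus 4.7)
The plan is to prove both statements simultaneously by developing a smooth variant of Baas--Sullivan bordism theory, working throughout after inverting $2$. The strategy rests on two ingredients: first, the kernels $\ker A$ and $\ker U$ are, after inverting $2$, ideals in the respective bordism rings that are generated by an explicit family of closed manifolds $Y_1,Y_2,\ldots$ which themselves admit pscm; second, the Baas--Sullivan construction, which classically introduces cone singularities modelled on the $Y_i$, can be reformulated smoothly in a way compatible with the Gromov--Lawson surgery lemma and hence with Theorem~\ref{glr}.

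The first step is to choose the generators. In the oriented case, $\Omega^{SO}_*\otimes\Z[1/2]$ is a polynomial algebra and $\ker U$ is essentially the augmentation ideal; one may take for $Y_i$ the even-dimensional complex projective spaces $\CP^{2i}$, each carrying a Fubini--Study metric of pscm. In the spin case one uses generators realised as total spaces of fibre bundles with fibre $\HP^2$, which admit pscm by the Gromov--Lawson fibre-bundle construction applied to the standard $PSp(3)$-invariant pscm on $\HP^2$. The second step is to build a quotient bordism theory $\Omega_*^G(X;\mathcal{Y})[1/2]$ in which the classes $[Y_i]$ are declared trivial but whose representatives and bordisms remain genuine smooth manifolds equipped with geometric data recording how the generators enter; this is the ``smooth variation'' of Baas--Sullivan theory referred to in the title. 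One then shows that, after inverting $2$, $A$ and $U$ factor through isomorphisms $\Omega_*^{Spin}(X;\mathcal{Y})[1/2]\xrightarrow{\cong}ko_*(X)[1/2]$ and $\Omega_*^{SO}(X;\mathcal{Y})[1/2]\xrightarrow{\cong}H_*(X;\Z)[1/2]$ respectively.

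The third step is the geometric payoff. If $[M,f]\in\ker A$ (respectively $\ker U$), then $[M,f]=0$ in the smooth Baas--Sullivan theory, which by construction means that $M$ is smoothly bordant to a disjoint union of total spaces of fibre bundles with fibres among the $Y_i$; these total spaces admit pscm. The Gromov--Lawson surgery lemma, together with the handle-trading arguments used in the proof of Theorem~\ref{glr} (which replace the bordism by codimension-$\geq 3$ surgeries on $M$), then transports this pscm back to $M$ itself. The main obstacle is the second step: setting up the smooth Baas--Sullivan theory so that the cone neighbourhoods of the classical construction are replaced by fibre-bundle neighbourhoods, while still ensuring that the coefficient computation yields $ko[1/2]$ or $H\Z[1/2]$. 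Verifying this identification without the machinery of spectra-with-singularities, and in a way rigid enough to feed into the surgery lemma, is precisely the task announced in \cite{jung} but never published there, and it constitutes the technical heart of the present paper.
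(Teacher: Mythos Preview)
Your overall strategy matches the paper's: choose pscm generators for the kernel of the orientation map on coefficients (projective spaces and hypersurfaces in the oriented case, $\HP^2$-bundles in the spin case), set up a smooth variant of Baas--Sullivan theory after inverting $2$, and identify it with $ko[\tfrac12]$ resp.\ $H\Z[\tfrac12]$. The paper, however, organises this dually: rather than building the \emph{quotient} theory $\Omega_*^G(X;\mathcal{Y})$ and showing $A$, $U$ factor through it, it constructs the \emph{fiber} theory $\mathcal{P}_*(X)$ whose cycles are closed ``$\mathscr{P}$-manifolds'' (smooth manifolds covered by overlapping pieces $A_i\cong P_i\times B_i$ with nested product structure on the multi-overlaps), and proves $\mathcal{P}\simeq\widehat{MSpin}$ resp.\ $\widehat{MSO}$. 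Thus $\ker A=\im(\mathcal{P}_*(X)\to\Omega_*^G(X))$, and every kernel class is represented by a $\mathscr{P}$-manifold.

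The real gap in your outline is Step~3. You write that $[M,f]=0$ in the Baas--Sullivan theory ``by construction means that $M$ is smoothly bordant to a disjoint union of total spaces of fibre bundles with fibres among the $Y_i$.'' This is not by construction. What a null-bordism in the singular theory hands you directly is an ordinary bordism from $M$ to a $\mathscr{P}$-manifold: a closed manifold glued from product pieces $P_i\times B_i$ along boundaries which are themselves iterated products $P_I\times Q_I$. Turning this into either (a) an honest disjoint union of products $P_i\times Q_i$ with $Q_i$ \emph{closed}, or (b) a pscm on the glued object, is the substantive work. The paper does (b) in its Theorem~3.1 via an inductive concordance argument, shrinking the $P_i$-factors one at a time and using isotopy $\Rightarrow$ concordance to bridge the mismatches on the overlap regions; route (a) also appears, inside the proof that $\iota_*$ is injective on coefficients, and requires the regularity of the sequence $P_1,P_2,\dots$ together with an inductive surgery on the separating hypersurfaces $\partial A_i$. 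Neither is automatic, and your sketch does not indicate which mechanism you intend or why it succeeds.

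Finally, your last sentence invoking the Gromov--Lawson surgery lemma and Theorem~\ref{glr} to ``transport this pscm back to $M$ itself'' is superfluous for Theorem~\ref{mt}: the claim is only that the bordism class $[M,f]$ lies in ${}^+\Omega_*^G(B\pi)$, i.e.\ that \emph{some} representative carries a pscm, not that $M$ does.
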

Such an inclusion of Abelian groups can be shown by proving it \emph{localized} at $2$, i.e.~ after tensoring with
$\Z_{(2)}:=\left\{\frac{a}{b}\in\Q\ |\  b\ \text{prime to}\ 2\right\}$ and after \emph{inverting} $2$, i.e.~ after tensoring with
$\Z \left[\frac{1}{2}\right]$.
\begin{enumerate}
\item $\ker A\ot\Z_{(2)}\subset {^+}\Omega_*^{Spin}(B\pi)\ot\Z_{(2)}$ is proved by Stolz \cite{mspinsplit} using splitting results of $MSpin$-module spectra.
\item $\ker U\ot\Z_{(2)}\subset {^+}\Omega_*^{SO}(B\pi)\ot\Z_{(2)}$ can be deduced from the Atiyah-Hirzebruch spectral sequence (sketched in \cite{rosto}).
\end{enumerate}
After inverting $2$ it is mentioned in \cite{rosto} that there is a proof by Rainer Jung \cite{jung}, for both the spin and the oriented case, based on the Baas-Sullivan theory of bordism with singularities. To the knowledge of the author, experts in this field agree that Jung's proof is probably correct. However, this proof is not available to the public (and in fact unknown to us). Hence one cannot verify its details, it is unclear how much technical effort is needed and generalizations or modifications cannot be carried out. Due to these reasons we shall fill this gap in the literature.

The strategy of our proof of Thm.~ \ref{mt} with $2$ inverted is as follows. Let $MSpin$ resp.~ $MSO$ denote the spin resp.~ oriented Thom spectrum and $H\Z$ the integer Eilenberg-MacLane spectrum. The orientation maps $A$ and $U$ are induced by spectrum maps $a\colon MSpin\to ko$ and $u\colon MSO\to H\Z$. We consider the fibrations
\begin{enumerate}\label{hf}
\item $\widehat{MSpin}\xto{i}MSpin\xto{a}ko$,
\item $\widehat{MSO}\xto{i}MSO\xto{u}H\Z$
\end{enumerate}
where $\widehat{MSpin}$ and $\widehat{MSO}$ denote the homotopy fibers of $a$ and $u$.

All groups and spectra are considered after inverting $2$. One can prove that the kernel of the induced map on coefficients, i.e.~ homotopy groups, $a_*\colon MSpin_*\to ko_*$ (resp.~ $u_*\colon MSO_*\to H\Z_*$) is generated by pscm manifolds, cf.~ \cite[Sec.~ 4]{hpeh} (resp.~ \cite{gl}). By means of these sets of pscm generators we shall give a geometric interpretation of the homology theories associated to $\widehat{MSpin}$ and $\widehat{MSO}$ in terms of smooth manifolds. It turns out that these manifolds also carry a pscm. Since $\ker A=\im (I\colon\widehat{MSpin}_*(X)\to MSpin_*(X))$ (and analogous in the oriented case) this proves Thm.~ \ref{mt} with $2$ inverted.\bigskip

\noindent\textbf{Overview.} Let $\mathscr{P}$ be a family of smooth closed manifolds. In section \ref{bt} of our paper we shall introduce a homology theory $\mathcal{P}_*(\_)$, which we call the \emph{bordism spanned by $\mathscr{P}$}. It is related to Baas-Sullivan theory as follows. Based on ideas of Sullivan \cite{sul} Baas \cite{baas1} introduced homology theories which provide a geometric description of singular homology by means of manifolds with singularities. Removing neighborhoods of these singularities Botvinnik \cite[Ch.~ 1]{bot} obtains a description by manifolds with additional structures on their boundaries. It is said in \cite{bot} that these boundaries theirselves lead to a homology theory, loc.~ cit.~ denoted by $MG_*^{\Sigma\Gamma(1)}(\_)$.

We cannot see obvious smooth structures on the manifolds which are used in the construction of $MG_*^{\Sigma\Gamma(1)}(\_)$. The theory $\mathcal{P}_*(\_)$ shall be a smooth variation of $MG_*^{\Sigma\Gamma(1)}(\_)$. Elements in $\mathcal{P}_*(\_)$ are represented by \emph{smooth} manifolds with additional structure. We directly verify that $\mathcal{P}_*(\_)$ satisfies the Eilenberg-Steenrod axioms. Afterwards we compute the coefficients $\mathcal{P}_*$ in our cases of interest and show how this leads to the description of the homotopy fibers $\widehat{MSpin}$ and $\widehat{MSO}$.

In section \ref{pscm} we shall prove our geometric result that the manifolds used in our description of $\mathcal{P}_*(\_)$ carry a pscm. With respect to constructing pscm, our description, which is based on smooth manifolds right away, seems to be more convenient than classical Baas-Sullivan theory and its further development by Botvinnik.

We note that our treatment of $\mathcal{P}_*(\_)$ is self-contained and can be considered as an alternative approach to Baas-Sullivan theory.\bigskip

\noindent\textbf{Acknowledgement.} I am indebted to my thesis advisor Bernhard Hanke for helpful guidance and continuous encouragement.


\section{A smooth Variation of Baas-Sullivan Theory}\label{bt}

We shall start with some preliminary remarks. Let $\mathbb{H}_i^n:=\{(x_1,\ldots,x_n)\in\mathbb{R}^n\, |\,x_i\geq0\}$. As usual a smooth $n$-dimensional manifold $M$ with boundary is modelled on $\mathbb{H}_n^n$.

We call subsets $N_1,\ldots,N_k$ of $M$ a \emph{transversal family of submanifolds} if for all $1\leq i_1<\ldots<i_l\leq k$ around every point in $N_{i_1}\cap\ldots\cap N_{i_l}$ there exists a chart $\psi\colon U\to\mathbb{H}^n_n$ of $M$ and an injective map $s\colon\{1,\ldots,l\}\to\{1,\ldots,n-1\}$ such that
\[
\psi(U\cap N_{i_j})=\psi(U)\cap\mathbb{H}^n_{s(j)}
\]
simultaneously for all $1\leq j\leq l$.

Let $M$ and $N$ denote smooth manifolds and let $A\subset M$ be a subset. A map $f\colon A\to N$ is called \emph{smooth} if $f$ is the restriction of a smooth map $M\to N$.

Finally, all upcoming manifolds are supposed to be oriented (resp.~ equipped with a spin structure) and we assume that all diffeomorphisms between manifolds preserve the orientation (resp.~ spin structure).
\bigskip

Now let $\mathscr{P}=\{P_1,P_2,\ldots,P_k\}$ be a finite family of smooth closed manifolds. For $I\subset\{1,\ldots,k\}$ we set $P_I:=\prod_{i\in I}P_i$.
\begin{dfn}\label{fdef} An $n$-dimensional \emph{$\mathscr{P}$-manifold} is a tuple $(M,(A_i)_{1\leq i\leq k},(B_I,\phi_I)_{I\subset\{1,\ldots,k\}})$ such that
\begin{itemize}
\item $M$ is a smooth $n$-dimensional manifold.
\item $A_1,\ldots,A_k$ is a transversal family of smooth $n$-dimensional submanifolds, closed as subsets, and the interiors of $A_i$ cover $M$.
\item For all $I\subset\{1,\ldots,k\}$, $B_I$ is a subset of some smooth manifold $C_I$ and $\phi_I$ is a map $A_I:=\cap_{i\in I} A_i\to P_I\x C_I$ which is a diffeomorphism onto $P_I\x B_I$.
\item For $J\subset I$ the map
\[
\phi_J\circ\phi_I^{-1}\colon P_J\x P_{I-J}\x B_I=P_I\x B_I\to P_J\x B_J
\]
is of the form $(x,y)\mapsto (x,\phi_J^I(y))$ where $x\in P_J$, $y\in P_{I-J}\x B_I$ and $\phi_J^I\colon P_{I-J}\x B_I\hto B_J$ is some map.
\end{itemize}
\end{dfn}

We agree that $\mathbb{H}_n^n$ always denotes the model space of the surrounding manifold $M$. Let us call $A_i\subset M$ the \emph{$P_i$-part} of a $\mathscr{P}$-manifold $M$. If all but one $B_i$ are empty we call $M$ a \emph{$P_i$-manifold}.

\begin{dfn}\label{mdd} Let $X$ be a space and $A\subset X$. An $n$-dimensional \emph{$\mathscr{P}$-manifold in $(X,A)$} is a tuple $(M,f,(A_i)_{1\leq i\leq k},(B_I,\phi_I)_{I\subset\{1,\ldots,k\}},(f_i)_{1\leq i\leq k})$ such that
\begin{itemize}
\item $(M,(A_i)_{1\leq i\leq k},(B_I,\phi_I)_{I\subset\{1,\ldots,k\}})$ is a compact $n$-dimensional $\mathscr{P}$-manifold.
\item $f\colon(M,\partial M)\to(X,A)$ and $f_i\colon B_i\to X$ are continuous maps such that the diagram
\[
\xymatrix{
A_i\ar[r]^-f\ar[d]^-{\phi_i}  & X\\
P_i\x B_i\ar[r]^-{pr}      & B_i.\ar[u]_-{f_i}
}
\]
commutes for all $i$.
\end{itemize}
\end{dfn}
In the sequel we fix a family $\mathscr{P}=\{P_1,P_2,\ldots,P_k\}$ of smooth closed manifolds and write $(M,f,A_i,B_I,\phi_I,f_i)$, $(M,f,A_i)$ or $(M,f)$ short for a $\mathscr{P}$-manifold in $(X,A)$.

\begin{dfn}\label{pbor}
An $n$-dimensional $\mathscr{P}$-manifold $(M,F,A_i,B_I,\phi_I,f_i)$ in $(X,A)$ is said to \emph{$\mathscr{P}$-bord} if there exists a tuple $(\hat{M},\hat{f} ,\hat{A},\hat{B}_I,\hat{\phi}_I,\hat{f}_i)$ such that
\begin{itemize}
\item $(\hat{M},\hat{A}_i,\hat{B}_I,\hat{\phi}_I)$ is a compact $(n+1)$-dimensional $\mathscr{P}$-manifold.
\item $M\subset\partial\hat{M}$ and for all $i$ one has $\hat{A}_i\cap M=A_i$. In addition, each
\[
\hat{\phi}_i\circ\phi_i^{-1}\colon P_i\x B_i\to P_i\x \hat{B}_i
\]
is of the form $(x,y)\mapsto(x,\omega_i(y))$ for some map $\omega_i$.
\item $\hat{F}\colon\hat{M}\to X$ and $\hat{f}_i\colon\hat{B}_i\to X$ are continuous maps such that $\hat{F}(\partial\hat{M}-M)\subset A$, $\hat{f}|_M=f$ and the diagram
\[
\xymatrix{
\hat{A}_i\ar[r]^-{\hat{F}}\ar[d]^-{\hat{\phi}_i}  & X\\
P_i\x \hat{B}_i\ar[r]^-{pr}      & \hat{B}_i.\ar[u]_-{\hat{f}_i}
}
\]
commutes for all $i$.
\end{itemize}
\end{dfn}

One continues as in ordinary bordism homology. The disjoint union of two $\mathscr{P}$-manifolds in $(X,A)$ is again a $\mathscr{P}$-manifold. We say that two $n$-dimensional $\mathscr{P}$-manifolds $(M,f)$ and $(N,g)$ in $(X,A)$ are \emph{$\mathscr{P}$-bordant} if $(M,f)\, \dot{\cup}\, (-N,g)$ $\mathscr{P}$-bords.

\begin{lem}\label{er} $\mathscr{P}$-bordism defines an equivalence relation. \end{lem}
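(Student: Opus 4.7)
The plan is to verify reflexivity, symmetry, and transitivity of $\mathscr{P}$-bordism separately, with transitivity carrying the main content.

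For reflexivity of $(M,f,A_i,B_I,\phi_I,f_i)$, I would take the cylinder $\hat M := M\times[0,1]$ equipped with $\hat A_i := A_i\times[0,1]$, $\hat B_I := B_I\times[0,1]$, $\hat\phi_I := \phi_I\times\id_{[0,1]}$, $\hat F(m,t):=f(m)$, and $\hat f_i(b,t):=f_i(b)$. After rounding the corners where $\partial M\times\{0,1\}$ meets $\partial M\times[0,1]$ so that $\hat M$ is modeled on $\mathbb H^{n+1}_{n+1}$, one checks directly that this exhibits $(M,f)\, \dot\cup\, (-M,f)$ as $\mathscr P$-bording. Symmetry is immediate: if $\hat M$ bords $(M,f)\, \dot\cup\, (-N,g)$, then $-\hat M$ (with orientation, resp.~ spin structure, reversed, and the same $\hat A_i,\hat B_I,\hat\phi_I,\hat f_i$) bords $(N,g)\, \dot\cup\, (-M,f)$.

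For transitivity, suppose $\hat M_1$ bords $(M_1,f_1)\, \dot\cup\, (-M_2,f_2)$ and $\hat M_2$ bords $(M_2,f_2)\, \dot\cup\, (-M_3,f_3)$. I would glue $\hat M_1$ and $\hat M_2$ along $M_2$ via collar neighborhoods to obtain a smooth manifold $\hat M$. The crucial observation is that the compatibility condition in Definition~\ref{pbor} forces each $\hat\phi_i^{(j)}\circ\phi_i^{-1}$ to have product form $(x,y)\mapsto(x,\omega_i^{(j)}(y))$; equivalently, near $M_2$ the $P_i$-part $\hat A_i^{(j)}$ is of the form $A_i\times[0,\eg_j)$ on which $\hat\phi_i^{(j)}$ restricts to $\phi_i\times\omega_i^{(j)}$. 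Choosing collars of $M_2$ in $\hat M_j$ induced from these product charts, we may glue $\hat A_i^{(1)}$ to $\hat A_i^{(2)}$ along $A_i$ to obtain smooth transversal submanifolds $\hat A_i\subset\hat M$, and glue $\hat B_I^{(1)}$ to $\hat B_I^{(2)}$ along $B_I$ (via the $\omega_I^{(j)}$) to obtain $\hat B_I$. The diffeomorphisms $\hat\phi_I^{(j)}$ then glue to diffeomorphisms $\hat\phi_I\colon\hat A_I\to P_I\x\hat B_I$, which still satisfy the compatibility for $J\subset I$ since this is a local condition that holds on each piece. The continuous maps $\hat F_j$ and $\hat f_i^{(j)}$ agree on the gluing locus by hypothesis, so they glue to continuous maps, finishing the bordism from $(M_1,f_1)$ to $(M_3,f_3)$.

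The main obstacle is transitivity: one must glue the ambient manifolds $\hat M_j$, the $P_i$-parts $\hat A_i^{(j)}$, and the base spaces $\hat B_I^{(j)}$ simultaneously and verify that product structures, transversality of the family $\{\hat A_i\}$, and the nested compatibilities on the $\hat\phi_I$ all survive the gluing. The product form of $\hat\phi_i\circ\phi_i^{-1}$ prescribed in Definition~\ref{pbor} is precisely the structural input needed to pick collars that are simultaneously compatible with every stratum, so that the gluing is coherent across all the $A_i$ and all subsets $I$ at once.
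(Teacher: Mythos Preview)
Your proposal is correct and follows essentially the same route as the paper: cylinder with corner straightening for reflexivity, orientation reversal for symmetry, and for transitivity gluing the two bordisms along the common $\mathscr P$-manifold, using the product-form condition on $\hat\phi_i\circ\phi_i^{-1}$ from Definition~\ref{pbor} to identify the $B_I$'s on both sides and to ensure that the glued $A_i$'s and $\phi_I$'s remain compatible. The paper phrases the gluing via transversality charts rather than collars, but the content is the same.
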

\begin{proof}
Let $(M,f,A_i,B_I,\phi_I,f_i)$ be $\mathscr{P}$-manifold in $(X,A)$. For the proof of reflexivity we consider
\begin{equation}\label{ref}
(M\x[0,1],f\circ pr,A_i\x[0,1],B_I\x[0,1],\phi_I\circ pr,f_i\circ pr).
\end{equation}
By 'straightening the angle' \cite[p.~ 8]{conner}, $M\x[0,1]$ can be given a differentiable structure. By doing so, $A_1\x[0,1],\ldots, A_k\x[0,1]$ becomes a transversal family of submanifolds. There is an induced straightening of $B_I$ for all $I$ and thus \ref{ref} becomes a $\mathscr{P}$-bordism between $(M,f)$ and itself.

The symmetry relation is obvious.

To prove transitivity let $(M,f)$ and $(N,g)$ resp.~ $(N,g)$ and $(O,h)$ be $\mathscr{P}$-bordant $n$-dimensional $\mathscr{P}$-manifolds in $(X,A)$, say via $(V,F,A_i,B_I,\phi_I,f_i)$ resp.~ $(W,G,C_i,D_I,\psi_I,g_i)$. Because of transversality one finds charts of $V$ around $A_I\cap\partial V$ and of $W$ around $C_I\cap\partial W$ in which the respective inner boundaries $\overline{\partial A_i-\partial V}$ and $\overline{\partial C_i-\partial W}$ of the $P_i$-parts lie on a common $\partial\mathbb{H}_j^{n+1}$ for some $j\leq n$ depending on $i$. Hence, for all $i$ we can glue $A_i$ and $C_i$ along $(A_i\cap\partial V)|_N\cong (C_i\cap\partial W)|_N$ such that $A_1\cup C_1,\ldots, A_k\cup C_k$ becomes a transversal family of submanifolds of the resulting smooth manifold $V\cup W$. Let the $P_i$-part of $N$ be diffeomorphic to $P_i\x E_i$. By means of point two of Def. \ref{pbor} one recovers $E_i$ as a submanifold of $B_i$ and $D_i$. Thus, for all $i$ we can also glue $B_i$ and $D_i$ along $E_i$. One obtains an induces bonding of $B_I$ and $D_I$ for all $I$. Now the desired $\mathscr{P}$-bordism between $(M,f)$ and $(O,h)$ is given by
\[
(V\cup W, F\cup G, A_i\cup C_i, B_I\cup D_I, \phi_I\cup\psi_I,f_i\cup g_i).
\]
\end{proof}

Denote by $\mathcal{P}_n(X,A)$ the set of all $\mathscr{P}$-bordism classes of $n$-dimensional $\mathscr{P}$-manifolds in $(X,A)$. Via disjoint union it becomes an Abelian group with zero element the $\mathscr{P}$-bordism class which $\mathscr{P}$-bords. A map $g\colon (X,A)\to (Y,B)$ induces a group homomorphism $\mathcal{P}_n(X,A)\to \mathcal{P}_n(Y,B)$ by $[(M,f)]\mapsto[(M,g\circ f)]$. If $(M,f)$ is a $\mathscr{P}$-manifold in $(X,A)$ then the boundary of $M$ becomes a $\mathscr{P}$-manifold in $A$ by restriction. It is denoted by $\partial(M,f)$. Then we have an induced map $\partial\colon\mathcal{P}_n(X,A)\to\mathcal{P}_{n-1} (A,\emptyset)$ defined by $[(M,f)]\mapsto [\partial(M,f)]$.

\begin{prop}\label{homt}  The \emph{bordism spanned by $\mathscr{P}$}
\[
\mathcal{P}_*(X,A):=\bigoplus_{n\geq0}\mathcal{P}_n(X,A)
\]
is a homology theory. \end{prop}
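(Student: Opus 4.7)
The plan is to verify directly that $\mathcal{P}_*$ satisfies the Eilenberg-Steenrod axioms, following the template used for ordinary smooth bordism (as in Conner-Floyd) and paying attention to the extra $\mathscr{P}$-structure at each step. Functoriality of the induced maps $g_*$ and naturality of $\partial$ with respect to maps of pairs are immediate from the definitions given just before the statement, since post-composing the reference map $f$ (and the boundary maps $f_i$) with $g$ leaves the bordism relation intact, as does restriction to a subspace. The disjoint-union/additivity axiom is automatic because representatives and bordisms are literally closed under disjoint union.

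For homotopy invariance, suppose $g_0, g_1 \colon (X,A)\to(Y,B)$ are homotopic via $H$. Given a $\mathscr{P}$-manifold $(M,f,A_i,B_I,\phi_I,f_i)$ in $(X,A)$, I would equip $M\times[0,1]$ with the product $\mathscr{P}$-structure already constructed in the reflexivity part of Lemma \ref{er}, and combine $H$ with $f\times\id_{[0,1]}$ to obtain a $\mathscr{P}$-bordism in $(Y,B)$ between $(M,g_0\circ f)$ and $(M,g_1\circ f)$. This yields $g_{0*}=g_{1*}$.

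The long exact sequence of a pair is obtained from the inclusions $j\colon(A,\emptyset)\hookrightarrow(X,\emptyset)$, $i\colon(X,\emptyset)\hookrightarrow(X,A)$ together with $\partial$. Exactness at $\mathcal{P}_n(X,A)$ follows from the observation that if $\partial(M,f)=0$ in $\mathcal{P}_{n-1}(A)$ via a $\mathscr{P}$-manifold $(V,F)$, then gluing $V$ onto $M$ along $\partial M$, exactly as in the transitivity argument of Lemma \ref{er}, produces a closed $\mathscr{P}$-manifold representing a preimage under $i_*$. Exactness at $\mathcal{P}_{n-1}(A)$ reduces to the tautology that the restriction to $A$ of a $\mathscr{P}$-manifold in $(X,A)$ bounds in $X$. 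Exactness at $\mathcal{P}_n(X)$ is analogous: a class vanishing under $i_*$ is witnessed by a $\mathscr{P}$-bordism in $(X,A)$ whose upper boundary lies in $A$ and therefore represents a preimage under $j_*$.

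The main obstacle is excision. Given $U\subset A$ with $\overline{U}\subset\text{int}\,A$, one must show that $\mathcal{P}_n(X-U,A-U)\to\mathcal{P}_n(X,A)$ is an isomorphism. As in ordinary bordism, both surjectivity and injectivity reduce to pushing representatives off $U$ along a collar of $\partial M$. The technical point specific to our setting is that a \emph{single} inward-pointing collar must be chosen which is simultaneously tangent to every $A_i$ near $\partial M$ and compatible with the product decompositions $\phi_I\colon A_I\to P_I\times B_I$. Such a collar exists by Definition \ref{fdef}: the transversality chart condition gives local collars whose direction lies in the half-space complementary to the coordinate hyperplanes cut out by the $A_i$, and these can be patched via a partition of unity subordinate to a chart cover, yielding a $\mathscr{P}$-collaring theorem. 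Once this collaring is available, the classical Conner-Floyd excision argument goes through verbatim, and the proposition follows.
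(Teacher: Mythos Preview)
Your treatment of homotopy invariance and the long exact sequence is correct and essentially matches the paper's proof (up to swapping the names of the inclusions $i$ and $j$). The gap is in excision.

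In Conner's argument, surjectivity (and likewise injectivity) is \emph{not} obtained by ``pushing off $U$ along a collar of $\partial M$''. The preimage $f^{-1}(\overline U)$ need not lie anywhere near $\partial M$. Rather, one constructs a codimension-$0$ submanifold $N\subset M$ with $f^{-1}(X\setminus\mathring A)\subset N$ and $N\cap f^{-1}(\overline U)=\emptyset$, typically by choosing a regular value of a smooth separating function. A $\mathscr{P}$-compatible collar of $\partial M$ is of no help here; the new boundary $\partial N$ lies in the interior of $M$.

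The genuine difficulty, which your sketch does not address, is to arrange that this separating submanifold $N$ respects the local product structures: one needs $\phi_I(A_I\cap N)=P_I\times C_I$ for suitable $C_I\subset B_I$, so that $N$ inherits a $\mathscr{P}$-structure. An arbitrary regular-value slice will cut across the $P_I$-fibers in an uncontrolled way. The paper handles this by an iterative procedure: starting from a naive $N_0$, one \emph{saturates along the $P_i$-fibers} (replacing $N_0\cap A_i$ by the union of all $P_i$-fibers it meets), which restores the product structure but destroys smoothness along the inner boundaries $\partial A_i$; one then smooths inside a small neighborhood of $\bigcup_i\partial A_i$ and repeats, at each stage replacing the $A_i$ by slightly shrunken versions $A_i^{t/k}$. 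After $k$ iterations the locus of possible non-smooth points is a $k$-fold intersection of boundaries of distinct $A_i^{t/k}$'s, which is empty. This saturation-and-shrink iteration is the heart of the excision proof and is precisely the idea your proposal is missing.
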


\begin{proof} We have to show that $\mathcal{P}_*(X,A)$ satisfies the Eilenberg-Steenrod axioms. One proceeds in the same way as in the case of ordinary bordism homology \cite[p.~ 11-13]{conner} and additionally takes the local product structures into account. However, the proof of excision requires special attention. For the sake of completeness we shall verify all axioms.

Let $i\colon A\hto X$ and $j\colon(X,\emptyset)\hto (X,A)$ denote the inclusions. Obviously $\mathcal{P}_*(\_)$ is a functor from the category of pairs of topological spaces (with continuous maps as morphisms) to the category of Abelian groups. It remains to show:

\textbf{Homotopy axiom.} Let $g,\,  h\colon (X,A)\to (Y,B)$ be homotopic via $H\colon(X,A)\x[0,1]\to(Y,B)$. Then $g_*=h_*\colon \mathcal{P}_n(X,A)\to\mathcal{P}_n(Y,B)$.

Let $(M,f,A_i,B_I,\phi_I,f_i)$ be a $\mathscr{P}$-manifold in $(X,A)$. We define
\[
G\colon M\x[0,1]\to Y, (x,t)\mapsto H(f(x),t).
\]
By straightening the angle $M\x[0,1]$ can be equipped with the structure of a $\mathscr{P}$-manifold. Then
\[
(M\x[0,1],G,A_i\x[0,1],B_I\x[0,1],\phi_I\x\id,H\circ(f_i\x\id))
\]
becomes a $\mathscr{P}$-bordism between $(M,gf)$ and $(M,hf)$.

\textbf{Exactness axiom.} The sequence
\[
\ldots\xto{\partial}\mathcal{P}_{n} (A)\xto{i_*}\mathcal{P}_n(X)\xto{j_*}\mathcal{P}_n(X,A)\xto{\partial}\mathcal{P}_{n-1} (A)\xto{i_*}\ldots
\]
is exact.

It is clear that $\partial j_*=0$ and $i_* \partial=0$. Let $[(M,f)]\in\mathcal{P}_n(A)$, a zero $\mathscr{P}$-bordism for $(M,j\, i\, f)$ is given by $(M\x[0,1],j\, i\, f\, pr)$, hence $j_*i_*=0$.

Let $[(M,f,A_i,B_I,\phi_I,f_i)]\in\mathcal{P}_n(X,A)$ be in the kernel of $\partial$. Then $\partial(M,f)$ bords in $A$, i.e.~ there exists a zero $\mathscr{P}$-bordism $(W,g,C_i,D_I,\psi_I,g_i)$ for $\partial(M,F)$ in $A$. As in the proof of transitivity in Lemma \ref{er} we can glue $A_i$ and $C_i$ along $A_i\cap\partial M\cong C_i\cap\partial W$ for all $i$ to obtain a closed $\mathscr{P}$-manifold $N$ and a map $(f\cup g)\colon N\to X$.
Now $(N\x[0,1], (f\cup g)\circ pr)$ is a $\mathscr{P}$-bordism between $(N,j\circ(f\cup g))$ and $(M,f)$ in $(X,A)$. The cases $\ker j_*\subset\im i_*$ and $\ker i_*\subset\im\partial$ are obvious.

\textbf{Excision axiom.} Let $U$ be an open subset of $X$ such that $\overline{U}\subset\mathring{A}$, then the inclusion $i\colon(X-U,A-U)\hto (X,A)$ induces an isomorphism
\[
i_*\colon\mathcal{P}_n(X-U,A-U)\xto{\cong}\mathcal{P}_n(X,A).
\]

First we show that $i_*$ is epic: Let $(M,f,A_i,B_I,\phi_I,f_i)$ be a $\mathscr{P}$-manifold in $(X,A)$. We are looking for a smooth submanifold $N\subset M$ such that $f^{-1}(X-\mathring{A})\subset N$ and $f^{-1}(\overline{U})\cap N=\emptyset$. In addition, $N$ shall respect the local product structures in the sense that $\phi_I(A_I\cap N)=P_I\x C_I$ for some $C_I\subset B_I$. Then it follows that $N$ inherits a $\mathscr{P}$-structure of $M$. Now $(N,F|_N)$ defines an element in $\mathcal{P}_n(X-U,A-U)$ and
\[
(M\x[0,1],f\circ pr, A_i\x[0,1],B_I\x[0,1],\phi_I\x\id, f_i\circ pr)
\]
is a $\mathscr{P}$-bordism between $(N,i\circ f|_N)$ and $(M,f)$ in $(X,A)$.

The construction of $N$ requires a preliminary observation. Until the end of this proof we shall denote the 'inner' boundary $\overline{\partial A_i-\partial M}$ by $\partial A_i$, likewise for comparable sets. If $\partial A_i\x[0,1]$ is a collar neighborhood for $\partial A_i\subset A_i$, say $\partial A_i\x\{0\}=\partial A_i$, we set
\[
A_i^{t}:=A_i-(\partial A_i\x[0,t))
\]
for all $0\leq t\leq1$ (see figure \ref{aus}). Now one observes that there exists collar neighborhoods $\partial A_i\x[0,1]$ for all $i$ such that for arbitrary sequences of numbers $0\leq t_1,\ldots,t_k\leq 1$ and by suitable restrictions $(M,f,A_i^{t_i})$ become $\mathscr{P}$-bordant $\mathscr{P}$-manifolds in $(X,A)$.

Now let us construct $N$. We set $Q:=f^{-1}(X-\mathring{A})$ and $R:=f_I^{-1}(\overline{U})$. One can show \cite[Lemma 3.1]{conner} that there exists an $n$-dimensional submanifold $N_0\subset M$, closed as a subset, such that $Q\subset N_0$ and $R\cap N_0=\emptyset$. Clearly, $N_0$ does not have to respect the local product structures. Therefore we shall modify $N_0$ as follows. Set $B'_i:=pr_{B_i}(\phi_i(N_0\cap A_i))$ and consider the \emph{saturation of the $P_i$-fibers}
\[
N_1:=\bigcup_{i=1}^k\left(\bigcup_{b\in B'_i} \phi_i^{-1}\left(P_i\x\{b\}\right)\right).
\]
Due to the condition $\cap_{i\in I}A_i\cong P_I\x B_I$ this $N_1$ respects the local product structures. In addition, as $f$ locally factors over $B_I$ one concludes $f(N_0)=f(N_1)$. Now $N_1$ is the union of manifolds modelled on $\mathbb{H}_1\cap\mathbb{H}_2$. The non-smooth points of $N_1$ only occur on $C_1:=\cup_{i=1}^k\partial A_i$. With respect to the metric induced by the collar neighborhoods let $U_1$ be an open $1/k$-neighborhood of $C_1$. One finds a smooth submanifold $N'_1\subset M$ such that $N'_1-U_1=N_1-U_1$. In view of continuity $N'_1$ can be chosen such that any longer $Q\subset N'_1$ and $R\cap N'_1=\emptyset$. Note that $N'_1$ respects the local product structures except on $U_1$.

Now we replace $A_i$ by $A^{1/k}_i$ for all $i$ and repeat the above procedure. The saturations of the $P_i$-fibers with respect to the $A^{1/k}_i$'s yield an $N_2\subset M$. One merely has to perform this saturation step on $U_1$ and, again, non-smooth points only appear on $\cup_{i=1}^k\partial A^{1/k} _i$. Hence the non-smooth points of $N_2$ occur on an open $1/k$-neighborhood $U_2$ of
\begin{align*}
C_2:&=C_1\cap\left(\bigcup_{i=1}^k\partial A^{1/k}_i\right)\\
    &=\bigcup_{1\leq i,j\leq k}\left(\partial A_i \cap\partial A^{1/k}_j\right).
\end{align*}
Similar as above there is a smooth submanifold $N'_2\subset M$ which satisfies $N'_2-U_2=N_2-U_2$ and $Q\subset N'_2$, $R\cap N'_2=\emptyset$.

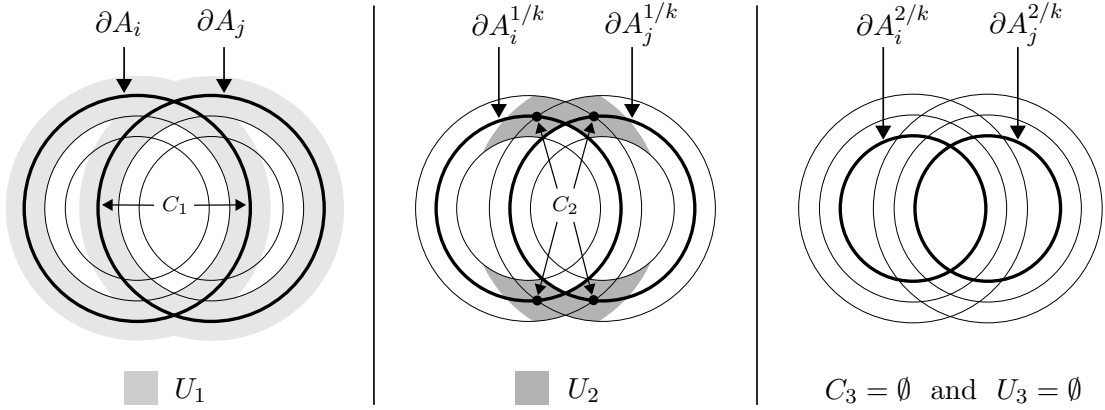
\begin{figure}
\psset{xunit=.63pt,yunit=.63pt,runit=.63pt}
\begin{pspicture}(71,700)(665,980)
{
\newrgbcolor{curcolor}{0.90196079 0.90196079 0.90196079}
\pscustom[linestyle=none,fillstyle=solid,fillcolor=curcolor]
{
\newpath
\moveto(154.78086,753.53810262)
\curveto(127.08054,754.46271262)(101.79615,769.95422262)(88.038939,794.43026262)
\curveto(82.875379,803.61696262)(79.445769,814.62767262)(78.382049,825.43359262)
\curveto(78.034209,828.96708262)(78.034209,837.11119262)(78.382049,840.64468262)
\curveto(81.924589,876.63174262)(109.22994,905.77919262)(144.8056,911.54935262)
\curveto(151.37007,912.61407262)(160.0071,912.83113262)(166.39762,912.09199262)
\curveto(200.76602,908.11688262)(228.54932,882.76738262)(235.46452,849.07530262)
\curveto(236.63308,843.38188262)(237.03296,839.29334262)(237.03296,833.03913262)
\curveto(237.03296,826.78492262)(236.63308,822.69639262)(235.46452,817.00296262)
\curveto(232.32671,801.71501262)(224.90492,787.97866262)(213.76208,776.83583262)
\curveto(198.09178,761.16553262)(176.91181,752.79939262)(154.78086,753.53810262)
\closepath
\moveto(162.10447,777.61166262)
\curveto(193.00885,780.46817262)(215.58866,807.87853262)(212.47462,838.75762262)
\curveto(210.45217,858.81246262)(197.56714,876.37945262)(179.09278,884.26926262)
\curveto(171.92322,887.33114262)(165.03301,888.72379262)(157.0537,888.72379262)
\curveto(147.32095,888.72379262)(138.33706,886.44016262)(130.09014,881.86988262)
\curveto(104.10365,867.46869262)(93.973889,835.19787262)(107.06295,808.51121262)
\curveto(117.21383,787.81507262)(139.16597,775.49144262)(162.10447,777.61166262)
\closepath
}
}
{
\newrgbcolor{curcolor}{0.90196079 0.90196079 0.90196079}
\pscustom[linestyle=none,fillstyle=solid,fillcolor=curcolor]
{
\newpath
\moveto(110.73287,753.72607262)
\curveto(83.032554,754.65068262)(57.748164,770.14219262)(43.990954,794.61823262)
\curveto(38.827394,803.80493262)(35.397784,814.81564262)(34.334064,825.62156262)
\curveto(33.986224,829.15505262)(33.986224,837.29916262)(34.334064,840.83265262)
\curveto(37.876604,876.81971262)(65.181954,905.96716262)(100.75761,911.73732262)
\curveto(107.32208,912.80204262)(115.95911,913.01910262)(122.34963,912.27996262)
\curveto(156.71803,908.30485262)(184.50133,882.95535262)(191.41653,849.26328262)
\curveto(192.58509,843.56985262)(192.98497,839.48131262)(192.98497,833.22710262)
\curveto(192.98497,826.97289262)(192.58509,822.88436262)(191.41653,817.19093262)
\curveto(188.27872,801.90298262)(180.85693,788.16663262)(169.71409,777.02380262)
\curveto(154.04379,761.35350262)(132.86382,752.98736262)(110.73287,753.72607262)
\closepath
\moveto(118.05648,777.79963262)
\curveto(148.96086,780.65614262)(171.54067,808.06650262)(168.42663,838.94559262)
\curveto(166.40418,859.00043262)(153.51915,876.56742262)(135.04479,884.45723262)
\curveto(127.87523,887.51911262)(120.98502,888.91176262)(113.00571,888.91176262)
\curveto(103.27296,888.91176262)(94.289074,886.62813262)(86.042154,882.05785262)
\curveto(60.055664,867.65666262)(49.925904,835.38584262)(63.014964,808.69918262)
\curveto(73.165844,788.00304262)(95.117984,775.67941262)(118.05648,777.79963262)
\closepath
}
}
{
\newrgbcolor{curcolor}{0 0 0}
\pscustom[linewidth=1,linecolor=curcolor]
{
\newpath
\moveto(255,954.99999362)
\lineto(255,715.00000262)
}
}
{
\newrgbcolor{curcolor}{0 0 0}
\pscustom[linewidth=2.00000014,linecolor=curcolor]
{
\newpath
\moveto(180.800195,833.04979933)
\curveto(180.800195,795.54974904)(150.40039779,765.14995183)(112.9003475,765.14995183)
\curveto(75.40029721,765.14995183)(45.0005,795.54974904)(45.0005,833.04979933)
\curveto(45.0005,870.54984962)(75.40029721,900.94964683)(112.9003475,900.94964683)
\curveto(150.40039779,900.94964683)(180.800195,870.54984962)(180.800195,833.04979933)
\closepath
}
}
{
\newrgbcolor{curcolor}{0 0 0}
\pscustom[linewidth=0.5,linecolor=curcolor]
{
\newpath
\moveto(168.454635,833.04969614)
\curveto(168.454635,802.36781096)(143.58205269,777.49522864)(112.9001675,777.49522864)
\curveto(82.21828231,777.49522864)(57.3457,802.36781096)(57.3457,833.04969614)
\curveto(57.3457,863.73158133)(82.21828231,888.60416364)(112.9001675,888.60416364)
\curveto(143.58205269,888.60416364)(168.454635,863.73158133)(168.454635,833.04969614)
\closepath
}
}
{
\newrgbcolor{curcolor}{0 0 0}
\pscustom[linewidth=0.50000007,linecolor=curcolor]
{
\newpath
\moveto(156.109265,833.04969993)
\curveto(156.109265,809.18601022)(136.7639222,789.84066743)(112.9002325,789.84066743)
\curveto(89.0365428,789.84066743)(69.6912,809.18601022)(69.6912,833.04969993)
\curveto(69.6912,856.91338963)(89.0365428,876.25873243)(112.9002325,876.25873243)
\curveto(136.7639222,876.25873243)(156.109265,856.91338963)(156.109265,833.04969993)
\closepath
}
}
{
\newrgbcolor{curcolor}{0 0 0}
\pscustom[linewidth=2.00000014,linecolor=curcolor]
{
\newpath
\moveto(225.131495,833.04979933)
\curveto(225.131495,795.54974904)(194.73169779,765.14995183)(157.2316475,765.14995183)
\curveto(119.73159721,765.14995183)(89.3318,795.54974904)(89.3318,833.04979933)
\curveto(89.3318,870.54984962)(119.73159721,900.94964683)(157.2316475,900.94964683)
\curveto(194.73169779,900.94964683)(225.131495,870.54984962)(225.131495,833.04979933)
\closepath
}
}
{
\newrgbcolor{curcolor}{0 0 0}
\pscustom[linewidth=0.5,linecolor=curcolor]
{
\newpath
\moveto(212.786035,833.04959614)
\curveto(212.786035,802.36771096)(187.91345269,777.49512864)(157.2315675,777.49512864)
\curveto(126.54968231,777.49512864)(101.6771,802.36771096)(101.6771,833.04959614)
\curveto(101.6771,863.73148133)(126.54968231,888.60406364)(157.2315675,888.60406364)
\curveto(187.91345269,888.60406364)(212.786035,863.73148133)(212.786035,833.04959614)
\closepath
}
}
{
\newrgbcolor{curcolor}{0 0 0}
\pscustom[linewidth=0.50000007,linecolor=curcolor]
{
\newpath
\moveto(200.440565,833.04969993)
\curveto(200.440565,809.18601022)(181.0952222,789.84066743)(157.2315325,789.84066743)
\curveto(133.3678428,789.84066743)(114.0225,809.18601022)(114.0225,833.04969993)
\curveto(114.0225,856.91338963)(133.3678428,876.25873243)(157.2315325,876.25873243)
\curveto(181.0952222,876.25873243)(200.440565,856.91338963)(200.440565,833.04969993)
\closepath
}
}
{
\newrgbcolor{curcolor}{0 0 0}
\pscustom[linewidth=0.50000006,linecolor=curcolor]
{
\newpath
\moveto(647.384345,832.99992503)
\curveto(647.384345,795.06231334)(616.62983419,764.30780253)(578.6922225,764.30780253)
\curveto(540.75461081,764.30780253)(510.0001,795.06231334)(510.0001,832.99992503)
\curveto(510.0001,870.93753672)(540.75461081,901.69204753)(578.6922225,901.69204753)
\curveto(616.62983419,901.69204753)(647.384345,870.93753672)(647.384345,832.99992503)
\closepath
}
}
{
\newrgbcolor{curcolor}{0 0 0}
\pscustom[linewidth=0.50000004,linecolor=curcolor]
{
\newpath
\moveto(634.894685,832.99978081)
\curveto(634.894685,801.95989084)(609.73188247,776.79708831)(578.6919925,776.79708831)
\curveto(547.65210253,776.79708831)(522.4893,801.95989084)(522.4893,832.99978081)
\curveto(522.4893,864.03967078)(547.65210253,889.20247331)(578.6919925,889.20247331)
\curveto(609.73188247,889.20247331)(634.894685,864.03967078)(634.894685,832.99978081)
\closepath
}
}
{
\newrgbcolor{curcolor}{0 0 0}
\pscustom[linewidth=2.00000006,linecolor=curcolor]
{
\newpath
\moveto(622.405215,832.99984356)
\curveto(622.405215,808.85770569)(602.83414537,789.28663606)(578.6920075,789.28663606)
\curveto(554.54986963,789.28663606)(534.9788,808.85770569)(534.9788,832.99984356)
\curveto(534.9788,857.14198143)(554.54986963,876.71305106)(578.6920075,876.71305106)
\curveto(602.83414537,876.71305106)(622.405215,857.14198143)(622.405215,832.99984356)
\closepath
}
}
{
\newrgbcolor{curcolor}{0 0 0}
\pscustom[linewidth=0.50000006,linecolor=curcolor]
{
\newpath
\moveto(692.232945,832.99992503)
\curveto(692.232945,795.06231334)(661.47843419,764.30780253)(623.5408225,764.30780253)
\curveto(585.60321081,764.30780253)(554.8487,795.06231334)(554.8487,832.99992503)
\curveto(554.8487,870.93753672)(585.60321081,901.69204753)(623.5408225,901.69204753)
\curveto(661.47843419,901.69204753)(692.232945,870.93753672)(692.232945,832.99992503)
\closepath
}
}
{
\newrgbcolor{curcolor}{0 0 0}
\pscustom[linewidth=0.50000004,linecolor=curcolor]
{
\newpath
\moveto(679.743285,832.99978081)
\curveto(679.743285,801.95989084)(654.58048247,776.79708831)(623.5405925,776.79708831)
\curveto(592.50070253,776.79708831)(567.3379,801.95989084)(567.3379,832.99978081)
\curveto(567.3379,864.03967078)(592.50070253,889.20247331)(623.5405925,889.20247331)
\curveto(654.58048247,889.20247331)(679.743285,864.03967078)(679.743285,832.99978081)
\closepath
}
}
{
\newrgbcolor{curcolor}{0 0 0}
\pscustom[linewidth=2.00000006,linecolor=curcolor]
{
\newpath
\moveto(667.253815,832.99984356)
\curveto(667.253815,808.85770569)(647.68274537,789.28663606)(623.5406075,789.28663606)
\curveto(599.39846963,789.28663606)(579.8274,808.85770569)(579.8274,832.99984356)
\curveto(579.8274,857.14198143)(599.39846963,876.71305106)(623.5406075,876.71305106)
\curveto(647.68274537,876.71305106)(667.253815,857.14198143)(667.253815,832.99984356)
\closepath
}
}
{
\newrgbcolor{curcolor}{0 0 0}
\pscustom[linewidth=1,linecolor=curcolor]
{
\newpath
\moveto(485,955.00000062)
\lineto(485,715.00000262)
}
}
{
\newrgbcolor{curcolor}{0.7019608 0.7019608 0.7019608}
\pscustom[linestyle=none,fillstyle=solid,fillcolor=curcolor]
{
\newpath
\moveto(334.10364,887.11970262)
\curveto(334.29213,887.46642262)(338.53593,891.59955262)(340.11851,892.97773262)
\curveto(342.91129,895.40978262)(346.5018,898.12222262)(349.28405,899.90180262)
\lineto(350.51208,900.68727262)
\lineto(352.28405,900.59967262)
\curveto(356.67633,900.38250262)(361.97587,899.45881262)(366.68101,898.09033262)
\curveto(369.68853,897.21560262)(369.67076,897.41889262)(366.83905,896.28296262)
\curveto(362.31979,894.47008262)(358.36447,892.39852262)(354.4081,889.77238262)
\lineto(352.38518,888.42962262)
\lineto(350.2206,888.55104262)
\curveto(345.70522,888.80431262)(339.28737,888.20725262)(335.09769,887.14413262)
\curveto(333.93411,886.84888262)(333.95671,886.84944262)(334.10364,887.11973262)
\closepath
}
}
{
\newrgbcolor{curcolor}{0.7019608 0.7019608 0.7019608}
\pscustom[linestyle=none,fillstyle=solid,fillcolor=curcolor]
{
\newpath
\moveto(368.30601,884.72424262)
\curveto(364.26684,886.35194262)(359.07226,887.68077262)(354.86851,888.16172262)
\curveto(354.14664,888.24432262)(353.48139,888.35063262)(353.39018,888.39800262)
\curveto(353.09388,888.55187262)(357.06519,891.07205262)(360.48633,892.90119262)
\curveto(363.76559,894.65447262)(369.15705,896.94856262)(369.99351,896.94654262)
\curveto(370.87158,896.94454262)(376.54514,894.52083262)(379.73676,892.78450262)
\curveto(382.45858,891.30375262)(386.69567,888.68644262)(386.57169,888.56246262)
\curveto(386.54759,888.53836262)(385.69221,888.40691262)(384.67094,888.27044262)
\curveto(380.10159,887.65985262)(375.71915,886.52742262)(371.57407,884.88620262)
\curveto(370.55274,884.48181262)(369.70899,884.15527262)(369.69907,884.16055262)
\curveto(369.68907,884.16555262)(369.06226,884.41949262)(368.30601,884.72424262)
\lineto(368.30601,884.72424262)
\closepath
}
}
{
\newrgbcolor{curcolor}{0.7019608 0.7019608 0.7019608}
\pscustom[linestyle=none,fillstyle=solid,fillcolor=curcolor]
{
\newpath
\moveto(339.49351,875.58725262)
\curveto(339.49351,875.78844262)(342.49074,879.12768262)(344.25376,880.89071262)
\curveto(346.82507,883.46201262)(351.71113,887.60679262)(352.70464,888.05946262)
\curveto(353.25884,888.31197262)(358.15749,887.53285262)(361.5881,886.64656262)
\curveto(364.28563,885.94966262)(368.54537,884.51993262)(369.28369,884.06362262)
\curveto(369.34259,884.02722262)(368.5561,883.59200262)(367.53591,883.09649262)
\curveto(363.88194,881.32174262)(360.35417,879.08566262)(357.18101,876.53300262)
\curveto(356.17967,875.72747262)(355.86885,875.53986262)(355.61851,875.58988262)
\curveto(351.22953,876.46688262)(344.90595,876.46172262)(339.96226,875.57708262)
\curveto(339.70445,875.53098262)(339.49351,875.53548262)(339.49351,875.58718262)
\closepath
}
}
{
\newrgbcolor{curcolor}{0.7019608 0.7019608 0.7019608}
\pscustom[linestyle=none,fillstyle=solid,fillcolor=curcolor]
{
\newpath
\moveto(320.24351,866.45441262)
\curveto(320.24351,866.63394262)(322.48406,870.98783262)(323.33581,872.46345262)
\curveto(325.98551,877.05392262)(329.18545,881.55835262)(332.41436,885.24296262)
\curveto(333.03106,885.94669262)(333.62459,886.54790262)(333.73332,886.57897262)
\curveto(338.418,887.91791262)(346.16321,888.68379262)(351.08529,888.29479262)
\lineto(352.11457,888.21349262)
\lineto(350.64779,887.09196262)
\curveto(346.65425,884.03838262)(341.88665,879.40361262)(339.18101,875.94463262)
\curveto(338.74011,875.38096262)(338.59976,875.28775262)(337.99351,875.15595262)
\curveto(334.55476,874.40834262)(329.64218,872.47843262)(326.31646,870.56862262)
\curveto(324.99903,869.81208262)(322.98182,868.47210262)(321.79104,867.56251262)
\curveto(321.26973,867.16430262)(320.70827,866.73537262)(320.54336,866.60932262)
\curveto(320.37844,866.48327262)(320.24351,866.41358262)(320.24351,866.45446262)
\closepath
}
}
{
\newrgbcolor{curcolor}{0.7019608 0.7019608 0.7019608}
\pscustom[linestyle=none,fillstyle=solid,fillcolor=curcolor]
{
\newpath
\moveto(348.1846,766.98078262)
\curveto(345.46171,768.83183262)(343.07465,770.63789262)(340.83318,772.54289262)
\curveto(339.22458,773.91003262)(334.16513,778.81981262)(334.26353,778.91821262)
\curveto(334.29333,778.94801262)(335.26959,778.77838262)(336.43299,778.54126262)
\curveto(340.66018,777.67966262)(342.34654,777.52036262)(347.69412,777.47746262)
\lineto(352.77412,777.43676262)
\lineto(354.6768,776.20927262)
\curveto(358.99232,773.42518262)(362.47723,771.62035262)(367.06112,769.79543262)
\lineto(369.58779,768.78952262)
\lineto(367.32628,768.12325262)
\curveto(362.9807,766.84298262)(358.47881,765.97038262)(354.10462,765.56051262)
\curveto(350.58692,765.23089262)(350.85249,765.16717262)(348.1846,766.98082262)
\closepath
}
}
{
\newrgbcolor{curcolor}{0.7019608 0.7019608 0.7019608}
\pscustom[linestyle=none,fillstyle=solid,fillcolor=curcolor]
{
\newpath
\moveto(345.089,777.69913262)
\curveto(342.21294,777.82542262)(336.50725,778.68464262)(334.21668,779.33641262)
\curveto(333.55622,779.52434262)(331.77752,781.48956262)(329.11344,784.97479262)
\curveto(327.58912,786.96895262)(324.54434,791.54837262)(323.39606,793.57384262)
\curveto(322.44441,795.25249262)(320.39434,799.31909262)(320.49974,799.31909262)
\curveto(320.54034,799.31909262)(321.29644,798.77624262)(322.17998,798.11276262)
\curveto(326.31353,795.00871262)(331.4635,792.53851262)(336.66178,791.16653262)
\lineto(338.81156,790.59914262)
\lineto(340.26813,788.90451262)
\curveto(343.41178,785.24707262)(346.84529,781.98637262)(350.52557,779.16334262)
\lineto(352.24982,777.84072262)
\lineto(350.61396,777.72792262)
\curveto(348.93578,777.61221262)(347.26677,777.60351262)(345.089,777.69912262)
\lineto(345.089,777.69912262)
\closepath
}
}
{
\newrgbcolor{curcolor}{0.7019608 0.7019608 0.7019608}
\pscustom[linestyle=none,fillstyle=solid,fillcolor=curcolor]
{
\newpath
\moveto(352.14727,778.32266262)
\curveto(349.62393,780.19001262)(345.80876,783.52078262)(343.65623,785.73563262)
\curveto(342.06428,787.37367262)(339.4827,790.32456262)(339.56746,790.40932262)
\curveto(339.59746,790.43932262)(340.29522,790.34422262)(341.11806,790.19796262)
\curveto(344.37286,789.61952262)(351.98345,789.68538262)(355.19855,790.31980262)
\lineto(356.11571,790.50078262)
\lineto(357.44341,789.42434262)
\curveto(360.2643,787.13728262)(365.84772,783.69789262)(368.67284,782.50699262)
\lineto(369.62925,782.10383262)
\lineto(368.87764,781.78462262)
\curveto(364.75575,780.03404262)(357.89966,778.28940262)(354.0798,778.01906262)
\curveto(352.82569,777.93026262)(352.63752,777.95986262)(352.14727,778.32266262)
\closepath
}
}
{
\newrgbcolor{curcolor}{0.7019608 0.7019608 0.7019608}
\pscustom[linestyle=none,fillstyle=solid,fillcolor=curcolor]
{
\newpath
\moveto(367.71642,769.85533262)
\curveto(363.24364,771.58574262)(359.24209,773.62928262)(355.35323,776.16904262)
\curveto(352.93571,777.74789262)(352.82071,777.51807262)(356.27659,778.01422262)
\curveto(360.11732,778.56561262)(364.65309,779.77501262)(368.14612,781.17906262)
\curveto(369.81462,781.84973262)(370.10225,781.91723262)(370.47411,781.72536262)
\curveto(373.08211,780.37977262)(380.30487,778.43764262)(384.61903,777.92194262)
\curveto(385.53145,777.81287262)(386.39299,777.68683262)(386.53356,777.64186262)
\curveto(387.05454,777.47517262)(380.92019,773.73757262)(377.2484,771.98450262)
\curveto(374.36082,770.60584262)(370.58084,769.08817262)(370.05582,769.09665262)
\curveto(369.83865,769.10065262)(368.78592,769.44157262)(367.71642,769.85533262)
\closepath
}
}
{
\newrgbcolor{curcolor}{0.7019608 0.7019608 0.7019608}
\pscustom[linestyle=none,fillstyle=solid,fillcolor=curcolor]
{
\newpath
\moveto(407.48342,886.40928262)
\curveto(407.37517,886.51752262)(404.91197,887.12020262)(403.10429,887.48073262)
\curveto(398.91712,888.31582262)(393.69574,888.72487262)(389.87249,888.51732262)
\lineto(387.80999,888.40536262)
\lineto(387.15961,888.87905262)
\curveto(383.35595,891.64942262)(377.75504,894.65718262)(372.6877,896.65065262)
\curveto(370.67594,897.44207262)(370.63511,897.30981262)(373.11787,898.04415262)
\curveto(376.08603,898.92205262)(380.00363,899.75221262)(383.16993,900.17423262)
\curveto(385.0895,900.43008262)(388.61328,900.72504262)(389.77304,900.72695262)
\lineto(390.73609,900.72895262)
\lineto(392.01553,899.89869262)
\curveto(394.83659,898.06801262)(398.12972,895.56991262)(400.87249,893.17998262)
\curveto(403.39502,890.98196262)(407.99433,886.35395262)(407.65619,886.35395262)
\curveto(407.59179,886.35395262)(407.51407,886.37905262)(407.48342,886.40965262)
\closepath
}
}
{
\newrgbcolor{curcolor}{0.7019608 0.7019608 0.7019608}
\pscustom[linestyle=none,fillstyle=solid,fillcolor=curcolor]
{
\newpath
\moveto(420.68499,865.56062262)
\curveto(419.50348,866.63232262)(416.78374,868.64841262)(415.03488,869.74895262)
\curveto(413.05277,870.99628262)(409.48803,872.74860262)(407.2349,873.58321262)
\curveto(405.69726,874.15278262)(402.06185,875.22854262)(401.67468,875.22854262)
\curveto(401.58878,875.22854262)(401.33099,875.46760262)(401.10181,875.75979262)
\curveto(398.24106,879.40704262)(393.66438,883.90493262)(389.58274,887.08053262)
\curveto(388.77025,887.71266262)(388.17963,888.25299262)(388.27024,888.28127262)
\curveto(388.71563,888.42023262)(392.5916,888.46130262)(394.80999,888.35057262)
\curveto(398.51157,888.16580262)(401.91475,887.68003262)(405.24749,886.86070262)
\curveto(407.96885,886.19168262)(407.96352,886.19423262)(408.88029,885.12382262)
\curveto(412.02185,881.45574262)(415.04082,877.19027262)(417.52058,872.91603262)
\curveto(418.94007,870.46932262)(421.62465,865.05857262)(421.48447,864.92683262)
\curveto(421.45727,864.90123262)(421.09749,865.18646262)(420.68499,865.56062262)
\lineto(420.68499,865.56062262)
\closepath
}
}
{
\newrgbcolor{curcolor}{0.7019608 0.7019608 0.7019608}
\pscustom[linestyle=none,fillstyle=solid,fillcolor=curcolor]
{
\newpath
\moveto(399.74749,875.60869262)
\curveto(399.50686,875.66619262)(398.57874,875.81512262)(397.68499,875.93964262)
\curveto(395.41124,876.25643262)(388.73729,876.25835262)(386.62249,875.94264262)
\curveto(385.83186,875.82469262)(384.95984,875.69455262)(384.68465,875.65345262)
\curveto(384.21539,875.58335262)(384.12999,875.62345262)(383.30965,876.29923262)
\curveto(380.00644,879.02022262)(376.20246,881.45602262)(372.25383,883.37860262)
\lineto(370.69768,884.13629262)
\lineto(371.94133,884.62071262)
\curveto(376.16563,886.26617262)(380.53985,887.37692262)(385.1802,887.98249262)
\lineto(387.05041,888.22656262)
\lineto(387.5552,887.89577262)
\curveto(389.18716,886.82637262)(392.75592,883.81406262)(395.20626,881.43768262)
\curveto(396.62215,880.06453262)(399.49802,876.93795262)(400.37153,875.82211262)
\curveto(400.61884,875.50619262)(400.62219,875.47939262)(400.41283,875.49115262)
\curveto(400.28753,875.49815262)(399.98813,875.55095262)(399.74751,875.60850262)
\lineto(399.74751,875.60850262)
\closepath
}
}
{
\newrgbcolor{curcolor}{0.7019608 0.7019608 0.7019608}
\pscustom[linestyle=none,fillstyle=solid,fillcolor=curcolor]
{
\newpath
\moveto(385.12249,778.00210262)
\curveto(380.18712,778.71097262)(375.57253,779.90846262)(371.58798,781.51433262)
\lineto(370.24096,782.05721262)
\lineto(371.52548,782.64832262)
\curveto(375.13066,784.30736262)(379.58619,787.10711262)(382.85171,789.76545262)
\lineto(383.76844,790.51172262)
\lineto(385.03921,790.30862262)
\curveto(387.82201,789.86385262)(388.76134,789.79654262)(392.18499,789.79654262)
\curveto(395.60917,789.79654262)(396.63122,789.87034262)(399.22636,790.30471262)
\curveto(399.86787,790.41210262)(400.41468,790.47802262)(400.4415,790.45120262)
\curveto(400.5332,790.35950262)(397.67001,787.16850262)(395.94942,785.44470262)
\curveto(393.40445,782.89495262)(390.4693,780.37136262)(387.92756,778.54764262)
\curveto(386.86501,777.78525262)(386.76613,777.76602262)(385.12249,778.00210262)
\lineto(385.12249,778.00210262)
\closepath
}
}
{
\newrgbcolor{curcolor}{0.7019608 0.7019608 0.7019608}
\pscustom[linestyle=none,fillstyle=solid,fillcolor=curcolor]
{
\newpath
\moveto(388.05999,777.71101262)
\curveto(387.64749,777.75501262)(387.39722,777.82717262)(387.50385,777.87132262)
\curveto(387.82173,778.00295262)(390.5657,780.17084262)(392.33672,781.68955262)
\curveto(394.46507,783.51469262)(397.80953,786.87846262)(399.55999,788.95454262)
\lineto(400.93499,790.58532262)
\lineto(402.55999,790.98064262)
\curveto(408.66827,792.46664262)(414.96502,795.63761262)(419.68666,799.60543262)
\curveto(421.11713,800.80753262)(421.17409,800.84307262)(421.06075,800.46295262)
\curveto(420.90189,799.93015262)(418.6161,795.45839262)(417.58644,793.66604262)
\curveto(415.0225,789.20295262)(411.92244,784.84546262)(408.58302,781.01070262)
\curveto(407.50214,779.76950262)(407.44272,779.72302262)(406.63902,779.48998262)
\curveto(404.6119,778.90220262)(401.24215,778.25686262)(398.37249,777.90686262)
\curveto(396.59557,777.69014262)(389.5156,777.55568262)(388.05999,777.71101262)
\lineto(388.05999,777.71101262)
\closepath
}
}
{
\newrgbcolor{curcolor}{0.7019608 0.7019608 0.7019608}
\pscustom[linestyle=none,fillstyle=solid,fillcolor=curcolor]
{
\newpath
\moveto(387.74749,765.36985262)
\curveto(382.72594,765.70931262)(377.32819,766.68500262)(372.68499,768.09254262)
\curveto(370.63948,768.71261262)(370.2518,768.86388262)(370.49749,768.94607262)
\curveto(370.60061,768.98057262)(371.52874,769.33548262)(372.55999,769.73476262)
\curveto(377.06836,771.48034262)(381.39491,773.73694262)(385.55999,776.51517262)
\lineto(387.05999,777.51572262)
\lineto(391.99749,777.53582262)
\curveto(396.29451,777.55332262)(397.19442,777.59112262)(398.93499,777.82754262)
\curveto(401.31896,778.15133262)(403.05061,778.46413262)(404.99749,778.92268262)
\curveto(407.01984,779.39901262)(406.99749,779.39516262)(406.99749,779.26724262)
\curveto(406.99749,779.09192262)(402.50389,774.69679262)(400.87249,773.27647262)
\curveto(396.90342,769.82093262)(390.43002,765.20034262)(389.62249,765.24642262)
\curveto(389.45061,765.25642262)(388.60686,765.31182262)(387.74749,765.36987262)
\closepath
}
}
{
\newrgbcolor{curcolor}{0 0 0}
\pscustom[linewidth=0.99999994,linecolor=curcolor]
{
\newpath
\moveto(105,930.00000262)
\lineto(105,908.00000262)
}
}
{
\newrgbcolor{curcolor}{0 0 0}
\pscustom[linestyle=none,fillstyle=solid,fillcolor=curcolor]
{
\newpath
\moveto(105,903.38400289)
\lineto(101.00000024,910.30400248)
\lineto(108.99999976,910.30400248)
\lineto(105,903.38400289)
\closepath
}
}
{
\newrgbcolor{curcolor}{0 0 0}
\pscustom[linewidth=0.99999994,linecolor=curcolor]
{
\newpath
\moveto(105,903.38400289)
\lineto(101.00000024,910.30400248)
\lineto(108.99999976,910.30400248)
\lineto(105,903.38400289)
\closepath
}
}
{
\newrgbcolor{curcolor}{0 0 0}
\pscustom[linewidth=1,linecolor=curcolor]
{
\newpath
\moveto(165,930.00000262)
\lineto(165,908.00002262)
}
}
{
\newrgbcolor{curcolor}{0 0 0}
\pscustom[linestyle=none,fillstyle=solid,fillcolor=curcolor]
{
\newpath
\moveto(165,903.38402262)
\lineto(161,910.30402262)
\lineto(169,910.30402262)
\lineto(165,903.38402262)
\closepath
}
}
{
\newrgbcolor{curcolor}{0 0 0}
\pscustom[linewidth=1,linecolor=curcolor]
{
\newpath
\moveto(165,903.38402262)
\lineto(161,910.30402262)
\lineto(169,910.30402262)
\lineto(165,903.38402262)
\closepath
}
}
{
\newrgbcolor{curcolor}{0 0 0}
\pscustom[linewidth=1,linecolor=curcolor]
{
\newpath
\moveto(330,930.00000262)
\lineto(330,893.00002262)
}
}
{
\newrgbcolor{curcolor}{0 0 0}
\pscustom[linestyle=none,fillstyle=solid,fillcolor=curcolor]
{
\newpath
\moveto(330,888.38402262)
\lineto(326,895.30402262)
\lineto(334,895.30402262)
\lineto(330,888.38402262)
\closepath
}
}
{
\newrgbcolor{curcolor}{0 0 0}
\pscustom[linewidth=1,linecolor=curcolor]
{
\newpath
\moveto(330,888.38402262)
\lineto(326,895.30402262)
\lineto(334,895.30402262)
\lineto(330,888.38402262)
\closepath
}
}
{
\newrgbcolor{curcolor}{0 0 0}
\pscustom[linewidth=0.75000006,linecolor=curcolor]
{
\newpath
\moveto(365,845.00001262)
\lineto(354.86995,880.76003262)
}
}
{
\newrgbcolor{curcolor}{0 0 0}
\pscustom[linestyle=none,fillstyle=solid,fillcolor=curcolor]
{
\newpath
\moveto(353.92636845,884.09096338)
\lineto(358.22734492,879.91511531)
\lineto(352.45450131,878.279792)
\lineto(353.92636845,884.09096338)
\closepath
}
}
{
\newrgbcolor{curcolor}{0 0 0}
\pscustom[linewidth=0.75000006,linecolor=curcolor]
{
\newpath
\moveto(353.92636845,884.09096338)
\lineto(358.22734492,879.91511531)
\lineto(352.45450131,878.279792)
\lineto(353.92636845,884.09096338)
\closepath
}
}
{
\newrgbcolor{curcolor}{0 0 0}
\pscustom[linewidth=0.75,linecolor=curcolor]
{
\newpath
\moveto(374.99998,845.00000262)
\lineto(384.81294,880.85916262)
}
}
{
\newrgbcolor{curcolor}{0 0 0}
\pscustom[linestyle=none,fillstyle=solid,fillcolor=curcolor]
{
\newpath
\moveto(385.72672875,884.19838919)
\lineto(387.25044802,878.40059827)
\lineto(381.46322692,879.98428761)
\lineto(385.72672875,884.19838919)
\closepath
}
}
{
\newrgbcolor{curcolor}{0 0 0}
\pscustom[linewidth=0.75,linecolor=curcolor]
{
\newpath
\moveto(385.72672875,884.19838919)
\lineto(387.25044802,878.40059827)
\lineto(381.46322692,879.98428761)
\lineto(385.72672875,884.19838919)
\closepath
}
}
{
\newrgbcolor{curcolor}{0 0 0}
\pscustom[linewidth=0.75,linecolor=curcolor]
{
\newpath
\moveto(365,825.00000262)
\lineto(354.58188,785.05267262)
}
}
{
\newrgbcolor{curcolor}{0 0 0}
\pscustom[linestyle=none,fillstyle=solid,fillcolor=curcolor]
{
\newpath
\moveto(353.70822487,781.70272177)
\lineto(352.11504682,787.48181198)
\lineto(357.92085418,785.9676783)
\lineto(353.70822487,781.70272177)
\closepath
}
}
{
\newrgbcolor{curcolor}{0 0 0}
\pscustom[linewidth=0.75,linecolor=curcolor]
{
\newpath
\moveto(353.70822487,781.70272177)
\lineto(352.11504682,787.48181198)
\lineto(357.92085418,785.9676783)
\lineto(353.70822487,781.70272177)
\closepath
}
}
{
\newrgbcolor{curcolor}{0 0 0}
\pscustom[linewidth=0.74999994,linecolor=curcolor]
{
\newpath
\moveto(375,825.00000262)
\lineto(384.92807,785.19407262)
}
}
{
\newrgbcolor{curcolor}{0 0 0}
\pscustom[linestyle=none,fillstyle=solid,fillcolor=curcolor]
{
\newpath
\moveto(385.76586865,781.83497519)
\lineto(381.59906701,786.14471514)
\lineto(387.42072633,787.59670586)
\lineto(385.76586865,781.83497519)
\closepath
}
}
{
\newrgbcolor{curcolor}{0 0 0}
\pscustom[linewidth=0.74999994,linecolor=curcolor]
{
\newpath
\moveto(385.76586865,781.83497519)
\lineto(381.59906701,786.14471514)
\lineto(387.42072633,787.59670586)
\lineto(385.76586865,781.83497519)
\closepath
}
}
{
\newrgbcolor{curcolor}{0 0 0}
\pscustom[linewidth=0.75000006,linecolor=curcolor]
{
\newpath
\moveto(123,835.00000262)
\lineto(96,835.15790262)
}
}
{
\newrgbcolor{curcolor}{0 0 0}
\pscustom[linestyle=none,fillstyle=solid,fillcolor=curcolor]
{
\newpath
\moveto(92.53805892,835.17814856)
\lineto(97.74551473,838.14774613)
\lineto(97.71042644,832.14784825)
\lineto(92.53805892,835.17814856)
\closepath
}
}
{
\newrgbcolor{curcolor}{0 0 0}
\pscustom[linewidth=0.75000006,linecolor=curcolor]
{
\newpath
\moveto(92.53805892,835.17814856)
\lineto(97.74551473,838.14774613)
\lineto(97.71042644,832.14784825)
\lineto(92.53805892,835.17814856)
\closepath
}
}
{
\newrgbcolor{curcolor}{0 0 0}
\pscustom[linewidth=0.75,linecolor=curcolor]
{
\newpath
\moveto(147,835.00000262)
\lineto(174,835.15790262)
}
}
{
\newrgbcolor{curcolor}{0 0 0}
\pscustom[linestyle=none,fillstyle=solid,fillcolor=curcolor]
{
\newpath
\moveto(177.4619408,835.17814856)
\lineto(172.28957369,832.14784849)
\lineto(172.2544854,838.14774589)
\lineto(177.4619408,835.17814856)
\closepath
}
}
{
\newrgbcolor{curcolor}{0 0 0}
\pscustom[linewidth=0.75,linecolor=curcolor]
{
\newpath
\moveto(177.4619408,835.17814856)
\lineto(172.28957369,832.14784849)
\lineto(172.2544854,838.14774589)
\lineto(177.4619408,835.17814856)
\closepath
}
}
{
\newrgbcolor{curcolor}{0.80000001 0.80000001 0.80000001}
\pscustom[linestyle=none,fillstyle=solid,fillcolor=curcolor]
{
\newpath
\moveto(105,735)
\lineto(125,735)
\lineto(125,715)
\lineto(105,715)
\closepath
}
}
{
\newrgbcolor{curcolor}{0.7019608 0.7019608 0.7019608}
\pscustom[linestyle=none,fillstyle=solid,fillcolor=curcolor]
{
\newpath
\moveto(340,735)
\lineto(360,735)
\lineto(360,715)
\lineto(340,715)
\closepath
}
}
{
\newrgbcolor{curcolor}{0 0 0}
\pscustom[linewidth=0.99999958,linecolor=curcolor]
{
\newpath
\moveto(560.8,930.00000262)
\lineto(560.8,880.00000262)
}
}
{
\newrgbcolor{curcolor}{0 0 0}
\pscustom[linestyle=none,fillstyle=solid,fillcolor=curcolor]
{
\newpath
\moveto(560.8,875.38400454)
\lineto(556.80000167,882.30400166)
\lineto(564.79999833,882.30400166)
\lineto(560.8,875.38400454)
\closepath
}
}
{
\newrgbcolor{curcolor}{0 0 0}
\pscustom[linewidth=0.99999958,linecolor=curcolor]
{
\newpath
\moveto(560.8,875.38400454)
\lineto(556.80000167,882.30400166)
\lineto(564.79999833,882.30400166)
\lineto(560.8,875.38400454)
\closepath
}
}
{
\newrgbcolor{curcolor}{0 0 0}
\pscustom[linewidth=1,linecolor=curcolor]
{
\newpath
\moveto(410,930.00000262)
\lineto(410,893.00002262)
}
}
{
\newrgbcolor{curcolor}{0 0 0}
\pscustom[linestyle=none,fillstyle=solid,fillcolor=curcolor]
{
\newpath
\moveto(410,888.38402262)
\lineto(406,895.30402262)
\lineto(414,895.30402262)
\lineto(410,888.38402262)
\closepath
}
}
{
\newrgbcolor{curcolor}{0 0 0}
\pscustom[linewidth=1,linecolor=curcolor]
{
\newpath
\moveto(410,888.38402262)
\lineto(406,895.30402262)
\lineto(414,895.30402262)
\lineto(410,888.38402262)
\closepath
}
}
{
\newrgbcolor{curcolor}{0 0 0}
\pscustom[linewidth=0.99999958,linecolor=curcolor]
{
\newpath
\moveto(641.6,930.00000262)
\lineto(641.6,880.00000262)
}
}
{
\newrgbcolor{curcolor}{0 0 0}
\pscustom[linestyle=none,fillstyle=solid,fillcolor=curcolor]
{
\newpath
\moveto(641.6,875.38400454)
\lineto(637.60000167,882.30400166)
\lineto(645.59999833,882.30400166)
\lineto(641.6,875.38400454)
\closepath
}
}
{
\newrgbcolor{curcolor}{0 0 0}
\pscustom[linewidth=0.99999958,linecolor=curcolor]
{
\newpath
\moveto(641.6,875.38400454)
\lineto(637.60000167,882.30400166)
\lineto(645.59999833,882.30400166)
\lineto(641.6,875.38400454)
\closepath
}
}
{
\newrgbcolor{curcolor}{0.7019608 0.7019608 0.7019608}
\pscustom[linestyle=none,fillstyle=solid,fillcolor=curcolor]
{
\newpath
\moveto(401.15006,875.50098262)
\curveto(401.06726,875.51958262)(401.03298,875.55418262)(400.75944,875.89410262)
\curveto(399.25008,877.76985262)(397.29791,879.90896262)(395.42126,881.74345262)
\curveto(393.61631,883.50785262)(391.39235,885.44343262)(389.50793,886.88999262)
\lineto(389.13666,887.17499262)
\lineto(389.18406,887.26149262)
\curveto(389.22446,887.33539262)(389.23636,887.34369262)(389.26516,887.31889262)
\curveto(389.28366,887.30289262)(389.54998,887.09053262)(389.8569,886.84703262)
\curveto(393.78518,883.73057262)(397.99007,879.61376262)(400.73045,876.20125262)
\curveto(400.87833,876.01710262)(401.07184,875.77854262)(401.16047,875.67112262)
\curveto(401.33005,875.46558262)(401.32975,875.46057262)(401.15007,875.50101262)
\closepath
}
}
{
\newrgbcolor{curcolor}{0.7019608 0.7019608 0.7019608}
\pscustom[linestyle=none,fillstyle=solid,fillcolor=curcolor]
{
\newpath
\moveto(384.84025,890.19811262)
\curveto(384.52402,890.41526262)(383.21598,891.22591262)(382.4826,891.65924262)
\curveto(379.61408,893.35416262)(376.66656,894.81843262)(373.60141,896.07124262)
\curveto(372.79996,896.39882262)(371.65211,896.84234262)(371.10345,897.03645262)
\lineto(370.80415,897.14233262)
\lineto(371.01265,897.21223262)
\curveto(371.19434,897.27313262)(371.22566,897.27803262)(371.25625,897.25043262)
\curveto(371.32225,897.19073262)(371.60687,897.06271262)(372.1522,896.84750262)
\curveto(374.65868,895.85833262)(376.32793,895.11481262)(378.5456,893.99975262)
\curveto(380.27014,893.13263262)(381.74307,892.32517262)(383.27754,891.40569262)
\curveto(383.93118,891.01401262)(384.77821,890.48809262)(385.03589,890.31392262)
\lineto(385.16457,890.22692262)
\lineto(385.10577,890.14982262)
\curveto(385.07347,890.10742262)(385.04067,890.07332262)(385.03287,890.07412262)
\curveto(385.02487,890.07486262)(384.93847,890.13062262)(384.84021,890.19809262)
\lineto(384.84021,890.19809262)
\closepath
}
}
{
\newrgbcolor{curcolor}{0.7019608 0.7019608 0.7019608}
\pscustom[linestyle=none,fillstyle=solid,fillcolor=curcolor]
{
\newpath
\moveto(338.92278,875.62423262)
\curveto(338.98328,875.68873262)(339.20304,875.95235262)(339.41105,876.21016262)
\curveto(341.17843,878.40064262)(343.29621,880.62652262)(346.01366,883.14977262)
\curveto(346.1577,883.28352262)(346.28063,883.38787262)(346.28684,883.38166262)
\curveto(346.29284,883.37566262)(346.01911,883.11106262)(345.67808,882.79412262)
\curveto(344.8776,882.05016262)(343.0787,880.25009262)(342.32175,879.43561262)
\curveto(341.28342,878.31837262)(339.99951,876.84333262)(339.28916,875.95160262)
\curveto(338.97978,875.56323262)(338.92546,875.50704262)(338.85936,875.50704262)
\curveto(338.82016,875.50704262)(338.83016,875.52564262)(338.92276,875.62423262)
\lineto(338.92276,875.62423262)
\closepath
}
}
{
\newrgbcolor{curcolor}{0.7019608 0.7019608 0.7019608}
\pscustom[linestyle=none,fillstyle=solid,fillcolor=curcolor]
{
\newpath
\moveto(346.46429,883.55897262)
\curveto(346.65516,883.73881262)(347.48719,884.47552262)(347.95983,884.88318262)
\curveto(348.85282,885.65340262)(349.85145,886.46560262)(350.71094,887.12070262)
\curveto(350.73024,887.13540262)(350.73884,887.13190262)(350.73884,887.10940262)
\curveto(350.73884,887.09150262)(350.59571,886.96775262)(350.42076,886.83443262)
\curveto(349.54804,886.16932262)(347.99152,884.87618262)(346.95031,883.95120262)
\curveto(346.62833,883.66516262)(346.35713,883.43150262)(346.34763,883.43195262)
\curveto(346.33763,883.43240262)(346.39063,883.48955262)(346.46429,883.55896262)
\closepath
}
}
{
\newrgbcolor{curcolor}{0.7019608 0.7019608 0.7019608}
\pscustom[linestyle=none,fillstyle=solid,fillcolor=curcolor]
{
\newpath
\moveto(367.93451,896.69878262)
\curveto(367.93451,896.70378262)(368.04249,896.75038262)(368.17446,896.80290262)
\curveto(368.50993,896.93630262)(368.80397,897.08471262)(368.88437,897.16120262)
\lineto(368.95257,897.22600262)
\lineto(369.07239,897.18530262)
\lineto(369.19221,897.14460262)
\lineto(368.90274,897.04076262)
\curveto(368.74354,896.98366262)(368.46841,896.88137262)(368.29136,896.81350262)
\curveto(367.98206,896.69493262)(367.93455,896.67965262)(367.93455,896.69879262)
\closepath
}
}
{
\newrgbcolor{curcolor}{0.7019608 0.7019608 0.7019608}
\pscustom[linestyle=none,fillstyle=solid,fillcolor=curcolor]
{
\newpath
\moveto(354.76706,889.97400262)
\lineto(354.75016,889.99090262)
\lineto(354.93692,890.11333262)
\curveto(355.73042,890.63350262)(356.81409,891.30340262)(357.72009,891.83384262)
\curveto(360.10314,893.22905262)(362.63109,894.50001262)(365.24493,895.61706262)
\curveto(365.41253,895.68866262)(365.45178,895.70266262)(365.47972,895.70076262)
\curveto(365.51052,895.69876262)(365.49102,895.68886262)(365.23902,895.58073262)
\curveto(363.56898,894.86385262)(361.74223,893.98408262)(360.08763,893.09981262)
\curveto(358.40841,892.20237262)(356.73367,891.21399262)(355.14185,890.18096262)
\curveto(354.95218,890.05787262)(354.79405,889.95716262)(354.79045,889.95716262)
\curveto(354.78645,889.95716262)(354.77625,889.96516262)(354.76705,889.97406262)
\closepath
}
}
{
\newrgbcolor{curcolor}{0 0 0}
\pscustom[linewidth=0.50000004,linecolor=curcolor]
{
\newpath
\moveto(415.70094,832.99992954)
\curveto(415.70094,795.5271773)(385.32327224,765.14950954)(347.85052,765.14950954)
\curveto(310.37776776,765.14950954)(280.0001,795.5271773)(280.0001,832.99992954)
\curveto(280.0001,870.47268178)(310.37776776,900.85034954)(347.85052,900.85034954)
\curveto(385.32327224,900.85034954)(415.70094,870.47268178)(415.70094,832.99992954)
\closepath
}
}
{
\newrgbcolor{curcolor}{0 0 0}
\pscustom[linewidth=2.00000003,linecolor=curcolor]
{
\newpath
\moveto(403.364455,832.99974681)
\curveto(403.364455,802.34019602)(378.50997829,777.48571931)(347.8504275,777.48571931)
\curveto(317.19087671,777.48571931)(292.3364,802.34019602)(292.3364,832.99974681)
\curveto(292.3364,863.6592976)(317.19087671,888.51377431)(347.8504275,888.51377431)
\curveto(378.50997829,888.51377431)(403.364455,863.6592976)(403.364455,832.99974681)
\closepath
}
}
{
\newrgbcolor{curcolor}{0 0 0}
\pscustom[linewidth=0.49999999,linecolor=curcolor]
{
\newpath
\moveto(391.027855,832.99997591)
\curveto(391.027855,809.15365832)(371.69659509,789.82239841)(347.8502775,789.82239841)
\curveto(324.00395991,789.82239841)(304.6727,809.15365832)(304.6727,832.99997591)
\curveto(304.6727,856.8462935)(324.00395991,876.17755341)(347.8502775,876.17755341)
\curveto(371.69659509,876.17755341)(391.027855,856.8462935)(391.027855,832.99997591)
\closepath
}
}
{
\newrgbcolor{curcolor}{0 0 0}
\pscustom[linewidth=0.50000004,linecolor=curcolor]
{
\newpath
\moveto(460.00004,832.99992954)
\curveto(460.00004,795.5271773)(429.62237224,765.14950954)(392.14962,765.14950954)
\curveto(354.67686776,765.14950954)(324.2992,795.5271773)(324.2992,832.99992954)
\curveto(324.2992,870.47268178)(354.67686776,900.85034954)(392.14962,900.85034954)
\curveto(429.62237224,900.85034954)(460.00004,870.47268178)(460.00004,832.99992954)
\closepath
}
}
{
\newrgbcolor{curcolor}{0 0 0}
\pscustom[linewidth=2.00000003,linecolor=curcolor]
{
\newpath
\moveto(447.663555,832.99964681)
\curveto(447.663555,802.34009602)(422.80907829,777.48561931)(392.1495275,777.48561931)
\curveto(361.48997671,777.48561931)(336.6355,802.34009602)(336.6355,832.99964681)
\curveto(336.6355,863.6591976)(361.48997671,888.51367431)(392.1495275,888.51367431)
\curveto(422.80907829,888.51367431)(447.663555,863.6591976)(447.663555,832.99964681)
\closepath
}
}
{
\newrgbcolor{curcolor}{0 0 0}
\pscustom[linewidth=0.49999999,linecolor=curcolor]
{
\newpath
\moveto(435.326955,832.99987591)
\curveto(435.326955,809.15355832)(415.99569509,789.82229841)(392.1493775,789.82229841)
\curveto(368.30305991,789.82229841)(348.9718,809.15355832)(348.9718,832.99987591)
\curveto(348.9718,856.8461935)(368.30305991,876.17745341)(392.1493775,876.17745341)
\curveto(415.99569509,876.17745341)(435.326955,856.8461935)(435.326955,832.99987591)
\closepath
}
}
{
\newrgbcolor{curcolor}{0 0 0}
\pscustom[linestyle=none,fillstyle=solid,fillcolor=curcolor]
{
\newpath
\moveto(389.866071,777.95536)
\curveto(389.866071,776.29850575)(388.52292525,774.95536)(386.866071,774.95536)
\curveto(385.20921675,774.95536)(383.866071,776.29850575)(383.866071,777.95536)
\curveto(383.866071,779.61221425)(385.20921675,780.95536)(386.866071,780.95536)
\curveto(388.52292525,780.95536)(389.866071,779.61221425)(389.866071,777.95536)
\closepath
}
}
{
\newrgbcolor{curcolor}{0 0 0}
\pscustom[linestyle=none,fillstyle=solid,fillcolor=curcolor]
{
\newpath
\moveto(356.044643,777.73215)
\curveto(356.044643,776.07529575)(354.70149725,774.73215)(353.044643,774.73215)
\curveto(351.38778875,774.73215)(350.044643,776.07529575)(350.044643,777.73215)
\curveto(350.044643,779.38900425)(351.38778875,780.73215)(353.044643,780.73215)
\curveto(354.70149725,780.73215)(356.044643,779.38900425)(356.044643,777.73215)
\closepath
}
}
{
\newrgbcolor{curcolor}{0 0 0}
\pscustom[linestyle=none,fillstyle=solid,fillcolor=curcolor]
{
\newpath
\moveto(355.776786,888.178571)
\curveto(355.776786,886.52171675)(354.43364025,885.178571)(352.776786,885.178571)
\curveto(351.11993175,885.178571)(349.776786,886.52171675)(349.776786,888.178571)
\curveto(349.776786,889.83542525)(351.11993175,891.178571)(352.776786,891.178571)
\curveto(354.43364025,891.178571)(355.776786,889.83542525)(355.776786,888.178571)
\closepath
}
}
{
\newrgbcolor{curcolor}{0 0 0}
\pscustom[linestyle=none,fillstyle=solid,fillcolor=curcolor]
{
\newpath
\moveto(390.089286,888.357142)
\curveto(390.089286,886.70028775)(388.74614025,885.357142)(387.089286,885.357142)
\curveto(385.43243175,885.357142)(384.089286,886.70028775)(384.089286,888.357142)
\curveto(384.089286,890.01399625)(385.43243175,891.357142)(387.089286,891.357142)
\curveto(388.74614025,891.357142)(390.089286,890.01399625)(390.089286,888.357142)
\closepath
}
}
\rput(136,835){{\tiny $C_1$}}
\rput(370,835){{\tiny $C_2$}}

\rput(145,724){{\small $U_1$}}
\rput(381,724){{\small $U_2$}}
\rput(603,724){{\small $C_3=\emptyset$ \ and \ $U_3=\emptyset$}}

\rput(102,944){{\small $\partial A_i$}}
\rput(164,944){{\small $\partial A_j$}}

\rput(336,944){{\small $\partial A_i^{1/k}$}}
\rput(416,944){{\small $\partial A_j^{1/k}$}}

\rput(567,944){{\small $\partial A_i^{2/k}$}}
\rput(647,944){{\small $\partial A_j^{2/k}$}}

\end{pspicture}
\caption{Saturation and missing product structures after smoothing}\label{aus}\end{figure}

Proceeding in this fashion $k-2$ times more, in each step replacing $A^{t/k}_i$ by $A^{(t+1)/k}_i$, we obtain a smooth submanifold $N'_k\subset M$ which satisfies the local product structures except on an open $1/k$-neighborhood $U_k$ of
\[
C_k:=\bigcup_{1\leq i_1,\ldots,i_k\leq k}\left(\partial A_{i_1}\cap\partial A^{1/k}_{i_2}\cap\ldots\cap\partial A^{(k-1)/k}_{i_k} \right).
\]
Since the interiors of $A_{i_1}$, $A^{1/k}_{i_2},\ldots,A^{(k-1)/k}_{i_k}$ cover $M$ for all pairwise disjoint $1\leq i_1,\ldots,i_k\leq k$ and $\partial A^{s/k}_{i_j}\cap \partial A^{t/k}_{i_j}=\emptyset$ for $s\neq t$, one concludes that $C_k=\emptyset$ and thus $N:=N'_k$ is as desired (see figure \ref{aus}).

Similarly one sees that $i_*$ is monic: Let $(M,f)$ be an $n$-dimensional $\mathscr{P}$-manifold in $(X-U,A-U)$ and $i_*[(M,f)]=0$. Then there exists a zero $\mathscr{P}$-bordism $(W,g)$ for $(M,i\circ f)$ in $(X,A)$. As above we find an $(n+1)$-dimensional $\mathscr{P}$-submanifold $N\subset W$ with $g(N)\cap U=\emptyset$ and $g^{-1}(X-\mathring{A})\subset N$. It follows that $(N,g|_N)$ is a zero $\mathscr{P}$-bordism for $(M,f)$ in $(X-U,A-U)$.
\end{proof}


Now let $\mathscr{P}$ be a, possibly infinite, family of smooth closed manifolds. It is not difficult to extend our definition to that case: Let us denote the family of all finite subsets of $\mathscr{P}$ by $\mathfrak{F}$. The bordism spanned by $\mathscr{F}\in\mathfrak{F}$ is denoted by $\mathcal{F}_*(\_)$. By taking inclusions $\mathfrak{F}$ becomes a directed set and thus we can form the direct limit
\[
\mathcal{P}_*(\_):=\lim_{\mathscr{F}\in\mathfrak{F}} \mathcal{F}_*(\_).
\]
As the direct limit preserves exactness $\mathcal{P}_*(\_)$ is again a homology theory.

We turn to the computation of the coefficients groups $\mathcal{P}_*$. Let $G=SO$ or $Spin$. As usual let $MG$ denote the Thom spectrum associated to the oriented resp.~ spin bordism theory, i.e.~ $\Omega_*^G(\_)=MG_*(\_)$. From now on we fix a \emph{regular sequence} $P_1, P_2,\ldots$ of smooth closed manifolds which means that for all $i\geq 1$
\[
MG_*/\left([P_1],\ldots,[P_{i-1}]\right)\xto{\x[P_i]}MG_*/\left([P_1],\ldots,[P_{i-1}]\right)
\]
is injective, here $\left([P_1],\ldots,[P_{i-1}]\right)$ denotes the ideal generated by $[P_1],\ldots,[P_{i-1}]$.

\begin{rem} One can show cf.~ \cite[Prop.~ 2.7.1.]{cw} that in our situation any permutation of $P_1,P_2,\ldots$ is again a regular sequence.
\end{rem}

Let $\mathscr{P}=\{P_1,P_2,\ldots\}$. We have a natural transformation of homology theories $\iota_*\colon\mathcal{P}_*(\_)\to MG_*(\_)$ by forgetting the $\mathscr{P}$-structure. The following proposition is the crucial step in determining the coefficients.

\begin{prop}\label{inj} $\iota_*$ is injective on coefficients. \end{prop}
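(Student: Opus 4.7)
The plan is to induct on $k = |\mathscr{P}|$; the infinite-family statement then follows since direct limits preserve kernels. The base case $k = 0$ is tautological because $\mathcal{P}_* = MG_*$ and $\iota_*$ is the identity in that situation.

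For the inductive step, let $M$ be a closed $n$-dimensional $\mathscr{P}$-manifold with $\iota_*[M] = 0$ and pick a $G$-null-bordism $W$ with $\partial W = M$. After attaching an external collar $M \x [0,1]$ and straightening angles, we may assume $W$ carries a collar neighborhood $M \x [0,1]$ of its boundary on which the $\mathscr{P}$-structure extends trivially, with $A_i \x [0,1]$ and $B_I \x [0,1]$ playing the roles of $\hat{A}_i$ and $\hat{B}_I$. The goal is to prolong these product pieces across the remainder of $W$ so that $W$ itself becomes an $(n+1)$-dimensional $\mathscr{P}$-manifold with boundary $M$.

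The heart of the argument is the construction of $\hat{A}_k \cong P_k \x \hat{B}_k \subset W$. Set $\mathscr{P}' := \mathscr{P} \setminus \{P_k\}$. One first extracts from $B_k$ a closed $\mathscr{P}'$-manifold (by doubling along its boundary, which is permissible after inverting $2$, or by a Mayer--Vietoris style excision of $A_k$ from $M$); the vanishing of $\iota_*[M]$ together with regularity of $[P_k]$ modulo $([P_1], \ldots, [P_{k-1}])$ forces the corresponding class in $\mathcal{P}'_{*}$ to vanish. The inductive hypothesis, applied to $\mathscr{P}'$, then produces a $\mathscr{P}'$-filling $\hat{B}_k$. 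Setting $\hat{A}_k := P_k \x \hat{B}_k$ and gluing along $A_k \x [0,1]$ inside the collar extends the $P_k$-part across $W$. Iterating this procedure for $i = k-1, k-2, \ldots, 1$ and using transversality to keep the successive $\hat{A}_i$ in general position yields a transversal family in $W$.

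The main obstacle is coordinating the local product decompositions on all iterated intersections $\hat{A}_I = \bigcap_{i \in I} \hat{A}_i$ so that the compatibility maps $\hat{\phi}^I_J$ of Definition \ref{fdef} are satisfied simultaneously for every $J \subset I$. This is handled by a secondary downward induction on $|I|$, using uniqueness of tubular neighborhoods together with coherent choices of collars to glue the product decompositions of deeper strata to those of shallower ones. Once this bookkeeping is complete, $W$ equipped with the data $(\hat{A}_i, \hat{B}_I, \hat{\phi}_I)$ is a $\mathscr{P}$-null-bordism of $M$, proving $[M] = 0$ in $\mathcal{P}_n$ and completing the induction.
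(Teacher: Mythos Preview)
Your outline has a genuine gap at the step where you try to extend the $\mathscr{P}$-structure across a given $G$-null-bordism $W$.

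First, the ``doubling'' manoeuvre does not yield any information: the double of $B_k$ along its boundary is always null-bordant in $MG_*$, independently of $[M]$, so you cannot use it to force any class to vanish.  What is actually relevant is the closed $\mathscr{P}'$-manifold $\partial B_k$ (Lemma~\ref{part}).  Its vanishing in $MG_*$ comes not from $\iota_*[M]=0$ but from the tautology $P_k\times\partial B_k=\partial(M-\mathring{A}_k)$ together with $[P_k]$ being a non-zero-divisor; the hypothesis $\iota_*[M]=0$ enters only much later.

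Second, and more seriously, once you have a $\mathscr{P}'$-filling $\hat{B}_k$ of $\partial B_k$, setting $\hat{A}_k:=P_k\times\hat{B}_k$ does not produce a submanifold of $W$.  There is no reason for the abstract manifold $P_k\times\hat{B}_k$ to embed in the given $W$ extending $A_k$, and ``gluing along the collar'' changes $W$ into something whose boundary is no longer $M$.  The subsequent iteration and the ``secondary downward induction on $|I|$'' then have no solid object to work on; transversality and tubular-neighborhood uniqueness cannot manufacture the required product decompositions $\hat{A}_I\cong P_I\times\hat{B}_I$ inside an arbitrary $W$.

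The paper avoids this obstruction entirely: it never attempts to promote $W$ to a $\mathscr{P}$-bordism.  Instead it stays inside $\mathcal{R}_*$ and uses the $\mathscr{S}$-filling $N$ of $\partial B_{i_1}$ to perform a handle attachment $W'=(M\times[0,1])\cup (P_{i_1}\times N\times[-1,1])$, which is an $\mathscr{R}$-bordism from $M$ to a disjoint union $M_1\,\dot{\cup}\,M_{>1}$ with $M_1$ a pure $P_{i_1}$-manifold and $M_{>1}$ having empty $P_{i_1}$-part.  Iterating reduces $M$, up to $\mathscr{R}$-bordism, to $\dot{\bigcup}_i\,P_i\times Q_i$.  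Only at this point is the hypothesis $\iota_*[M]=0$ invoked: together with regularity it forces successive refinements down to a single product $P_1\times\cdots\times P_k\times Q$ with $[Q]=0$ in $MG_*$, which visibly $\mathscr{R}$-bounds.  The key idea you are missing is this surgery that separates the $P_i$-parts rather than an extension of structure across a pre-existing bordism.
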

\begin{proof}
Let $\mathfrak{R}_k$ denote the family of all subsets of $\mathscr{P}$ consisting of $k$ elements. For $\mathscr{R}\in\mathfrak{R}_k$ we have the bordism spanned by $\mathscr{R}$, denoted by $\mathcal{R}_*(\_)$, and the forgetful map $\iota^{\mathcal{R}}_*\colon\mathcal{R}_*(\_)\to MG_*(\_)$. We shall prove the following statement by induction over $k$ from which Prop.~ \ref{inj} follows immediately:
\[
\text{Let }\mathscr{R}\in\mathfrak{R}_k,\text{ then }\iota^{\mathcal{R}}_*\colon\mathcal{R}_*\to MG_*\text{ is injective}.
\]

$k=1$: Let $\mathscr{R}\in\mathfrak{R}_1$. In this case a closed $\mathscr{R}$-manifold $M$ is diffeomorphic to $P_l\x B_l$ for $\{P_l\}=\mathscr{R}$. If $\iota^{\mathscr{R}}_*([M])=0$ then $[B_l]=0$ in $MG_*$ as $[P_l]$ is not a zero divisor. Hence $B_l=\partial W$ for some manifold $W$. Now the $P_l$-manifold $P_l\x W$ establishes a zero $\mathscr{R}$-bordism for $M$.

$k-1\to k$: Assume that $\iota^{\mathcal{S}}_*\colon\mathcal{S}_*\to MG_*$ is injective for all $\mathscr{S}\in\mathfrak{R}_{k-1}$. Let $\mathscr{R}:=\{P_{i_1},\ldots,P_{i_k}\}\in\mathfrak{R}_k$ and
\[
\left(M,(A_{i_j})_{1\leq j\leq k} ,(B_I,\phi_I)_{I\subset\{i_1,\ldots,i_k\}}\right)
\]
a closed $\mathscr{R}$-manifold. One has to show that $[M]=0$ in $\mathcal{R}_*$ if $\iota_*^{\mathcal{R}}[M]=0$. We shall apply, step by step, surgery to the submanifolds $\partial A_{i_j}\subset M$ in order to isolate the $P_{i_j}$-parts and to obtain a disjoint union of $P_{i_j}$-manifolds.

It follows from Def.~ \ref{fdef} that $C_{i_j}$, together with $\phi_{i_j}$, induces a smooth structure on $\partial B_{i_j}$ such that $\phi_{i_j}\colon\partial A_{i_j}\to P_{i_j}\x\partial B_{i_j}$ is a diffeomorphism for all $1\leq j\leq k$. The induction hypothesis becomes applicable by means of

\begin{lem}\label{part} Let $\mathscr{S}:=\mathscr{R}-\{P_{i_j}\}$, i.e.~ $\mathscr{S}\in\mathfrak{R}_{k-1}$. Then $\partial B_{i_j}$ inherits the structure of an $\mathscr{S}$-manifold.\end{lem}

\begin{proof} Let $J\subset\{i_1,\ldots,\hat{i}_j,\ldots,i_k\}$. There are manifolds $C_{J}\subset B_{\{i_j\}\cup J}$ 
such that
\[
\phi_{\{i_j\}\cup J}(\partial A_{i_j}\cap A_J)=P_{i_j}\x P_J\x C_{J}.
\]
Then we have an inclusion $\phi_{i_j}^{\{i_j\}\cup J}\colon P_J\x C_{J}\hto\partial B_{i_j}$ (cf.~ point four of Def.~ \ref{fdef})and observe
\[
\bigcup_{l\neq j} \phi_{i_j}^{\{i_j,i_l\}}(P_l\x C_l)=\partial B_{i_j}.
\]
Now the induced $\mathscr{S}$-structure on $\partial B_{i_j}$ is defined by setting
\[
\left(\partial B_{i_j},\left(\phi_{i_j}^{\{i_j,i_l\}}\left(P_l\x C_l\right)\right)_{l\neq j},\left(C_{J},\left(\phi^{\{i_j\}\cup J}_{i_j}\right)^{-1}\bigg\vert_{\im \phi^{\{i_j\}\cup J}_{i_j}} \right)_{J\subset\{i_1,\ldots,\hat{i}_j,\ldots,i_k\}}\right).
\]
\end{proof}

In the first surgery step we consider $\partial A_{i_1}$. As $[P_{i_1}]$ is not a zero divisor, $\partial B_{i_1}$ is zero bordant in $MG_*$. Hence, by induction hypothesis, $\partial B_{i_1}$ is zero $\mathscr{S}$-bordant with $\mathscr{S}:=\mathscr{R}-\{i_1\}$, i.e.~ there exists an $\mathscr{S}$-manifold $(N,C_i)$ with $\partial N=\partial B_{i_1}$. By abuse of notation, we shall use the indices $\{1,\ldots,k\}$ instead of $\{i_1\ldots,i_k\}$ in the sequel. Fix bicollar neighborhoods $\partial A_i\x[-1,1]$ of $\partial A_i\subset M$, say $\partial A_i\x\{-1\}\subset A_i$, such that $(\partial A_i\x[-1,1])\cap A_j$ is a bicollar neighborhood for $(\partial A_i)\cap A_j$ in $A_j$ for all $j$.

Now we attach $P_1\x N\x[-1,1]$ to $M\x[0,1]$ by identifying
\[
(x,y,1)\in(\partial A_1\x[-1,1]\x[0,1])\subset (M\x[0,1])\, \text{ with }\, (\phi_1(x),y)\in (P_1\x\partial N)\x[-1,1]
 \]
to obtain an oriented (resp.~ spin) manifold $W$. It is well-known that the corners of $W$ can be smoothened in a canonical way.

Let us equip $W$ with the structure of an $\mathscr{R}$-manifold (see figure \ref{henkel}). On $M\x[0,1]$ we simply set $A'_i:=A_i\x[0,1]$. On $P_1\x N\x[-1,1]$ we define $A''_1:=P_1\x N\x[-1,0]$ and, for $i>1$, $A''_i:=P_1\x C_i\x[-1,1]$. Note that this $\mathscr{R}$-structure on $W$ induces the given one on $M\x\{0\}$.

\begin{figure}
\psset{xunit=.6pt,yunit=.6pt,runit=.6pt}
\begin{pspicture}(150,375)(600,640)
{
\newrgbcolor{curcolor}{0 0 0}
\pscustom[linewidth=1,linecolor=curcolor]
{
\newpath
\moveto(80,420.00000262)
\lineto(640,420.00000262)
}
}
{
\newrgbcolor{curcolor}{0 0 0}
\pscustom[linewidth=1,linecolor=curcolor]
{
\newpath
\moveto(80,480.00000262)
\lineto(220,480.00000262)
}
}
{
\newrgbcolor{curcolor}{0 0 0}
\pscustom[linewidth=1,linecolor=curcolor]
{
\newpath
\moveto(500,480.00000262)
\lineto(640,480.00000262)
}
}
{
\newrgbcolor{curcolor}{0 0 0}
\pscustom[linewidth=1,linecolor=curcolor]
{
\newpath
\moveto(279.99999,480.00000262)
\lineto(440,480.00000262)
}
}
{
\newrgbcolor{curcolor}{0 0 0}
\pscustom[linewidth=0.99999989,linecolor=curcolor]
{
\newpath
\moveto(250.00005104,479.98498697)
\curveto(249.99168233,540.22500918)(299.23358863,589.06592302)(359.9849271,589.0742213)
\curveto(420.73626557,589.08251958)(469.99174024,540.25505991)(470.00010896,480.0150377)
\lineto(470.00010616,479.9711848)
}
}
{
\newrgbcolor{curcolor}{0 0 0}
\pscustom[linewidth=0.99999986,linecolor=curcolor]
{
\newpath
\moveto(279.99991076,480.01006419)
\curveto(279.99382442,523.82099209)(315.80612178,559.34165886)(359.98891571,559.34769397)
\curveto(404.17170964,559.35372909)(439.9938749,523.84284717)(439.99996124,480.03191926)
\lineto(439.99995921,480.00002625)
}
}
{
\newrgbcolor{curcolor}{0 0 0}
\pscustom[linewidth=0.99999981,linecolor=curcolor]
{
\newpath
\moveto(219.99990133,479.98274349)
\curveto(219.98925024,556.65183958)(282.66075025,618.81298392)(359.9806145,618.82354536)
\curveto(437.30047874,618.8341068)(499.98924758,556.69008595)(499.99989867,480.02098986)
\lineto(499.99989511,479.96517711)
}
}
{
\newrgbcolor{curcolor}{0 0 0}
\pscustom[linewidth=1,linecolor=curcolor]
{
\newpath
\moveto(432,599.00000262)
\lineto(395.66274,551.25478262)
}
}
{
\newrgbcolor{curcolor}{0 0 0}
\pscustom[linewidth=1,linecolor=curcolor]
{
\newpath
\moveto(325,480.00000262)
\lineto(325,420.00000262)
}
}
{
\newrgbcolor{curcolor}{0 0 0}
\pscustom[linewidth=0.99999999,linecolor=curcolor]
{
\newpath
\moveto(465.00000395,415.00000831)
\curveto(439.80000345,392.50004281)(372.6000021,422.49999431)(360.00000185,400.00003631)
}
}
{
\newrgbcolor{curcolor}{0 0 0}
\pscustom[linewidth=0.99999999,linecolor=curcolor]
{
\newpath
\moveto(254.99999975,415.00000831)
\curveto(280.20000025,392.50004281)(347.4000016,422.49999431)(360.00000185,400.00003631)
}
}
{
\newrgbcolor{curcolor}{0 0 0}
\pscustom[linewidth=1,linecolor=curcolor]
{
\newpath
\moveto(315,530.00002262)
\lineto(345,520.00000262)
}
}
{
\newrgbcolor{curcolor}{0 0 0}
\pscustom[linestyle=none,fillstyle=solid,fillcolor=curcolor]
{
\newpath
\moveto(310.62087877,531.45973261)
\lineto(318.45067922,533.06616497)
\lineto(315.92085254,525.4767001)
\lineto(310.62087877,531.45973261)
\closepath
}
}
{
\newrgbcolor{curcolor}{0 0 0}
\pscustom[linewidth=1,linecolor=curcolor]
{
\newpath
\moveto(310.62087877,531.45973261)
\lineto(318.45067922,533.06616497)
\lineto(315.92085254,525.4767001)
\lineto(310.62087877,531.45973261)
\closepath
}
}
{
\newrgbcolor{curcolor}{0 0 0}
\pscustom[linewidth=1.00000005,linecolor=curcolor]
{
\newpath
\moveto(319.99999925,415.00000831)
\curveto(291.79999937,392.50004281)(216.59999969,422.49999431)(202.49999975,400.00003631)
}
}
{
\newrgbcolor{curcolor}{0 0 0}
\pscustom[linewidth=1.00000005,linecolor=curcolor]
{
\newpath
\moveto(85.00000025,415.00000831)
\curveto(113.20000013,392.50004281)(188.39999981,422.49999431)(202.49999975,400.00003631)
}
}
{
\newrgbcolor{curcolor}{0 0 0}
\pscustom[linewidth=1,linecolor=curcolor]
{
\newpath
\moveto(228,557.00000262)
\lineto(198,567.00002262)
}
}
{
\newrgbcolor{curcolor}{0 0 0}
\pscustom[linestyle=none,fillstyle=solid,fillcolor=curcolor]
{
\newpath
\moveto(232.37912123,555.54029262)
\lineto(224.54932078,553.93386027)
\lineto(227.07914746,561.52332513)
\lineto(232.37912123,555.54029262)
\closepath
}
}
{
\newrgbcolor{curcolor}{0 0 0}
\pscustom[linewidth=1,linecolor=curcolor]
{
\newpath
\moveto(232.37912123,555.54029262)
\lineto(224.54932078,553.93386027)
\lineto(227.07914746,561.52332513)
\lineto(232.37912123,555.54029262)
\closepath
}
}
{
\newrgbcolor{curcolor}{0 0 0}
\pscustom[linewidth=0.99999994,linecolor=curcolor]
{
\newpath
\moveto(454.24239,559.24239262)
\lineto(494.24239,594.24237262)
}
}
{
\newrgbcolor{curcolor}{0 0 0}
\pscustom[linestyle=none,fillstyle=solid,fillcolor=curcolor]
{
\newpath
\moveto(450.76849532,556.20273651)
\lineto(453.34230961,563.769894)
\lineto(458.61034445,557.74927931)
\lineto(450.76849532,556.20273651)
\closepath
}
}
{
\newrgbcolor{curcolor}{0 0 0}
\pscustom[linewidth=0.99999994,linecolor=curcolor]
{
\newpath
\moveto(450.76849532,556.20273651)
\lineto(453.34230961,563.769894)
\lineto(458.61034445,557.74927931)
\lineto(450.76849532,556.20273651)
\closepath
}
}
{
\newrgbcolor{curcolor}{0 0 0}
\pscustom[linewidth=1,linecolor=curcolor]
{
\newpath
\moveto(250,480.00000262)
\lineto(250,420.00000262)
}
}
{
\newrgbcolor{curcolor}{0 0 0}
\pscustom[linewidth=1,linecolor=curcolor]
{
\newpath
\moveto(470,480.00000262)
\lineto(470,420.00000262)
}
}
{
\newrgbcolor{curcolor}{0 0 0}
\pscustom[linewidth=0.99999994,linecolor=curcolor]
{
\newpath
\moveto(644.99997778,475.00000332)
\curveto(660.00002118,469.00000365)(639.99996498,453.00000455)(655.00000338,450.00000472)
}
}
{
\newrgbcolor{curcolor}{0 0 0}
\pscustom[linewidth=0.99999994,linecolor=curcolor]
{
\newpath
\moveto(644.99997778,425.00000612)
\curveto(660.00002118,431.00000578)(639.99996498,447.00000489)(655.00000338,450.00000472)
}
}
{
\newrgbcolor{curcolor}{0.7019608 0.7019608 0.7019608}
\pscustom[linestyle=none,fillstyle=solid,fillcolor=curcolor]
{
\newpath
\moveto(325.50668,449.98475262)
\lineto(325.50668,479.27918262)
\lineto(382.72461,479.27918262)
\curveto(434.00792,479.27918262)(439.98304,479.31968262)(440.33301,479.66965262)
\curveto(440.66337,480.00002262)(440.69721,480.55395262)(440.55279,483.26832262)
\curveto(440.19494,489.99446262)(439.20327,496.11550262)(437.52908,501.93210262)
\curveto(431.69002,522.21857262)(418.26625,539.11640262)(399.60914,549.66567262)
\lineto(396.57728,551.37997262)
\lineto(405.17302,562.66390262)
\curveto(409.90068,568.87007262)(413.8398,574.01686262)(413.92663,574.10123262)
\curveto(414.27369,574.43846262)(421.4213,569.85619262)(426.2694,566.18840262)
\curveto(448.02983,549.72570262)(462.97389,525.45911262)(467.66001,498.97715262)
\curveto(469.29606,489.73152262)(469.44815,485.60883262)(469.45101,450.42670262)
\lineto(469.45301,420.69033262)
\lineto(397.47964,420.69033262)
\lineto(325.50627,420.69033262)
\lineto(325.50627,449.98475262)
\closepath
}
}
{
\newrgbcolor{curcolor}{0.80000001 0.80000001 0.80000001}
\pscustom[linestyle=none,fillstyle=solid,fillcolor=curcolor]
{
\newpath
\moveto(250.84257,455.60841262)
\curveto(250.96436,489.17981262)(250.99834,490.69724262)(251.72807,495.16198262)
\curveto(254.29497,510.86711262)(259.05242,523.94823262)(266.7131,536.36492262)
\curveto(278.38939,555.29025262)(296.08018,570.62255262)(316.18602,579.24223262)
\curveto(330.64354,585.44039262)(344.31938,588.24947262)(360.03729,588.24947262)
\curveto(374.51921,588.24947262)(387.53068,585.80589262)(400.5076,580.64906262)
\curveto(404.41715,579.09546262)(410.92197,576.02741262)(411.96635,575.24445262)
\curveto(412.60706,574.76411262)(412.16642,574.10709262)(404.16202,563.60793262)
\curveto(399.49617,557.48784262)(395.54953,552.40069262)(395.39173,552.30317262)
\curveto(395.23393,552.20567262)(394.00481,552.62081262)(392.66034,553.22577262)
\curveto(383.36961,557.40625262)(370.9374,559.98341262)(360.06148,559.98341262)
\curveto(331.37278,559.98341262)(305.03324,545.03762262)(290.38501,520.44703262)
\curveto(286.01835,513.11653262)(282.68424,504.44427262)(280.96331,495.94052262)
\curveto(279.95037,490.93520262)(279.06789,481.76281262)(279.45201,480.23232262)
\lineto(279.69376,479.26912262)
\lineto(302.00838,479.26912262)
\lineto(324.32301,479.26912262)
\lineto(324.32301,449.98341262)
\lineto(324.32301,420.69770262)
\lineto(287.51946,420.69770262)
\lineto(250.71591,420.69770262)
\lineto(250.84257,455.60841262)
\lineto(250.84257,455.60841262)
\closepath
}
}
{
\newrgbcolor{curcolor}{0.90196079 0.90196079 0.90196079}
\pscustom[linestyle=none,fillstyle=solid,fillcolor=curcolor]
{
\newpath
\moveto(80.719597,449.98341262)
\lineto(80.719597,479.26912262)
\lineto(150.27317,479.27412262)
\curveto(188.52763,479.27712262)(220.00357,479.39766262)(220.21969,479.54198262)
\curveto(220.46022,479.70259262)(220.69867,481.95161262)(220.83445,485.34008262)
\curveto(222.45806,525.86098262)(241.57228,563.05447262)(273.81468,588.43190262)
\curveto(289.19033,600.53380262)(307.71035,609.48518262)(327.14817,614.20986262)
\curveto(354.3366,620.81844262)(383.86367,618.91972262)(410.16124,608.87176262)
\curveto(415.20878,606.94316262)(429.63666,599.96460262)(430.34769,599.10787262)
\curveto(430.70941,598.67202262)(429.33672,596.67658262)(422.29894,587.40767262)
\curveto(417.63228,581.26157262)(413.77694,576.18222262)(413.73152,576.12023262)
\curveto(413.68612,576.05823262)(412.78894,576.49149262)(411.73783,577.08300262)
\curveto(405.80851,580.41979262)(392.8759,585.16141262)(384.83122,586.94806262)
\curveto(375.28497,589.06820262)(371.79459,589.41504262)(360.00531,589.41504262)
\curveto(350.65651,589.41504262)(348.42838,589.30758262)(344.28654,588.65696262)
\curveto(326.01807,585.78726262)(309.84698,579.38429262)(295.89817,569.49751262)
\curveto(285.57932,562.18361262)(275.56361,552.04456262)(268.50665,541.76865262)
\curveto(259.93128,529.28174262)(254.08715,515.23399262)(251.21069,500.19369262)
\curveto(249.55428,491.53277262)(249.53831,491.13674262)(249.39167,455.07223262)
\lineto(249.2519,420.69723262)
\lineto(164.98575,420.69723262)
\lineto(80.719597,420.69723262)
\lineto(80.719597,449.98294262)
\closepath
}
}
{
\newrgbcolor{curcolor}{0 0 0}
\pscustom[linewidth=1,linecolor=curcolor,linestyle=dashed,dash=2 4]
{
\newpath
\moveto(220,480.00000262)
\lineto(280,480.00000262)
}
}
{
\newrgbcolor{curcolor}{0 0 0}
\pscustom[linewidth=1,linecolor=curcolor,linestyle=dashed,dash=2 4]
{
\newpath
\moveto(440,480.00000262)
\lineto(500,480.00000262)
}
}
{
\newrgbcolor{curcolor}{0 0 0}
\pscustom[linestyle=none,fillstyle=solid,fillcolor=curcolor]
{
\newpath
\moveto(474,419.99999652)
\curveto(474,417.79085752)(472.209139,415.99999652)(470,415.99999652)
\curveto(467.790861,415.99999652)(466,417.79085752)(466,419.99999652)
\curveto(466,422.20913552)(467.790861,423.99999652)(470,423.99999652)
\curveto(472.209139,423.99999652)(474,422.20913552)(474,419.99999652)
\closepath
}
}
{
\newrgbcolor{curcolor}{0 0 0}
\pscustom[linestyle=none,fillstyle=solid,fillcolor=curcolor]
{
\newpath
\moveto(329,419.99999652)
\curveto(329,417.79085752)(327.209139,415.99999652)(325,415.99999652)
\curveto(322.790861,415.99999652)(321,417.79085752)(321,419.99999652)
\curveto(321,422.20913552)(322.790861,423.99999652)(325,423.99999652)
\curveto(327.209139,423.99999652)(329,422.20913552)(329,419.99999652)
\closepath
}
}
{
\newrgbcolor{curcolor}{0 0 0}
\pscustom[linestyle=none,fillstyle=solid,fillcolor=curcolor]
{
\newpath
\moveto(254,419.99999652)
\curveto(254,417.79085752)(252.209139,415.99999652)(250,415.99999652)
\curveto(247.790861,415.99999652)(246,417.79085752)(246,419.99999652)
\curveto(246,422.20913552)(247.790861,423.99999652)(250,423.99999652)
\curveto(252.209139,423.99999652)(254,422.20913552)(254,419.99999652)
\closepath
}
}
{
\newrgbcolor{curcolor}{0 0 0}
\pscustom[linestyle=none,fillstyle=solid,fillcolor=curcolor]
{
\newpath
\moveto(254,479.99999952)
\curveto(254,477.79086052)(252.209139,475.99999952)(250,475.99999952)
\curveto(247.790861,475.99999952)(246,477.79086052)(246,479.99999952)
\curveto(246,482.20913852)(247.790861,483.99999952)(250,483.99999952)
\curveto(252.209139,483.99999952)(254,482.20913852)(254,479.99999952)
\closepath
}
}
{
\newrgbcolor{curcolor}{0 0 0}
\pscustom[linestyle=none,fillstyle=solid,fillcolor=curcolor]
{
\newpath
\moveto(474,479.99999952)
\curveto(474,477.79086052)(472.209139,475.99999952)(470,475.99999952)
\curveto(467.790861,475.99999952)(466,477.79086052)(466,479.99999952)
\curveto(466,482.20913852)(467.790861,483.99999952)(470,483.99999952)
\curveto(472.209139,483.99999952)(474,482.20913852)(474,479.99999952)
\closepath
}
}
\rput(175,577){$M_{>1}$}
\rput(364,513){$M_1$}
\rput(360,380){$A_1$}
\rput(202,380){$A_i$}
\rput(395,448){$A'_1$}
\rput(167,448){$A'_i$}
\rput(702,448){$M\x[0,1]$}
\rput(536,610){$P_1\x N$}
\rput(286,448){\small{$A'_i\cap A'_1$}}
\end{pspicture}
\caption{$\mathscr{R}$-structure of $W$}\label{henkel}\end{figure}
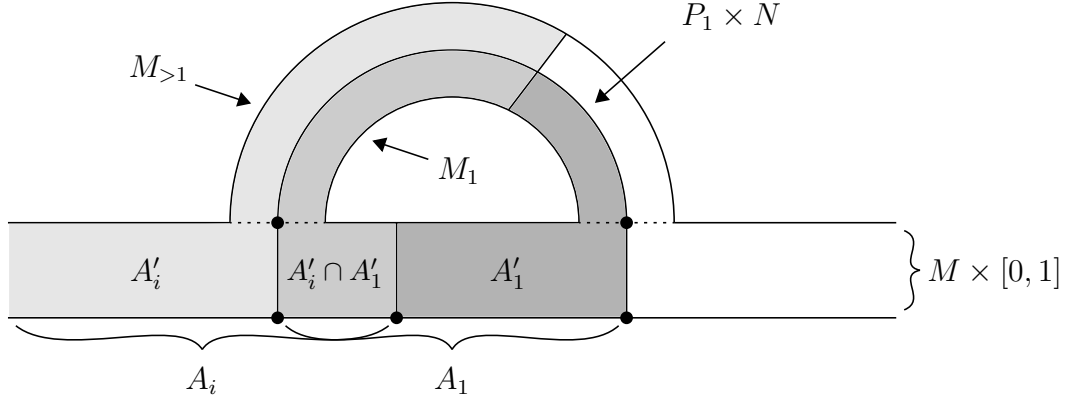

By construction $W$ is an $\mathscr{R}$-bordism between $M$ and the disjoint union of
\begin{itemize}
\item $M_1:=A_1\cup (P_1\x N)\text{ glued along }\phi_1(\partial A_1)=\partial (P_1\x N)$,
\item $M_{>1} :=(M-\mathring{A}_1)\cup (P_1\x N)\text{ glued along }\phi_1(\partial (M-\mathring{A}_1))=\partial (P_1\x N)$.
\end{itemize}
The trace of the bordism induces an $\mathscr{R}$-structure on $M_{>1}$ with an empty $P_1$-part. A priori $W$ induces on $M_1$ an $\mathscr{R}$-structure with non-empty $P_i$-parts, $i>1$. However, $M_1$ is completely covered by the $P_1$-part $P_1\x(B_1\cup N)$. The following lemma shows that we can ignore redundant subsets $A_i$, more precisely:

\begin{lem}
Let $(M,(A_i)_{1\leq i\leq k})$ be a closed $\mathscr{R}$-manifold, $1\leq j \leq k$ and $M=\cup_{i=1,i\neq j}^k A_i$. Then $(M,(A_i)_{1\leq i\leq k})$ is $\mathscr{R}$-bordant to $(M,(A'_i)_{1\leq i\leq k})$ with $A'_i= A_i$ for $i\neq j$ and $A'_j=\emptyset$.
\end{lem}
\begin{proof}
Let $A_j$ be diffeomorphic to $P_j\x B_j$. Choose a collar neighborhood $\partial B_j\x[-1,0]$, say $\partial B_j\x\{0\}=\partial B_j$, such that for the induced collar neighborhood $\partial A_j\x[-1,0]$ it is true that $(\partial A_j\x[-1,0])\cap A_i$ is a collar neighborhood for $(\partial A_j)\cap A_i$ in $A_i$ for all $i$. Let $\gc\colon[-1,0]\to[-1,0]\x[0,0,\!5]$ be a smooth injective convex curve with $\gc(t)=(t,0,\!5)$ for $t<-0,\!9$ and $\gc(t)=(0,-t)$ for $t>-0,\!1$. Now we define the desired $\mathscr{R}$-bordism $M\x[0,1]=\cup_{i=1}^k\hat{A}_i$ by (see figure \ref{red}) $\hat{A}_i:=A_i\x[0,1]$ for $i\neq j$ and
\[
\hat{A}_j:=\left\{(x,s)\in A_j\x[0,1]\, |\, s\leq\begin{cases} 0,\!5     &\text{ if }x\in A_j-(\partial A_j\x[-1,0])\\
                                             \gc(t)_2&\text{ if }x=(y,t)\in\partial A_j\x[-1,0)\\
                                             {\displaystyle\max_{\{t\, |\, \gc(t)_1=0\}}} \gc(t)_2&\text{ if }x\in\partial A_j\x\{0\} \end{cases}\right\}.
\]

\end{proof}

\begin{figure}
\psset{xunit=.5pt,yunit=.5pt,runit=.5pt}
\begin{pspicture}(150,360)(600,675)
{
\newrgbcolor{curcolor}{0 0 0}
\pscustom[linewidth=0.99999996,linecolor=curcolor]
{
\newpath
\moveto(458.77716819,417.20580932)
\curveto(458.77716716,406.11961479)(416.24412398,397.1324786)(363.77695732,397.13247882)
\curveto(311.40105739,397.13247903)(268.90591557,406.08954007)(268.77703779,417.15641665)
}
}
{
\newrgbcolor{curcolor}{0 0 0}
\pscustom[linewidth=0.99999996,linecolor=curcolor]
{
\newpath
\moveto(458.77695819,631.04824932)
\curveto(458.77695716,619.96205479)(416.24391398,610.9749186)(363.77674732,610.97491882)
\curveto(311.40084739,610.97491903)(268.90570557,619.93198007)(268.77682779,630.99885665)
}
}
{
\newrgbcolor{curcolor}{0 0 0}
\pscustom[linewidth=0.99999996,linecolor=curcolor]
{
\newpath
\moveto(268.77737654,630.96401972)
\curveto(268.5572183,642.05011666)(310.91141268,651.07488496)(363.37811743,651.1214039)
\curveto(415.84482219,651.16792284)(458.55596359,642.21857665)(458.77612183,631.13247971)
\curveto(458.77707134,631.0846671)(458.77721237,631.03685393)(458.77654492,630.98904111)
}
}
{
\newrgbcolor{curcolor}{0 0 0}
\pscustom[linewidth=1,linecolor=curcolor]
{
\newpath
\moveto(458.77675,632.12158262)
\lineto(458.77675,417.12158262)
}
}
{
\newrgbcolor{curcolor}{0 0 0}
\pscustom[linewidth=1,linecolor=curcolor]
{
\newpath
\moveto(268.77675,632.12158262)
\lineto(268.77675,417.12158262)
}
}
{
\newrgbcolor{curcolor}{0 0 0}
\pscustom[linewidth=1,linecolor=curcolor]
{
\newpath
\moveto(303.77675,401.82158)
\curveto(303.77675,501.82158)(303.77675,501.82158)(338.77675,501.82158)
\lineto(388.77675,501.82158)
}
}
{
\newrgbcolor{curcolor}{0 0 0}
\pscustom[linewidth=1,linecolor=curcolor]
{
\newpath
\moveto(423.77675,401.82158)
\curveto(423.77675,501.82158)(423.77675,501.82158)(388.77675,501.82158)
}
}
{
\newrgbcolor{curcolor}{0.80000001 0.80000001 0.80000001}
\pscustom[linestyle=none,fillstyle=solid,fillcolor=curcolor]
{
\newpath
\moveto(349.34999,397.95433262)
\curveto(334.0154,398.49176262)(318.5699,399.87093262)(306.93928,401.74129262)
\lineto(304.43928,402.14333262)
\lineto(304.46268,423.56988262)
\curveto(304.50708,464.14860262)(305.42254,478.92808262)(308.47238,488.30001262)
\curveto(311.30432,497.00237262)(315.85851,500.18935262)(326.58212,500.97301262)
\curveto(329.99908,501.22272262)(397.45038,501.21939262)(400.93843,500.96901262)
\curveto(411.6482,500.20126262)(416.1463,497.09484262)(418.99368,488.49990262)
\curveto(422.15381,478.96090262)(423.09567,463.50841262)(423.10383,421.06753262)
\lineto(423.10783,402.13896262)
\lineto(421.18444,401.82828262)
\curveto(408.94874,399.85187262)(393.75249,398.50013262)(377.53179,397.94528262)
\curveto(371.60567,397.74257262)(355.23859,397.74763262)(349.35033,397.95428262)
\closepath
}
}
{
\newrgbcolor{curcolor}{0 0 0}
\pscustom[linewidth=0.17857143,linecolor=curcolor]
{
\newpath
\moveto(349.34999,397.95433262)
\curveto(334.0154,398.49176262)(318.5699,399.87093262)(306.93928,401.74129262)
\lineto(304.43928,402.14333262)
\lineto(304.46268,423.56988262)
\curveto(304.50708,464.14860262)(305.42254,478.92808262)(308.47238,488.30001262)
\curveto(311.30432,497.00237262)(315.85851,500.18935262)(326.58212,500.97301262)
\curveto(329.99908,501.22272262)(397.45038,501.21939262)(400.93843,500.96901262)
\curveto(411.6482,500.20126262)(416.1463,497.09484262)(418.99368,488.49990262)
\curveto(422.15381,478.96090262)(423.09567,463.50841262)(423.10383,421.06753262)
\lineto(423.10783,402.13896262)
\lineto(421.18444,401.82828262)
\curveto(408.94874,399.85187262)(393.75249,398.50013262)(377.53179,397.94528262)
\curveto(371.60567,397.74257262)(355.23859,397.74763262)(349.35033,397.95428262)
\closepath
}
}
{
\newrgbcolor{curcolor}{0 0 0}
\pscustom[linewidth=0.99999996,linecolor=curcolor,linestyle=dashed,dash=2.99451197 5.98902393]
{
\newpath
\moveto(268.77758654,417.12157972)
\curveto(268.5574283,428.20767666)(310.91162268,437.23244496)(363.37832743,437.2789639)
\curveto(415.84503219,437.32548284)(458.55617359,428.37613665)(458.77633183,417.29003971)
\curveto(458.77728134,417.2422271)(458.77742237,417.19441393)(458.77675492,417.14660111)
}
}
{
\newrgbcolor{curcolor}{0 0 0}
\pscustom[linewidth=0.66234878,linecolor=curcolor]
{
\newpath
\moveto(419.99999515,396.28374293)
\curveto(406.49999522,377.85812198)(370.49999541,402.42561453)(363.74999545,383.99999973)
}
}
{
\newrgbcolor{curcolor}{0 0 0}
\pscustom[linewidth=0.66234878,linecolor=curcolor]
{
\newpath
\moveto(307.49999575,396.28374293)
\curveto(320.99999568,377.85812198)(356.99999549,402.42561453)(363.74999545,383.99999973)
}
}
{
\newrgbcolor{curcolor}{0 0 0}
\pscustom[linewidth=1,linecolor=curcolor]
{
\newpath
\moveto(495,417.00000262)
\lineto(469.616,417.00000262)
}
}
{
\newrgbcolor{curcolor}{0 0 0}
\pscustom[linestyle=none,fillstyle=solid,fillcolor=curcolor]
{
\newpath
\moveto(465,417.00000262)
\lineto(471.92,421.00000262)
\lineto(471.92,413.00000262)
\lineto(465,417.00000262)
\closepath
}
}
{
\newrgbcolor{curcolor}{0 0 0}
\pscustom[linewidth=1,linecolor=curcolor]
{
\newpath
\moveto(465,417.00000262)
\lineto(471.92,421.00000262)
\lineto(471.92,413.00000262)
\lineto(465,417.00000262)
\closepath
}
}
{
\newrgbcolor{curcolor}{0 0 0}
\pscustom[linewidth=1,linecolor=curcolor]
{
\newpath
\moveto(495,520.00000262)
\lineto(373.78165,465.54191262)
}
}
{
\newrgbcolor{curcolor}{0 0 0}
\pscustom[linestyle=none,fillstyle=solid,fillcolor=curcolor]
{
\newpath
\moveto(369.57104907,463.65027423)
\lineto(374.24410039,470.13479359)
\lineto(377.52250314,462.83739163)
\lineto(369.57104907,463.65027423)
\closepath
}
}
{
\newrgbcolor{curcolor}{0 0 0}
\pscustom[linewidth=1,linecolor=curcolor]
{
\newpath
\moveto(369.57104907,463.65027423)
\lineto(374.24410039,470.13479359)
\lineto(377.52250314,462.83739163)
\lineto(369.57104907,463.65027423)
\closepath
}
}
{
\newrgbcolor{curcolor}{0 0 0}
\pscustom[linewidth=0.97098178,linecolor=curcolor]
{
\newpath
\moveto(495.3985,630.75053262)
\lineto(469.98992,630.75053262)
}
}
{
\newrgbcolor{curcolor}{0 0 0}
\pscustom[linestyle=none,fillstyle=solid,fillcolor=curcolor]
{
\newpath
\moveto(465.50786812,630.75053262)
\lineto(472.22706201,634.63445972)
\lineto(472.22706201,626.86660551)
\lineto(465.50786812,630.75053262)
\closepath
}
}
{
\newrgbcolor{curcolor}{0 0 0}
\pscustom[linewidth=0.97098178,linecolor=curcolor]
{
\newpath
\moveto(465.50786812,630.75053262)
\lineto(472.22706201,634.63445972)
\lineto(472.22706201,626.86660551)
\lineto(465.50786812,630.75053262)
\closepath
}
}
{
\newrgbcolor{curcolor}{0 0 0}
\pscustom[linestyle=none,fillstyle=solid,fillcolor=curcolor]
{
\newpath
\moveto(307.8,401.79999652)
\curveto(307.8,399.59085752)(306.009139,397.79999652)(303.8,397.79999652)
\curveto(301.590861,397.79999652)(299.8,399.59085752)(299.8,401.79999652)
\curveto(299.8,404.00913552)(301.590861,405.79999652)(303.8,405.79999652)
\curveto(306.009139,405.79999652)(307.8,404.00913552)(307.8,401.79999652)
\closepath
}
}
{
\newrgbcolor{curcolor}{0 0 0}
\pscustom[linestyle=none,fillstyle=solid,fillcolor=curcolor]
{
\newpath
\moveto(427.7,401.89999652)
\curveto(427.7,399.69085752)(425.909139,397.89999652)(423.7,397.89999652)
\curveto(421.490861,397.89999652)(419.7,399.69085752)(419.7,401.89999652)
\curveto(419.7,404.10913552)(421.490861,405.89999652)(423.7,405.89999652)
\curveto(425.909139,405.89999652)(427.7,404.10913552)(427.7,401.89999652)
\closepath
}
}
\rput(608,629){$M$ without $P_j$-part}
\rput(520,525){$\hat{A}_j$}
\rput(520,415){$M$}
\rput(366,366){$A_j$}
\end{pspicture}
\caption{Redundant $A_j$-part}\label{red}\end{figure}
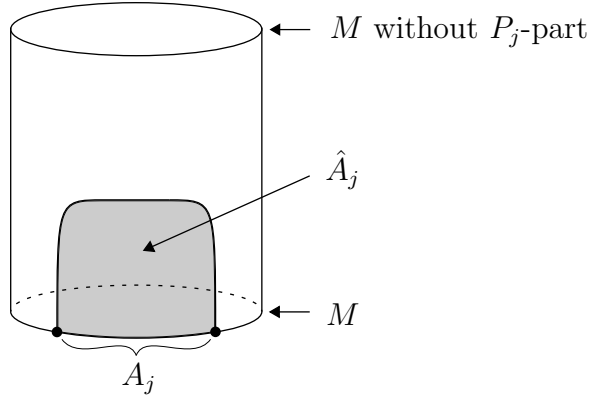
Applying this statement $(k-1)$-times to $M_1$ it follows that $M_1$ becomes a $P_1$-manifold. As noted above any permutation of $P_1,P_2,\ldots,P_k$ is again a regular sequence. Therefore we may repeat the above surgery procedure applied to the $\mathscr{R}$-manifold $M_{>1}$. This yields an $\mathscr{R}$-bordism between $M_{>1}$ on the one hand and a $P_2$-manifold $M_2$ resp.~ an $\mathscr{R}$-manifold $M_{>2}$ with empty $P_1$- and $P_2$-parts on the other. In this fashion we obtain an $\mathscr{R}$-bordism between $M$ and a disjoint union
\begin{equation}\label{dis}
(P_1\x Q_1)\,\dot{\cup}\ldots\dot{\cup}\,(P_k\x Q_k),
\end{equation}
where each $P_i \x Q_i$ is as $\mathscr{R}$-manifold a $P_i$-manifold.

To complete the induction step we have to show that if $\iota_*^{\mathcal{R}}[M]=0$ in $MG_*$ then $[M]=0$ in $\mathcal{R}_*$. Note again that any permutation of $P_1,\ldots,P_k$ is regular. In $MG_*$ we observe the following: Since $M$ is zero bordant it follows from \ref{dis} that for all $1\leq j\leq k$
\[
\sum_i[P_i\x Q_i]=0 \mod ([P_1],\ldots,[\widehat{P_j}],\ldots,[P_k]),
\]
hence $[Q_j]\in ([P_1],\ldots,[\widehat{P_j}],\ldots,[P_k])$. Then we conclude that for all $1\leq s,t\leq k$ there exists a closed manifold $Q_{st}$ such that $M$ is bordant to $\sum_{s,t}P_{\{s,t\}}\x Q_{st}$. Furthermore, for all $1\leq j,l\leq k$ we look at
\[
\sum_{s,t}[P_{\{s,t\}}\x Q_{st}]=0 \mod ([P_1],\ldots,[\widehat{P_j}],\ldots,[\widehat{P_l}],\ldots,[P_k])
\]
and conclude $[Q_{st}]\in ([P_1],\ldots,[\widehat{P_j}],\ldots,[\widehat{P_l}],\ldots,[P_k])$ for $\{s,t\}=\{j,l\}$. Proceeding in this fashion we find a closed manifold $Q$ such that $M$ is bordant to $P_1\x\ldots\x P_k\x Q$.

Now we turn to $\mathcal{R}_*$. If we remember the $P_i$-factors in \ref{dis} it follows from the above observation that there are closed manifolds $R_i$ such that $M$ is $\mathscr{R}$-bordant to
\begin{equation}\label{spec}
(\overline{P_1}\x\ldots\x P_k\x R_1) \,\dot{\cup}\ldots\dot{\cup}\,(P_1\x\ldots\x\overline{P_k}\x R_k),
\end{equation}
where $\overline{P_1}\x\ldots\x P_k\x R_1$ denotes the \emph{$P_1$-manifold} $P_1\x\ldots\x P_k\x R_1$ etc. It is not difficult to see that the specification of the $P_i$'s in \ref{spec} is immaterial. In fact, $\overline{P_i}\x P_j$ is $\mathscr{R}$-bordant to $P_i\x\overline{P_j}$ by means of the $\mathscr{R}$-bordism $[0,1]\x P_i\x P_j$ with $\left[0,\frac{2}{3}\right]\x P_i\x P_j$ as the $P_i$-part and $\left[\frac{1}{3},1\right]\x P_i\x P_j$ as the $P_j$-part.

We conclude that $M$ is $\mathscr{R}$-bordant to, say, $\overline{P_1}\x\ldots\x P_k\x Q$. Now, since $[P_1\x\ldots\x P_k]$ is not a zero divisor it follows that $Q$ is zero bordant, i.e.~ $Q=\partial R$ for some $R$. The $P_1$-manifold $\overline{P_1}\x\ldots\x P_k\x R$ is the required zero $\mathscr{R}$-bordism for $M$. This finishes the induction step.
\end{proof}

\begin{rem} Our construction of $\mathcal{P}_*(\_)$ and Prop. \ref{inj} immediately extend to smooth manifolds with other reductions of their structure groups.\end{rem}

Now we show how one obtains the desired description of $\widehat{MSO}$ and $\widehat{MSpin}$. We consider all spectra and groups after inverting $2$. Let us start with the oriented case. By a classical result of Milnor \cite{milpol} there are closed oriented manifolds $Q_1,Q_2,\ldots$, $\dim Q_i=4i$, such that
\begin{equation}\label{msop}
MSO_*\cong\mathbb{Z}{\textstyle \left[\frac{1}{2}\right]}[[Q_1],[Q_2],\ldots]
\end{equation}
(cf.~ also \cite[p.~ 180]{stong}). For example, we can take complex projective spaces and hypersurfaces of degree $(1,1)$ in $\mathbb{C}P^n\x\mathbb{C}P^m$ as generators. One concludes that the kernel of $u_*\colon MSO_*\to H\mathbb{Z}_*$ coincides with the ideal $([Q_1],[Q_2],\ldots)$. We denote the spectrum associated to the bordism spanned by $\{Q_1,Q_2,\ldots\}$ by $\mathcal{P}^u$. In view of \ref{msop} the sequence $Q_1,Q_2,\ldots$ is regular.

\begin{prop} There is a canonical homotopy equivalence $\mathcal{P}^u\simeq\widehat{MSO}$.\end{prop}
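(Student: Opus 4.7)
The plan is to construct $\tilde\iota\colon\mathcal{P}^u\to\widehat{MSO}$ as the canonical lift of the forgetful spectrum map $\iota\colon\mathcal{P}^u\to MSO$ through $i\colon\widehat{MSO}\to MSO$, and then verify that it is a weak equivalence degreewise.

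First, forgetting the $\mathscr{P}$-structure defines a natural transformation $\mathcal{P}^u_*(-)\to MSO_*(-)$ of homology theories, represented by a spectrum map $\iota\colon\mathcal{P}^u\to MSO$. A lift $\tilde\iota$ through $i$ exists iff $u\circ\iota\simeq 0$, and is then canonical (unique up to homotopy) iff additionally $[\mathcal{P}^u,\Omega H\Z]=0$. Both vanishing statements follow from $\pi_0\mathcal{P}^u=0$: any closed $0$-dimensional $\mathscr{P}$-manifold $M$ would need its interior covered by subsets $A_i\cong Q_i\times B_i$ of dimension at least $\dim Q_i=4i>0$, forcing $M=\emptyset$. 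Since $\mathcal{P}^u$ is connective, the universal coefficient theorem yields $[\mathcal{P}^u,H\Z]\cong\mathrm{Hom}(\pi_0\mathcal{P}^u,\Z[\tfrac{1}{2}])=0$ and, analogously, $[\mathcal{P}^u,\Omega H\Z]=0$. This produces $\tilde\iota$ canonically.

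Second, I verify that $\tilde\iota$ induces isomorphisms on homotopy groups. The long exact sequence of the fibration $\widehat{MSO}\to MSO\xto{u}H\Z$ collapses, after inverting $2$, into short exact sequences $0\to\widehat{MSO}_n\to MSO_n\xto{u_n}H\Z_n\to 0$, using that $H\Z_n=0$ for $n\ne 0$ and $u_0\colon MSO_0\to H\Z_0$ is the identity of $\Z[\tfrac{1}{2}]$; hence $i_*\colon\widehat{MSO}_n\xto{\cong}\ker u_n\subseteq MSO_n$. By \ref{msop}, $\ker u_*$ coincides with the ideal $([Q_1],[Q_2],\ldots)\subset MSO_*$. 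On the other side, Proposition \ref{inj} gives injectivity of $\iota_*\colon\mathcal{P}^u_n\hookrightarrow MSO_n$, and its image lies in $\ker u_n$ by the first step. For the reverse inclusion, every generator $[Q_j]\cdot[X]$ of $([Q_i])$ is realized in $\mathcal{P}^u_*$ by the $\mathscr{P}$-manifold $Q_j\times X$ with trivial structure $A_j:=Q_j\times X$ and $A_i:=\emptyset$ for $i\ne j$. Hence $\iota_*$ identifies $\mathcal{P}^u_n$ with $\ker u_n$ compatibly with $i_*$, so $\tilde\iota_*$ is an isomorphism.

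The main obstacle is establishing $u\circ\iota\simeq 0$ canonically without constructing a geometric nullhomotopy. A direct geometric argument would require a relative version of the surgery decomposition in Proposition \ref{inj}: rewriting $(M,f)\in\mathcal{P}^u_n(X)$ as a disjoint union $\bigsqcup_i(Q_i\times Y_i,g_i\circ\mathrm{pr}_{Y_i})$ and invoking $\dim Y_i<n$ to force $(g_i\circ\mathrm{pr}_{Y_i})_*[Q_i\times Y_i]=0$ in $H_n(X;\Z[\tfrac{1}{2}])$. This is technically delicate; the abstract connectivity observation $\pi_0\mathcal{P}^u=0$ bypasses it cleanly.
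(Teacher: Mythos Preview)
Your proof is correct and follows essentially the same route as the paper: lift $\iota$ through the fiber sequence using the vanishing of $\widetilde{H}^0(\mathcal{P}^u;\Z[\tfrac12])$ (which the paper states without unpacking, whereas you spell out the existence and uniqueness obstructions via $\pi_0\mathcal{P}^u=0$), then compare coefficients using Proposition~\ref{inj} for injectivity and the explicit $Q_j\times X$ for surjectivity. The only thing you leave implicit that the paper mentions is the final appeal to Whitehead's theorem to upgrade the weak equivalence to a homotopy equivalence of CW-spectra.
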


\begin{proof}
The natural transformation of homology theories $\iota_*\colon\mathcal{P}^u_*(\_)\to MSO_*(\_)$ corresponds to a spectrum map $\iota\colon\mathcal{P}^u\to MSO$. Consider the lifting problem
\begin{equation*}
\xymatrix{
                                                                & \widehat{MSO} \ar[d]_-i         &\\
\mathcal{P}^u\ar[r]^-{\iota}\ar@{-->}[ru]  & MSO\ar[r]^-u                          & H\Z.
}
\end{equation*}
Since $\widetilde{H}^0(\mathcal{P}^u;\Z\left[\frac{1}{2}\right])=0$ there exists a unique (up to homotopy) solution $h\colon\mathcal{P}^u\to\widehat{MSO}$. On the one hand $i_*\colon\widehat{MSO}_n\to MSO_n$ is an isomorphism for all $n>0$ and $\widehat{MSO}_0=0$. On the other hand $\iota_*\colon\mathcal{P}^u_n\to MSO_n$ is obviously surjective for all $n>0$ and, following Lemma \ref{inj}, $\iota_*$ is injective. In addition, $\mathcal{P}^u_0=0$. It follows that $h_*\colon\mathcal{P}^u_n\to\widehat{MSO}_n$ is an isomorphism for all $n\geq 0$. All spectra involved are $CW$-spectra and thus, according to Whitehead's theorem, $h$ is a homotopy equivalence.
\end{proof}

Now we turn to the spin case. One proceeds like in the oriented case, however, solving the corresponding lifting problem requires some more work. There are closed spin manifolds $R_1,R_2,\ldots$, $\dim R_i=4i$, such that
\[
MSpin_*\cong\mathbb{Z}{\textstyle \left[\frac{1}{2}\right]}[[R_1],[R_2],\ldots].
\]
As one can show \cite[Sec.~ 4]{hpeh}, $R_1,R_2,\ldots$ can be chosen such that $R_i$ is the total space of a $\mathbb{H}P^2$-bundle for $i\geq 2$ and the kernel of $a_*\colon MSpin_*\to ko_*$ coincides with the ideal $([R_2],[R_3],\ldots)$. Similar as above we denote the spectrum associated to the bordism spanned by the \emph{regular} sequence $R_2,R_3,\ldots$ by $\mathcal{P}^a$.

\begin{prop}\label{spinprop}  There is a canonical homotopy equivalence $\mathcal{P}^a\simeq\widehat{MSpin}$.\end{prop}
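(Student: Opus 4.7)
The plan is to imitate the proof for the oriented case. The natural transformation $\iota_*\colon\mathcal{P}^a_*(\_)\to MSpin_*(\_)$ corresponds to a spectrum map $\iota\colon\mathcal{P}^a\to MSpin$, and I consider the lifting problem
\[
\xymatrix{
& \widehat{MSpin} \ar[d]^-i & \\
\mathcal{P}^a \ar[r]^-\iota \ar@{-->}[ur]^-h & MSpin \ar[r]^-a & ko.
}
\]
A lift $h$ exists once the obstruction $a\circ\iota\in [\mathcal{P}^a, ko]$ vanishes, and is unique up to homotopy once $[\mathcal{P}^a, \Omega ko] = ko^{-1}(\mathcal{P}^a) = 0$. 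Unlike in the oriented case, where $\widetilde{H}^0(\mathcal{P}^u;\Z[\tfrac{1}{2}])=0$ for cheap connectivity reasons, these $ko$-cohomology groups are not obviously zero: the Atiyah-Hirzebruch spectral sequence picks up contributions from $H^p(\mathcal{P}^a;\Z[\tfrac{1}{2}])$ in degrees $p\geq 8$ (note $\mathcal{P}^a$ is $7$-connected since $\dim R_i\geq 8$), and these cohomology groups need not vanish. This accounts for the ``more work'' in the spin setting and is the main obstacle.

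To produce the required null-homotopy of $a\circ\iota$, I plan to exploit the fact that each $R_i$, $i\geq 2$, has been chosen as the total space of an $\HP^2$-bundle; this structure is inherited by the $R_i$-parts of every closed $\mathcal{P}^a$-manifold. The classical theorem of Stolz on $\HP^2$-bundles states that a spin manifold which is a total space of such a bundle has vanishing $ko$-orientation. I intend to promote this coefficient-level vanishing to a spectrum-level null-homotopy by factoring $\iota$ through an appropriate Thom-type spectrum of $\HP^2$-bundles whose composition with $a$ is null by construction. An analogous factorisation argument, combined with the connectivity of $\mathcal{P}^a$, will yield the required (homotopy) uniqueness of the lift.

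Once a lift $h$ has been constructed, I check that it is a homotopy equivalence on coefficients. Since $a_*\colon MSpin_*\to ko_*$ is surjective after inverting $2$, the fibre long exact sequence gives $i_*\colon\widehat{MSpin}_n\hto MSpin_n$ injective with image $\ker a_*$. By Prop.~\ref{inj}, $\iota_*$ is injective; its image is all of $\ker a_*$, as the latter is generated as an ideal by the $[R_i]$, each of which is represented by a closed $\mathcal{P}^a$-manifold of the form $R_i\x X$. Hence $h_* = i_*^{-1}\circ\iota_*$ is an isomorphism on all homotopy groups, and Whitehead's theorem for $CW$-spectra concludes the proof. The technical heart of the argument, and the place where real work beyond the oriented case is needed, is the construction of the spectrum-level null-homotopy via Stolz's $\HP^2$-bundle theorem.
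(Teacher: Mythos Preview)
Your diagnosis of the difficulty is correct: unlike the oriented case, neither $ko^0(\mathcal{P}^a)$ nor $ko^{-1}(\mathcal{P}^a)$ vanishes for connectivity reasons, so existence and uniqueness of the lift are genuine problems. However, your proposed resolution has a real gap. A closed $\mathcal{P}^a$-manifold $M$ is \emph{not} an $\HP^2$-bundle: only each piece $A_i\cong R_i\times B_i$ carries an $\HP^2$-fibration (coming from the $R_i$-factor), and on an overlap $A_{\{i,j\}}\cong R_i\times R_j\times B_{\{i,j\}}$ the $\HP^2$-fibres coming from $R_i$ and from $R_j$ are different. So there is no evident spectrum map from $\mathcal{P}^a$ to a Stolz-type transfer spectrum of $\HP^2$-bundles through which $\iota$ factors. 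What you actually have is the coefficient-level statement $(a\circ\iota)_*=0$ (every $\mathcal{P}^a$-class is bordant to a disjoint union of products $R_i\times Q_i$, each an $\HP^2$-bundle), but promoting this to a null-homotopy of $a\circ\iota$ is precisely the missing step, and your sketch does not supply it. The same issue undermines the uniqueness argument.

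The paper avoids this obstruction-theoretic problem entirely by passing to the cofibre. One replaces the lifting problem by the equivalent extension problem
\[
\xymatrix{
\mathcal{P}^a \ar[r]^-{\iota} & MSpin \ar[r]^-p \ar[d]^-a & \mathcal{C} \\
& ko \ar@{<--}[ru] &
}
\]
and proves directly that $\mathcal{C}\simeq ko$. Since $\iota_*$ is injective with image $([R_2],[R_3],\ldots)$, one has $\mathcal{C}_*=\Z[\tfrac12][p(R_1)]$. Periodising, $\mathcal{C}[p(R_1)^{-1}]_*\cong\Z[\tfrac12][p(R_1)^{\pm1}]$ as an $MSpin_*$-module, which is isomorphic to $KO_*$ via a unique $MSpin_*$-module isomorphism. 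The real Conner--Floyd theorem (together with the odd-primary equivalence $MSpin\simeq MSp$) then upgrades this to an equivalence of homology theories $KO_*(\_)\simeq\mathcal{C}[p(R_1)^{-1}]_*(\_)$, hence a homotopy equivalence $KO\simeq\mathcal{C}[p(R_1)^{-1}]$. Finally, since $\mathcal{C}$ is connective and $ko\to KO$ is an isomorphism on non-negative homotopy, the composite $\mathcal{C}\to\mathcal{C}[p(R_1)^{-1}]\simeq KO$ lifts uniquely to $ko$, and this lift is the desired equivalence. The Conner--Floyd input is what replaces your hoped-for null-homotopy; it produces the comparison map without any obstruction theory on $\mathcal{P}^a$.
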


The lifting problem
\begin{equation*}
\xymatrix{
                                        & \widehat{MSpin} \ar[d]_-i         &\\
\mathcal{P}^a\ar[r]^-{\iota}\ar@{-->}[ru]  & MSpin\ar[r]^-a      & ko
}
\end{equation*}
is equivalent to the extension problem
\begin{equation*}
\xymatrix{
\mathcal{P}^a \ar[r]^-{\iota}   &   MSpin\ar[d]^-a\ar[r]^-p    &    \mathcal{C}  \\
                                             &    ko\ar@{<--}[ru]                &
}
\end{equation*}
where $\mathcal{C}$ denotes the homotopy cofiber of $\iota$.
\begin{prop}[Equiv.~ to Prop.~ \ref{spinprop}] There is a canonical homotopy equivalence $\mathcal{C}\simeq ko$.\end{prop}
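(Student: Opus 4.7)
The strategy is to construct $\bar a\colon\mathcal{C}\to ko$ extending $a$ along $p$, and then verify by Whitehead's theorem that $\bar a$ is a homotopy equivalence. The construction of $\bar a$ requires a null-homotopy of $a\circ\iota$; the verification requires computing $\pi_*\mathcal{C}$ and comparing it with $ko_*$.

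First I identify the image of $\iota_*$. By Prop.~\ref{inj}, $\iota_*\colon\mathcal{P}^a_*\to MSpin_*$ is injective. The surgery decomposition in its proof further shows that every closed $\mathscr{P}$-manifold is $\mathscr{P}$-bordant to a disjoint union $\bigsqcup_i R_i\times Q_i$, so $\iota_*(\mathcal{P}^a_*)\subseteq([R_2],[R_3],\ldots)$; the reverse inclusion is immediate since each $R_i\times N$ is an $R_i$-manifold in its own right. By choice of the $R_i$ this ideal equals $\ker a_*$, so $a\circ\iota$ is zero on coefficients.

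To upgrade this to a spectrum-level null-homotopy, I use that after inverting $2$ all $k$-invariants of $ko$ vanish (they are $2$-primary), yielding a splitting
\[
ko\;\simeq\;\bigvee_{k\geq 0}\Sigma^{4k}H\Z\bigl[\tfrac{1}{2}\bigr]
\]
and hence an identification $[\mathcal{P}^a,ko]\cong\prod_{k\geq 0}H^{4k}\bigl(\mathcal{P}^a;\Z[\tfrac{1}{2}]\bigr)$. Since $\mathcal{P}^a_*$ sits as a $\Z[\tfrac{1}{2}]$-submodule of the polynomial ring $MSpin_*[\tfrac{1}{2}]\cong\Z[\tfrac{1}{2}][[R_1],[R_2],\ldots]$, it is a free $\Z[\tfrac{1}{2}]$-module. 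A Postnikov/obstruction-theoretic argument (or equivalently, the degeneration of the Atiyah--Hirzebruch spectral sequence for $[\mathcal{P}^a,ko]$ together with the universal coefficient theorem) shows that the cohomology classes corresponding to $a\circ\iota$ are detected by their induced homotopy homomorphisms, which vanish by the preceding paragraph. Hence $a\circ\iota\simeq *$ and the extension $\bar a\colon\mathcal{C}\to ko$ exists.

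Finally, the cofiber Puppe sequence of $\mathcal{P}^a\xrightarrow{\iota}MSpin\xrightarrow{p}\mathcal{C}$, combined with the injectivity of $\iota_*$, collapses to short exact sequences $0\to\mathcal{P}^a_n\to MSpin_n\to\mathcal{C}_n\to 0$. The identification $\iota_*\mathcal{P}^a_*=\ker a_*$ together with the surjectivity of $a_*\colon MSpin_*[\tfrac{1}{2}]\to ko_*[\tfrac{1}{2}]$ then presents $\mathcal{C}_n\cong MSpin_n/\ker(a_*)_n\cong ko_n$, with the isomorphism induced by $\bar a_*$. Both spectra being of CW type, Whitehead's theorem promotes $\bar a$ to a homotopy equivalence. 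The principal obstacle is the spectrum-level null-homotopy in the middle step: while the coefficient identity $a_*\iota_*=0$ is formal, lifting it genuinely requires the $2$-inverted Eilenberg--MacLane decomposition of $ko$ together with torsion-freeness of $\pi_*\mathcal{P}^a$, precisely the features that make the $2$-inverted setting work.
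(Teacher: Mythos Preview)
Your argument contains a genuine error in the middle step: the claim that after inverting $2$ all $k$-invariants of $ko$ vanish, giving a splitting $ko\simeq\bigvee_{k\ge 0}\Sigma^{4k}H\Z[\tfrac12]$, is false. The $k$-invariants of the \emph{integral} Postnikov tower of $ko$ that involve the $\Z/2$-layers $\pi_1,\pi_2$ are indeed $2$-primary, but once these layers are killed the remaining tower (with $\Z[\tfrac12]$ in degrees $0,4,8,\ldots$) acquires \emph{new} $k$-invariants, and these are not $2$-primary. Concretely, at $p=3$ one has $ko_{(3)}\simeq\ell=BP\langle 1\rangle$, whose mod-$3$ cohomology is the cyclic $A_3$-module $A_3/A_3(Q_0,Q_1)$; this is not a direct sum of shifted copies of $A_3/A_3Q_0=H^*(H\Z;\Z/3)$, so $ko_{(3)}$ is not a generalized Eilenberg--MacLane spectrum. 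Equivalently, the first $k$-invariant of $ko_{(3)}$, lying in $H^5(H\Z_{(3)};\Z_{(3)})$, is the nonzero class detected by $\beta P^1$. Consequently your identification $[\mathcal{P}^a,ko]\cong\prod_k H^{4k}(\mathcal{P}^a;\Z[\tfrac12])$ fails, and the passage from $a_*\iota_*=0$ on coefficients to a spectrum-level null-homotopy $a\circ\iota\simeq *$ is unjustified.

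This is precisely the point where the paper remarks that ``solving the corresponding lifting problem requires some more work'' in contrast to the oriented case (where the target $H\Z$ really is an Eilenberg--MacLane spectrum and your style of argument succeeds). The paper circumvents the obstruction by working in the other direction: it first periodicises to $\mathcal{C}[p(R_1)^{-1}]$, observes that as an $MSpin_*$-module its coefficient ring is isomorphic to $KO_*$, and then invokes the real Conner--Floyd theorem $MSp_*(X)\otimes_{MSp_*}KO_*\cong KO_*(X)$ (together with $MSpin\simeq MSp$ after inverting $2$) to promote this coefficient isomorphism to a natural equivalence $KO_*(\_)\simeq\mathcal{C}[p(R_1)^{-1}]_*(\_)$. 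A connective-cover lifting then yields the desired map $\mathcal{C}\to ko$. Your computation of $\pi_*\mathcal{C}$ via the cofiber sequence is correct and is also used in the paper; only the construction of the comparison map needs to be replaced.
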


\begin{rem} One can show that the homology theory associated to $\mathcal{C}$ admits a description by means of smooth spin manifolds which carry a $\mathscr{P}^a$-structure on their \emph{boundary}. Hence we recover a geometric description of $ko_*(\_)\left[\frac{1}{2}\right]$.\end{rem}

\begin{proof}
The idea is that we find an isomorphism on the level of coefficients and then apply the Conner-Floyd theorem to obtain a spectrum map. A similar argument can be found in \cite[p.~ 597]{landweber}. In order to proceed in this way one has to turn to periodic theories first.

Analogous to the oriented case the image of $\mathcal{P}^a$ under $\iota_*$ coincides with the ideal $([R_2],[R_3],\ldots)$. Hence we have $\mathcal{C}_*=\Z \left[\frac{1}{2}\right][p(R_1)]$. Consider now the $MSpin$-module spectrum $\mathcal{C}[p(R_1)^{-1}]$. As module over $MSpin_*$ one concludes that $\mathcal{C}[p(R_1)^{-1}]_*$ is isomorphic to $\Z \left[\frac{1}{2}\right][p(R_1),p(R_1)^{-1}]$.

As a generator $R_1$ of $MSpin_4$ one can use a K3 surface of signature 16. Let $\omega_4\in ko_4$ denote the image of $[R_1]\in MSpin_4$ under the orientation map $a_*\colon MSpin_*\to ko_*$. By Bott periodicity one has $KO_*=\Z \left[\frac{1}{2}\right][\omega_4,\omega_4^{-1}]$ and hence $ko_*=\Z \left[\frac{1}{2}\right][\omega_4]$. It follows that there is a unique $MSpin_*$-module isomorphism $\phi\colon KO_*\to \mathcal{C}[p(R_1)^{-1}]_*$.

Now let $MSp$ denote the symplectic Thom spectrum. The real Connor-Floyd theorem \cite{cf} states that
\[
\mu\colon MSp_*(X)\otimes_{MSp_*}KO_* \to KO_*(X),
\]
induced by the $MSp$-module structure of $KO$, is an isomorphism. After inverting $2$ one can show that there is a natural equivalence $MSpin\simeq MSp$. Consider now
\begin{equation}\label{cslp}
\xymatrix{
MSpin_*(X)\ot_{MSpin_*}KO_*  \ar[d]^-{\id\ot\phi}  \ar[r]^-{\mu}               &   KO_*(X)  \\
MSpin_*(X)\ot_{MSpin_*}\mathcal{C}[p(R_1)^{-1}]_*  \ar[r]^-{\eta}  &  \mathcal{C}[p(R_1)^{-1}]_*(X),
}
\end{equation}
where $\eta$ is induced by the $MSpin$-module structure of $\mathcal{C}[p(R_1)^{-1}]$. It follows that the natural transformation of homology theories
\[
\eta\circ(\id\ot\phi)\circ\mu^{-1}\colon KO_*(\_)\to\mathcal{C}[p(R_1)^{-1}]_*(\_)
\]
is an equivalence. Hence there exists a homotopy equivalence $KO\simeq\mathcal{C}[p(R_1)^{-1}]$. The periodization map $ko\to KO$ induces isomorphisms on non-negative homotopy groups and $\mathcal{C}$ is a connective spectrum. One concludes that the lifting problem
\begin{equation*}
\xymatrix{
                                                 &                                                        & ko\ar[d]   \\
\mathcal{C}\ar[r]\ar@{-->}[rru]  &  \mathcal{C}[p(R_1)^{-1}]\ar[r]^-{\simeq}  & KO
}
\end{equation*}
has a unique (up to homotopy) solution $h\colon\mathcal{C}\to ko$ which clearly induces isomorphisms on homotopy groups.
\end{proof}

\begin{rem} It is not possible to improve these methods in the sense that one could pass from $ko$ to $KO$ in Thm.~ \ref{sj}. In fact, in \cite{dss} it is proved that there are manifolds in the kernel of the periodization map $ko_*(X)\to KO_*(X)$ which does not admit a pscm.\end{rem}


\section{Positive Scalar Curvature}\label{pscm}
Let $\mathscr{P}=\{P_1,P_2,\ldots\}$ be a family of smooth closed manifolds. Now we shall prove our geometric result that a $\mathscr{P}$-manifold carries a pscm if all $P_i$ do. We note that neither orientability and spin structures nor regularity of the sequence $P_1,P_2,\ldots$ are needed.

\begin{thm}\label{pscmprop}  Let $\mathscr{P}=\{P_1,P_2,\ldots\}$ be a family of pscm manifolds. Then a $\mathscr{P}$-manifold, considered as a smooth manifold with additional structure, carries a pscm. \end{thm}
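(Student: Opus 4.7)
My plan is to build, by induction on $k=|\mathscr{P}|$, a Riemannian metric on $M$ that, in a neighborhood of every point $p$, is an orthogonal product one of whose factors has the form $\epsilon_i^2 g_i$ for some $i$ with $p\in\mathring{A}_i$ and some fixed pscm $g_i$ on $P_i$. Since $\operatorname{scal}(\epsilon_i^2 g_i+h)=\epsilon_i^{-2}\operatorname{scal}(g_i)+\operatorname{scal}(h)$ for an orthogonal product, the blow-up $\epsilon_i^{-2}$ will dominate any bounded negative contribution from the complementary directions once $\epsilon_i$ is chosen small enough.

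For the base case $k=1$, the covering condition forces $A_1=M$, hence $M\cong P_1\times B_1$ globally via $\phi_1$, and for small $\epsilon>0$ and any auxiliary metric $h$ on $B_1$ the product metric $\epsilon^2 g_1+h$ has pscm. For the inductive step, fix pscm metrics $g_i$ on each $P_i$ with $\operatorname{scal}(g_i)\geq 1$ and, imitating the collar construction from the proof of Proposition \ref{homt}, choose bicollar neighborhoods of the inner boundaries $\partial A_i$ that are simultaneously compatible with every local product structure $\phi_I$. Now build the metric stratum by stratum, starting at the deepest stratum $A_{\{1,\dots,k\}}=P_{\{1,\dots,k\}}\times B_{\{1,\dots,k\}}$ with the product $\sum_i \epsilon_i^2 g_i+h$, and extending through each compatible collar to shallower strata by keeping the relevant $P_I$-factor frozen and interpolating only in the $B_I$-directions. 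The nesting relation $\phi_J\circ\phi_I^{-1}(x,y)=(x,\phi_J^I(y))$ from Definition \ref{fdef} is exactly what guarantees that these product extensions glue consistently along overlapping collars.

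The main technical obstacle is arranging the interpolation on overlapping collars so that the respective $P_i$-fibres are preserved as orthogonal factors simultaneously. I propose to handle this by performing all cut-off interpolations purely in the base directions, never mixing the $P_i$-fibres, which is possible because the chosen collars are compatible with the $\phi_I$. Once this is achieved, at each point $p\in M$ the metric reads as an orthogonal product containing $\sum_{i\in I(p)}\epsilon_i^2 g_i$ as a factor, where $I(p):=\{\,i:p\in\mathring{A}_i\,\}\neq\emptyset$ by the covering condition, so that
\[
\operatorname{scal}(g)(p)\;\geq\;\sum_{i\in I(p)}\epsilon_i^{-2}\operatorname{scal}(g_i)\;-\;C,
\]
where $C$ is a constant depending only on the finitely many auxiliary base metrics and the chosen interpolations, and in particular not on the $\epsilon_i$. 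Compactness of $M$ and finiteness of the stratification then yield a uniform choice of sufficiently small $\epsilon_i$ making $\operatorname{scal}(g)>0$ globally, completing the induction.
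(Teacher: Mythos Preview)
Your strategy---build a metric that is everywhere an orthogonal product containing some $\epsilon_i^2 g_i$ factor, then shrink all $\epsilon_i$ simultaneously---is different from the paper's concordance argument, and it has a real gap at the claim that the constant $C$ is independent of the $\epsilon_i$. By your own compatibility requirement, the base metric $h_J$ on $B_J$ must restrict on each $P_{I\setminus J}\times B_I\subset B_J$ to $\sum_{i\in I\setminus J}\epsilon_i^2 g_i + h_I$; hence $h_J$ depends on $\epsilon_i$ for every $i\notin J$. Now consider a point $p$ just outside $A_i$ but still in $A_J$ (so $I(p)=J$): there $h_J$ must smoothly extend, across a collar of \emph{fixed} length, a metric whose $P_i$-direction has size $\epsilon_i$. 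Writing the $P_i$-factor as $f(t)^2 g_i$ with $f(0)=\epsilon_i$, the warped-product formula gives a contribution $-n(n-1)(f'/f)^2-2n\,f''/f$ to $\operatorname{scal}(h_J)$, and since $\int_0^1 (f'/f)\,dt=\log(1/\epsilon_i)$ the term $(f'/f)^2$ must reach at least $(\log(1/\epsilon_i))^2$ somewhere. So $\operatorname{scal}(h_J)$, and hence your $C$, blows up as $\epsilon_i\to 0$; the ``auxiliary base metrics and chosen interpolations'' cannot be fixed in advance of the $\epsilon_i$.

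The paper avoids this difficulty altogether. Instead of seeking uniform curvature control on the transition region, it orders the pieces $A'_k,\ldots,A'_1$ and glues inductively via Lemma~\ref{2con}: on $\partial A_j\cap A^{j+1}$ the already-constructed pscm $G^{j+1}$ and the new product metric $(\epsilon g_j)\times h_j$ differ only by the scaling of the $P_j$-factor, so they are \emph{isotopic through pscm}, hence concordant; the concordance is itself a pscm on the collar, and that is all that is needed. No uniform bound on transition curvature is ever asserted. If you want to salvage your route you would need either to carry out a careful warped-product analysis showing the logarithmic blow-up of $C$ is still dominated by the $\epsilon_j^{-2}$ terms from $I(p)$, or---more simply---to adopt the paper's ordered construction and its concordance step.
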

As a consequence we obtain a proof of Thm.~ \ref{mt} with $2$ inverted:
\begin{cor} For all spaces $X$ the kernels of $A\colon\Omega^{Spin}_*(X) \left[\frac{1}{2}\right]\to ko_*(X) \left[\frac{1}{2}\right]$ and $U\colon\Omega^{SO}_*(X) \left[\frac{1}{2}\right]\to H_*(X;\Z \left[\frac{1}{2}\right])$ are generated by manifolds which carry a pscm.\end{cor}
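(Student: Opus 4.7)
The plan is to combine the homotopy fibration long exact sequence with the identifications $\mathcal{P}^u\simeq\widehat{MSO}$ and $\mathcal{P}^a\simeq\widehat{MSpin}$ already established, and then apply Theorem \ref{pscmprop}. First I would observe that for any space $X$ the fibration $\widehat{MSO}\xto{i}MSO\xto{u}H\Z$ (working throughout with $2$ inverted) induces a long exact sequence of homology groups, so that
\[
\ker\bigl(U\colon\Omega^{SO}_n(X){\textstyle\left[\tfrac12\right]}\to H_n(X;\Z{\textstyle\left[\tfrac12\right]})\bigr)
=\im\bigl(I\colon\widehat{MSO}_n(X)\to MSO_n(X){\textstyle\left[\tfrac12\right]}\bigr),
\]
and analogously for the spin case with $\widehat{MSpin}$, $MSpin$ and $ko$. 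Transporting this along the equivalences $\mathcal{P}^u\simeq\widehat{MSO}$ and $\mathcal{P}^a\simeq\widehat{MSpin}$, I would conclude that $\ker U$ (resp.\ $\ker A$) is precisely the image of the forgetful map $\iota_*\colon \mathcal{P}^u_n(X)\to\Omega^{SO}_n(X)[\tfrac12]$ (resp.\ $\mathcal{P}^a_n(X)\to\Omega^{Spin}_n(X)[\tfrac12]$). In particular every element of the kernel is represented by the underlying smooth manifold of some $(M,f)$ where $M$ carries a $\mathscr{P}^u$- (resp.\ $\mathscr{P}^a$-)structure.

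Next I would verify that the chosen generating families consist of pscm manifolds. In the oriented case, take $Q_i$ to be the complex projective spaces $\mathbb{C}P^{2i}$ together with the degree $(1,1)$ hypersurfaces in $\mathbb{C}P^n\x\mathbb{C}P^m$ used in \cite{milpol}; both are Kähler manifolds with positive scalar curvature (the hypersurfaces are $\mathbb{C}P$-bundles over $\mathbb{C}P$, and the fiberwise Fubini--Study metrics can be rescaled to give pscm by Berard-Bergery's submersion construction, or alternatively \cite{gl}). In the spin case, by the construction of \cite[Sec.~4]{hpeh} recalled before Proposition \ref{spinprop}, each $R_i$ for $i\geq 2$ is the total space of an $\mathbb{H}P^2$-bundle, whose total space carries a pscm because $\mathbb{H}P^2$ does and its isometry group acts transitively (Berard-Bergery again), so pscm propagates to the total space of any such bundle.

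Now I would invoke Theorem \ref{pscmprop} with $\mathscr{P}=\mathscr{P}^u$ (resp.\ $\mathscr{P}=\mathscr{P}^a=\{R_2,R_3,\ldots\}$): since every member of $\mathscr{P}$ carries a pscm, every $\mathscr{P}$-manifold $M$ carries a pscm as a smooth manifold. Combining this with the identification of kernels in the previous paragraphs, every class in $\ker U$ or $\ker A$ has a representative $(M,f)$ whose underlying smooth manifold admits a pscm. This is exactly the statement of the corollary.

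The only genuinely non-formal step is the second one, namely choosing generating families of Milnor/Hanke--Pape--Eberhart-Hüttemann type that already consist of pscm manifolds; once those families are in place, the rest of the argument is a bookkeeping exercise combining the fibration exact sequence, the equivalences $\mathcal{P}^u\simeq\widehat{MSO}$ and $\mathcal{P}^a\simeq\widehat{MSpin}$, and Theorem \ref{pscmprop}. I do not expect a serious obstacle here, since both the choice of generators and the pscm property of $\mathbb{H}P^2$-bundles have already been arranged in the cited references \cite{milpol}, \cite{gl} and \cite{hpeh}.
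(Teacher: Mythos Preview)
Your proposal is correct and follows essentially the same approach as the paper: the paper's proof of the corollary also reduces to checking that the chosen generating families $\{Q_i\}$ and $\{R_i\}$ consist of pscm manifolds (the identification $\ker A=\im I$, $\ker U=\im I$ via the fibration and the equivalences $\mathcal{P}^u\simeq\widehat{MSO}$, $\mathcal{P}^a\simeq\widehat{MSpin}$ having already been set up in the introduction and Section~\ref{bt}), and then invokes Theorem~\ref{pscmprop}. Two small corrections: the reference \cite{hpeh} is Kreck--Stolz, not the name you wrote, and the paper justifies pscm on the bundle total spaces via the O'Neill formulas \cite[Prop.~9.70d, Thm.~9.59]{besse} rather than Kähler geometry or B\'erard-Bergery.
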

\begin{proof}
As mentioned above $\ker U$ is generated by projective spaces and hypersurfaces of degree $(1,1)$ in $\mathbb{C}P^n\x\mathbb{C}P^m$. In \cite{gl} it is explained why these manifolds carry a pscm: The standard Fubini-Study metric on $\CP^n$ is a pscm. Hypersurfaces of degree $(1,1)$ in $\CP^n\x\CP^m$ are projective space bundles over projective spaces, and with the induced metric the fibers being totally geodesic submanifolds. Hence the O`Neill formulas \cite[Prop.~ 9.70d]{besse} guarantee pscm on total spaces.

In \cite[Sec.~ 4]{hpeh} it is proved that $\ker A$ is generated by $\mathbb{H}P^2$-bundles with isometric action of the structure group. One concludes \cite[Thm.~ 9.59]{besse} that there exists metrics on the total spaces such that the fibers are also totally geodesic. Since $\mathbb{H}P^2$ carries a pscm the claim again follows by the O'Neill formulas.
\end{proof}

As a preliminary point, we recall that a \emph{collar metric} on a smooth manifold $M$ with boundary is a collar neighborhood $\partial M\x[0,1]$ together with a metric on $M$ which restricts to $g\x dt^2$ on $\partial M\x[0,1]$, where $g$ is some metric on $\partial M$ and $dt^2$ is the standard metric on $[0,1]$. Two pscm $g_0$ and $g_1$ on a closed manifold $M$ are called
\begin{itemize}
\item \emph{isotopic} if they lie in the same path component of the space of pscm on $M$ equipped with the $C^{\infty}$ topology,
\item \emph{concordant} if there exists a collar pscm $H$ on $M\x[0,1]$ such that $H|_{M\x\{0\}}=g_0$ and $H|_{M\x\{1\}}=g_1$.
\end{itemize}
It is well known that isotopy implies concordance.

With respect to products and scalar multiplication scalar curvature behaves as follows. If $\epsilon>0$ and $g_0$, $g_1$ are metrics on $M$, $N$ then $scal(\epsilon g_0)=\epsilon^{-1}scal(g_0)$ and $scal(g_0\x g_1)=scal(g_0)+scal(g_1)$. Hence, if $M$ and $N$ are compact and $N$ admits a pscm then $M\x N$ does.
\bigskip

The crucial step in the proof of Thm.~ \ref{pscmprop} is a simple concordance argument which can be easily demonstrated in the case of a $\mathscr{P}$-manifold $M$ consisting of two $P_i$-parts, i.e.~ $M=A_1\cup A_2$. Due to the definition of a $\mathscr{P}$-manifold there is a submanifold $Q\subset\partial B_{12}$ such that $\phi_1(\partial A_1)=P_1\x P_2\x Q$ and a submanifold $B_2'\subset B_2$ such that $\phi_2(A_2-\mathring{A}_1)=P_2\x B_2'$. In addition, $\partial A_1\hto A_2-\mathring{A}_1$ is induced by some diffeomorphism $\psi\colon P_1\x Q\to \partial B'_2$. Choose a pscm $g_1$ on $P_1$, a metric $h$ on $Q$ and extend $\psi_*(g_1\x h)$ to a collar metric $h_2$ on $B_2'$. Now take a pscm $g_2$ on $P_2$ such that $G_2:=g_2\x h_2$ is a pscm on $\phi_2(A_2-A_1)$. Next extend $g_2\x h$ to a collar metric $h_1$ on $B_1$. We find an $\epsilon>0$ small enough such that $G_1:=(\epsilon g_1)\x h_1$ is a pscm on $\phi_1(A_1)$. Note that $\phi_1^*(G_1)$ and $\phi_2^*(G_2)$ restricted to $\partial A_1$ are isotopic, thus concordant. To obtain the desired pscm on $M$ we use the concordance metric on $\partial A_1\x[-1,0]\subset A_1$ to join the pscm $\phi_1^*(G_1)$ restricted to $(A_1-(\partial A_1\x[-1,0])$ and the pscm $\phi_2^*(G_2)$ on $A'_2$ (see figure \ref{rho2}, we set $B'_1:=B_1-(\partial B_1\x[-1,0]))$.

\begin{figure}
\psset{xunit=.4pt,yunit=.4pt,runit=.4pt}
\begin{pspicture}(200,605)(540,930)
{
\newrgbcolor{curcolor}{0.7019608 0.7019608 0.7019608}
\pscustom[linestyle=none,fillstyle=solid,fillcolor=curcolor]
{
\newpath
\moveto(431.45693,621.03706262)
\curveto(428.21644,628.79601262)(425.77814,651.36566262)(424.02237,689.85370262)
\curveto(423.17432,708.44361262)(423.17437,771.08936262)(424.02245,790.11134262)
\curveto(425.83519,830.77008262)(428.71402,855.68216262)(432.07243,859.77203262)
\lineto(432.95166,860.84275262)
\lineto(440.02272,859.84847262)
\curveto(469.50041,855.70354262)(492.88319,850.36692262)(517.60397,842.14222262)
\curveto(575.9068,822.74464262)(614.90515,793.20149262)(626.29066,759.80676262)
\curveto(640.96983,716.75139262)(606.67802,672.89802262)(536.23975,644.64701262)
\curveto(508.48214,633.51414262)(475.80112,625.12423262)(441.82125,620.40781262)
\curveto(432.1527,619.06581262)(432.28353,619.05786262)(431.45693,621.03706262)
\closepath
}
}
{
\newrgbcolor{curcolor}{0.90196079 0.90196079 0.90196079}
\pscustom[linestyle=none,fillstyle=solid,fillcolor=curcolor]
{
\newpath
\moveto(343.81168,615.83009262)
\curveto(226.93048,620.92506262)(130.54164,663.19212262)(110.13942,718.29671262)
\curveto(105.09749,731.91451262)(104.91042,747.18270262)(109.62366,760.39188262)
\curveto(113.83056,772.18201262)(120.73126,782.63779262)(131.314,793.25655262)
\curveto(168.22079,830.28895262)(240.75956,856.37059262)(325.31168,863.00928262)
\curveto(335.76062,863.82969262)(355.26748,864.75257262)(362.45427,864.76652262)
\curveto(366.50614,864.77452262)(366.81006,864.70622262)(366.37233,863.88835262)
\curveto(363.52266,858.56370262)(361.09344,835.55620262)(359.20706,796.02504262)
\curveto(358.37214,778.52848262)(358.37593,700.99581262)(359.21206,683.52504262)
\curveto(361.07378,644.64756262)(363.5388,621.45530262)(366.37183,616.16173262)
\curveto(366.81635,615.33115262)(366.41023,615.27985262)(359.95378,615.35096262)
\curveto(356.16285,615.39266262)(348.89868,615.60832262)(343.81118,615.83009262)
\closepath
}
}
{
\newrgbcolor{curcolor}{0.80000001 0.80000001 0.80000001}
\pscustom[linestyle=none,fillstyle=solid,fillcolor=curcolor]
{
\newpath
\moveto(367.27246,616.24994262)
\curveto(364.22117,621.02130262)(361.83983,642.26482262)(359.91229,681.90882262)
\curveto(359.06791,699.27542262)(358.75958,758.28520262)(359.39769,780.39837262)
\curveto(360.73775,826.83744262)(363.64837,858.06445262)(367.13245,863.38183262)
\curveto(367.95518,864.63747262)(368.0728,864.65882262)(374.1668,864.65882262)
\curveto(386.87592,864.65882262)(408.142,863.39645262)(426.02678,861.58038262)
\curveto(431.75545,860.99867262)(431.84579,860.97013262)(431.19091,859.94857262)
\curveto(427.92406,854.85252262)(425.01165,827.97869262)(423.38372,787.90882262)
\curveto(422.60676,768.78480262)(422.60388,710.83004262)(423.37872,691.90882262)
\curveto(424.95445,653.44120262)(427.71411,627.10599262)(430.81265,620.96747262)
\curveto(431.3482,619.90649262)(431.38838,619.43229262)(430.9649,619.17057262)
\curveto(429.42494,618.21882262)(398.26417,615.95021262)(379.35461,615.41317262)
\lineto(368.01334,615.09107262)
\lineto(367.27223,616.24994262)
\lineto(367.27246,616.24994262)
\closepath
}
}
{
\newrgbcolor{curcolor}{0 0 0}
\pscustom[linewidth=0.99999997,linecolor=curcolor,linestyle=dashed,dash=4.37344397 8.74688793]
{
\newpath
\moveto(367.57909469,615.00000766)
\curveto(373.18326341,615.00000581)(377.72633991,670.97491789)(377.72634006,740.02347303)
\curveto(377.72634021,807.73779881)(373.35095158,863.14458617)(367.85713607,865.00000275)
}
}
{
\newrgbcolor{curcolor}{0 0 0}
\pscustom[linewidth=1.00000001,linecolor=curcolor]
{
\newpath
\moveto(368.08260217,615.00000374)
\curveto(362.80746703,615.00000189)(358.53112495,670.97491478)(358.53112481,740.02347092)
\curveto(358.53112467,807.73779768)(362.64962421,863.14458584)(367.82088523,865.00000245)
}
}
{
\newrgbcolor{curcolor}{0 0 0}
\pscustom[linewidth=0.99999999,linecolor=curcolor]
{
\newpath
\moveto(432.85623246,618.99999872)
\curveto(427.33388679,618.99999693)(422.85714098,673.1837126)(422.85714083,740.02271496)
\curveto(422.85714069,805.57018327)(427.16864693,859.20395422)(432.58225058,860.99999749)
}
}
{
\newrgbcolor{curcolor}{0 0 0}
\pscustom[linewidth=0.69999995,linecolor=curcolor,linestyle=dashed,dash=3.11160078 6.22320153]
{
\newpath
\moveto(432.70989522,618.97727872)
\curveto(438.31406487,618.97727693)(442.85714212,673.1609926)(442.85714227,739.99999496)
\curveto(442.85714242,805.54746327)(438.48175307,859.18123422)(432.98793666,860.97727749)
}
}
{
\newrgbcolor{curcolor}{0 0 0}
\pscustom[linewidth=0.99999999,linecolor=curcolor]
{
\newpath
\moveto(629.99999936,739.99999996)
\curveto(629.99999936,670.96440741)(512.63464277,615.0000021)(367.85713836,615.0000021)
\curveto(223.07963395,615.0000021)(105.71427736,670.96440741)(105.71427736,739.99999996)
\curveto(105.71427736,809.0355925)(223.07963395,864.99999782)(367.85713836,864.99999782)
\curveto(512.63464277,864.99999782)(629.99999936,809.0355925)(629.99999936,739.99999996)
\closepath
}
}
{
\newrgbcolor{curcolor}{0 0 0}
\pscustom[linewidth=1,linecolor=curcolor]
{
\newpath
\moveto(395,820.00000262)
\lineto(395,885.00000262)
}
}
{
\newrgbcolor{curcolor}{0 0 0}
\pscustom[linestyle=none,fillstyle=solid,fillcolor=curcolor]
{
\newpath
\moveto(395,815.38400262)
\lineto(391,822.30400262)
\lineto(399,822.30400262)
\lineto(395,815.38400262)
\closepath
}
}
{
\newrgbcolor{curcolor}{0 0 0}
\pscustom[linewidth=1,linecolor=curcolor]
{
\newpath
\moveto(395,815.38400262)
\lineto(391,822.30400262)
\lineto(399,822.30400262)
\lineto(395,815.38400262)
\closepath
}
}
{
\newrgbcolor{curcolor}{0 0 0}
\pscustom[linewidth=1,linecolor=curcolor]
{
\newpath
\moveto(434,666.00000262)
\lineto(530,630.00000262)
}
}
{
\newrgbcolor{curcolor}{0 0 0}
\pscustom[linestyle=none,fillstyle=solid,fillcolor=curcolor]
{
\newpath
\moveto(429.67790452,667.62078842)
\lineto(437.56179619,668.93633092)
\lineto(434.75280866,661.4456975)
\lineto(429.67790452,667.62078842)
\closepath
}
}
{
\newrgbcolor{curcolor}{0 0 0}
\pscustom[linewidth=1,linecolor=curcolor]
{
\newpath
\moveto(429.67790452,667.62078842)
\lineto(437.56179619,668.93633092)
\lineto(434.75280866,661.4456975)
\lineto(429.67790452,667.62078842)
\closepath
}
}
\rput(240,740){{\small $G_1$ on $P_1\!\x\!B'_1$}}
\rput(530,740){{\small $G_2$ on $P_2\x B'_2$}}
\rput(380,900){Concordance metric on $P_1\!\x\!P_2\!\x\!Q\!\x\![-1,0]$}
\rput(560,620){$\partial A_1$}
\end{pspicture}
\caption{Construction of a pscm on $A_1\cup A_2$}\label{rho2}\end{figure}
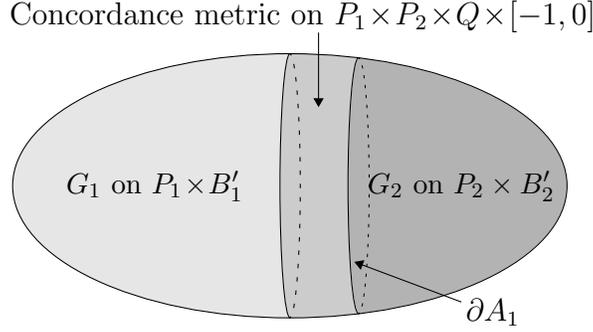

The arduousness of the proof of Thm.~ \ref{pscmprop} for $\mathscr{P}$-manifolds consisting of more $P_i$-parts merely lies in the fact that metrics on diverse submanifolds have to be chosen in a compatible way.

For simplicity we omit diffeomorphisms in the sequel. Let $M=A_1\cup\ldots\cup A_k$ be a $\mathscr{P}$-manifold. As above fix bicollar neighborhoods $\partial A_i\x[-1,1]$ of $\partial A_i\subset M$, say $\partial A_i\x\{-1\}\subset A_i$, such that $(\partial A_i\x[-1,1])\cap A_j$ is a bicollar neighborhood for $(\partial A_i)\cap A_j$ in $A_j$ for all $j$. Define a new covering by  $A'_1:=A_1$ and, for $i>1$, $A'_i:=A_i-(\cup_{j<i}\mathring{A}_j)$. Then one has $A'_i=P_i\x B'_i$ for appropriate $B'_i\subset B_i$.

We set $A^j=\cup_{i=j}^kA'_i$. For $1< j\leq k$ note that $A^j$ is modelled on $\mathbb{H}_1^n\cap\ldots\cap\mathbb{H}_{j-1}^n$ and inherits collar neighborhoods $(\partial A_i\cap A^j)\x[0,1]$ for all $i<j$. By a collar metric on $A^j$ we understand a metric which
\begin{itemize}
\item extends to a smooth metric on $M$,
\item restricts for all $i<j$ to a product on $(\partial A_i\cap A^j)\x[0,1]$ with the standard metric on the second factor.
\end{itemize}
The same notation is used for other manifolds modelled on intersections of half spaces, like $A'_j$, $B'_j$ and $Q^j_I$ (defined in the sequel). All upcoming metrics are supposed to be collar metrics.

For all $1\leq j\leq k$ and
\begin{equation}\label{folge}
I=\{i_1,\ldots,i_s\},\, 1\leq i_1<\ldots< i_s \leq j-1
\end{equation}
there exists a manifold $Q^j_{I}$ such that
\begin{equation}\label{schnitt}
\left(\bigcap_{i\in I}\partial A_i\right)\cap A'_j=P_{i_1}\x\ldots\x P_{i_s }\x P_j\x Q^j_{I}.
\end{equation}
With this notation we have $Q_{\emptyset}^i=B'_i$. Note that in particular, since $(\cap_{i=1}^{k-1}\partial A_i)$ lies in $A_k$,
\[
\left(\bigcap_{i=1}^{k-1} \partial A_i\right)=P_1\x\ldots\x P_k\x Q^k_{\{1,\ldots,k-1\}}.
\]
The manifold $Q:=Q^k_{\{1,\ldots,k-1\}}$ can be described as the 'deepest' point of $M$. Now choose pscm $g_i$ on $P_i$ for $1\leq i<k$ and a metric $h$ on $Q$. We need the above concordance argument in the following form.
\begin{lem}\label{2con} Assume that there is a $1\leq j<k$ and a pscm $G^{j+1}$ on $A^{j+1}$. One finds an $R\subset\partial B'_j$ such that $\partial A_j\cap A^{j+1}=P_j\x R$. In fact, $R=\left(\cup_{i=j+1}^k P_i\x Q^i_j\right)$, cf.~ figure \ref{rho3}. Assume further that $G^{j+1}|_{P_j\x R}=g_j\x h_j|_{R}$ for some metric $h_j$ on $B'_j$. Then there exists an extension of $G^{j+1}$ to a pscm $G^j$ on $A^j$.

\end{lem}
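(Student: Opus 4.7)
The plan is to adapt the two-piece concordance construction sketched just before the lemma statement for $M=A_1\cup A_2$: build a pscm of product form $\epsilon g_j\x h_j$ on the bulk of $A'_j=P_j\x B'_j$ for a suitably small $\epsilon>0$, and join it to $G^{j+1}$ across the common face $P_j\x R$ by a concordance which rescales only the $P_j$-direction.

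First I would extend $h_j$ to a collar metric on all of $B'_j$ whose restriction to $R\subset\partial B'_j$ is the prescribed $h_j|_R$ and which is simultaneously a collar metric with respect to the other faces $(\partial A_i\cap A'_j)/P_j\subset\partial B'_j$ for $i<j$; such an $h_j$ can be produced by standard partition-of-unity patching of local collars. Since $g_j$ is pscm on the compact manifold $P_j$, $\mathrm{scal}(g_j)\geq c>0$, and
\[
\mathrm{scal}(\epsilon g_j\x h_j)=\epsilon^{-1}\mathrm{scal}(g_j)+\mathrm{scal}(h_j),
\]
so by compactness of $A'_j$ there is an $\epsilon>0$ small enough that $\epsilon g_j\x h_j$ is a pscm on $A'_j$.

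Next I would observe that the family $tg_j\x h_j|_R$, $t\in[\epsilon,1]$, is an isotopy of pscm on the closed manifold $P_j\x R$: the metric $g_j\x h_j|_R$ is itself pscm, because $G^{j+1}$ is a collar metric equal to $g_j\x h_j|_R$ plus the standard $ds^2$ in the collar of $P_j\x R$ inside $A^{j+1}$ and is pscm by hypothesis; rescaling the $P_j$-factor by $t\leq 1$ then can only increase the scalar curvature. By the classical fact that isotopic pscm on a closed manifold are concordant through a pscm (Hitchin, Gromov--Lawson, Gajer), one obtains a pscm $H$ on $P_j\x R\x[0,1]$ which has collar product form $g_j\x h_j|_R$ near $s=0$ and $\epsilon g_j\x h_j|_R$ near $s=1$.

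Finally I would assemble $G^j$ from three pieces that agree on their overlaps by construction: $G^{j+1}$ on $A^{j+1}$; the concordance $H$ placed on the collar $P_j\x R\x[0,1]$ of $P_j\x R$ inside $A'_j$ coming from the chosen collar of $R$ in $B'_j$; and $\epsilon g_j\x h_j$ on the remaining part of $A'_j$. The main obstacle I expect is the bookkeeping of collars rather than any new geometric input: the collar of $R$ in $B'_j$, the collars of $\partial A_i\cap A'_j$ for $i<j$, and the resulting corner neighborhoods must be chosen simultaneously and coherently so that $G^j$ is again a collar metric in the iterated sense used throughout this section, which is precisely what is needed in order to apply the same lemma once more and extend from $A^j$ across $A'_{j-1}$ to $A^{j-1}$.
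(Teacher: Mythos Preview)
Your proposal is correct and follows essentially the same route as the paper: shrink the $P_j$-factor by $\epsilon$ to get a pscm on $A'_j$, use that $g_j\times h_j|_R$ and $\epsilon g_j\times h_j|_R$ are isotopic (hence concordant) pscm on $P_j\times R$, and glue $G^{j+1}$, the concordance, and the scaled product metric across the collar of $R$ in $B'_j$. One small point: you phrase your first step as \emph{extending} $h_j$ to $B'_j$, but the hypothesis already hands you a metric $h_j$ on all of $B'_j$ (and the paper's standing convention makes all such metrics collar metrics), so no extension is needed; your concern about compatibility with the remaining faces $\partial A_i\cap A'_j$ for $i<j$ is legitimate and is exactly what the paper tracks separately in the inductive Lemma~\ref{schw}.
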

\begin{proof}
We agreed that $h_j$ is a collar metric for the induced collar neighborhood $R\x[-1,0]$ of $R$ in $B'_j$. One finds an $\epsilon>0$ such that $(\epsilon g_j)\x h_j$ is a pscm on $A'_j$. Since $G^{j+1}$ is a collar metric, $g_j\x h_j|_R$ and thus $(\epsilon g_j)\x h_j|_R$ are pscm. It is obvious that they are isotopic, hence concordant. Denote by $G$ the concordance metric on $(P_j\x R)\x[-1,0]$. Now we define a pscm on
\[
A^j=A^{j+1}\cup (P_j\x R\x[-1,0])  \cup (A'_j-(P_j\x R\x[-1,0]))
\]
by
\[
G^{j+1}\cup G \cup \left((\epsilon g_j)\x h_j|_{B'_j-(R\x[-1,0])}\right).
\]
\end{proof}

Let us describe our strategy how to construct a pscm on $M$ in terms of a $\mathscr{P}$-manifold with three $P_i$-parts (see figure \ref{rho3}). Recall that we chose a metric $h$ on $Q$ and pscm $g_1$ resp.~ $g_2$ on $P_1$ resp.~ $P_2$.

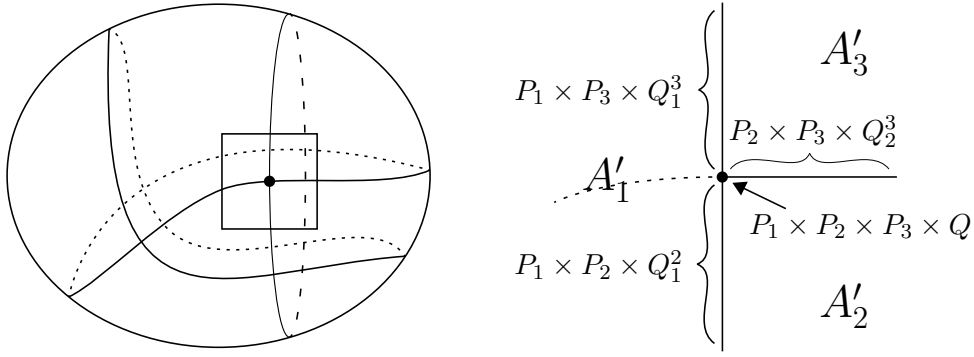
\begin{figure}
\psset{xunit=.6pt,yunit=.6pt,runit=.6pt}
\begin{pspicture}(150,735)(600,978)
\rput(620,770){{\Large $A'_2$}}
\rput(620,930){{\Large $A'_3$}}
\rput(470,850){{\Large $A'_1$}}
\rput(600,880){{\small $P_2\x P_3\x Q^3_2$}}
\rput(465,905){{\small $P_1\x P_3\x Q^3_1$}}
\rput(465,795){{\small $P_1\x P_2\x Q^2_1$}}
\rput(630,820){{\small $P_1\x P_2\x P_3\x Q$}}
{
\newrgbcolor{curcolor}{0 0 0}
\pscustom[linewidth=1,linecolor=curcolor]
{
\newpath
\moveto(543.01802,851.12708262)
\lineto(653.01802,851.12708262)
}
}
{
\newrgbcolor{curcolor}{0 0 0}
\pscustom[linewidth=1,linecolor=curcolor]
{
\newpath
\moveto(543.01802,961.12708262)
\lineto(543.01802,741.12708262)
}
}
{
\newrgbcolor{curcolor}{0 0 0}
\pscustom[linewidth=1.00000002,linecolor=curcolor,linestyle=dashed,dash=2.61823171 5.23646343]
{
\newpath
\moveto(543.0180107,851.12708127)
\curveto(498.87945387,851.30627351)(458.01183554,845.12850628)(436.96061397,835.09487933)
}
}
{
\newrgbcolor{curcolor}{0 0 0}
\pscustom[linewidth=1,linecolor=curcolor]
{
\newpath
\moveto(583.01802,831.12708262)
\lineto(554.34732,845.00000262)
}
}
{
\newrgbcolor{curcolor}{0 0 0}
\pscustom[linestyle=none,fillstyle=solid,fillcolor=curcolor]
{
\newpath
\moveto(550.19218493,847.01055194)
\lineto(558.16353068,847.59710717)
\lineto(554.67904312,840.39583323)
\lineto(550.19218493,847.01055194)
\closepath
}
}
{
\newrgbcolor{curcolor}{0 0 0}
\pscustom[linewidth=1,linecolor=curcolor]
{
\newpath
\moveto(550.19218493,847.01055194)
\lineto(558.16353068,847.59710717)
\lineto(554.67904312,840.39583323)
\lineto(550.19218493,847.01055194)
\closepath
}
}
{
\newrgbcolor{curcolor}{0 0 0}
\pscustom[linestyle=none,fillstyle=solid,fillcolor=curcolor]
{
\newpath
\moveto(546.47598402,851.09999298)
\curveto(546.47598402,849.14601333)(544.89197054,847.56199985)(542.93799089,847.56199985)
\curveto(540.98401123,847.56199985)(539.39999775,849.14601333)(539.39999775,851.09999298)
\curveto(539.39999775,853.05397264)(540.98401123,854.63798612)(542.93799089,854.63798612)
\curveto(544.89197054,854.63798612)(546.47598402,853.05397264)(546.47598402,851.09999298)
\closepath
}
}
{
\newrgbcolor{curcolor}{0 0 0}
\pscustom[linewidth=1.00000002,linecolor=curcolor]
{
\newpath
\moveto(538.01801387,846.12708132)
\curveto(521.54687027,834.12708102)(543.50839507,802.12708022)(527.03725147,796.12708007)
}
}
{
\newrgbcolor{curcolor}{0 0 0}
\pscustom[linewidth=1.00000002,linecolor=curcolor]
{
\newpath
\moveto(538.01801387,746.12707882)
\curveto(521.54687027,758.12707912)(543.50839507,790.12707992)(527.03725147,796.12708007)
}
}
{
\newrgbcolor{curcolor}{0 0 0}
\pscustom[linewidth=0.70000001,linecolor=curcolor]
{
\newpath
\moveto(270.01801159,750.12708277)
\curveto(263.11495407,750.12708126)(257.51892013,795.80260961)(257.51891995,852.14623072)
\curveto(257.51891976,907.40112067)(262.90840049,952.61305924)(269.67552802,954.12707917)
}
}
{
\newrgbcolor{curcolor}{0 0 0}
\pscustom[linewidth=1,linecolor=curcolor,linestyle=dashed,dash=2 4]
{
\newpath
\moveto(358.60755,855.43617262)
\curveto(153.69366,910.02277262)(131.00028,775.49807262)(131.00028,775.49807262)
}
}
{
\newrgbcolor{curcolor}{0 0 0}
\pscustom[linewidth=1,linecolor=curcolor,linestyle=dashed,dash=2 4]
{
\newpath
\moveto(156.38895,944.52750262)
\curveto(179.73182,935.36957262)(156.49249,854.23095262)(193.66054,818.05420262)
\curveto(229.79846,782.88009262)(321.24827,834.51184262)(343.01802,801.12708262)
}
}
{
\newrgbcolor{curcolor}{0 0 0}
\pscustom[linewidth=0.99999999,linecolor=curcolor,linestyle=dashed,dash=4.84148413 9.68296826]
{
\newpath
\moveto(270.01801469,750.12708277)
\curveto(275.62218341,750.12708126)(280.16525991,795.80260961)(280.16526006,852.14623072)
\curveto(280.16526021,907.40112067)(275.78987158,952.61305924)(270.29605607,954.12707917)
}
}
{
\newrgbcolor{curcolor}{0 0 0}
\pscustom[linewidth=1.00000003,linecolor=curcolor]
{
\newpath
\moveto(358.71622206,851.89086615)
\curveto(358.71622206,792.11118076)(299.02280314,743.65017075)(225.38723919,743.65017075)
\curveto(151.75167524,743.65017075)(92.05825633,792.11118076)(92.05825633,851.89086615)
\curveto(92.05825633,911.67055153)(151.75167524,960.13156155)(225.38723919,960.13156155)
\curveto(299.02280314,960.13156155)(358.71622206,911.67055153)(358.71622206,851.89086615)
\closepath
}
}
{
\newrgbcolor{curcolor}{0 0 0}
\pscustom[linewidth=1,linecolor=curcolor]
{
\newpath
\moveto(131.00028,775.49807262)
\curveto(152.11701,785.26941262)(191.8137,829.24002262)(227.10033,843.10182262)
\curveto(258.28761,855.35326262)(328.27173,842.07664262)(358.60755,855.43617262)
}
}
{
\newrgbcolor{curcolor}{0 0 0}
\pscustom[linewidth=1,linecolor=curcolor]
{
\newpath
\moveto(156.38895,944.52750262)
\curveto(147.21757,733.11368262)(230.70574,792.81826262)(343.01802,801.12708262)
}
}
{
\newrgbcolor{curcolor}{0 0 0}
\pscustom[linewidth=1,linecolor=curcolor]
{
\newpath
\moveto(227.51800537,878.3270874)
\lineto(287.51800537,878.3270874)
\lineto(287.51800537,818.3270874)
\lineto(227.51800537,818.3270874)
\closepath
}
}
{
\newrgbcolor{curcolor}{0 0 0}
\pscustom[linewidth=1.00000001,linecolor=curcolor]
{
\newpath
\moveto(538.01801395,956.12708132)
\curveto(523.01801455,944.12708102)(543.01801375,912.12708022)(528.01801435,906.12708007)
}
}
{
\newrgbcolor{curcolor}{0 0 0}
\pscustom[linewidth=1.00000001,linecolor=curcolor]
{
\newpath
\moveto(538.01801395,856.12707882)
\curveto(523.01801455,868.12707912)(543.01801375,900.12707992)(528.01801435,906.12708007)
}
}
{
\newrgbcolor{curcolor}{0 0 0}
\pscustom[linewidth=0.3938525,linecolor=curcolor]
{
\newpath
\moveto(648.0180187,856.12707866)
\curveto(636.0180184,871.12707806)(604.0180176,851.12707886)(598.01801745,866.12707826)
}
}
{
\newrgbcolor{curcolor}{0 0 0}
\pscustom[linewidth=0.3938525,linecolor=curcolor]
{
\newpath
\moveto(548.0180162,856.12707866)
\curveto(560.0180165,871.12707806)(592.0180173,851.12707886)(598.01801745,866.12707826)
}
}
{
\newrgbcolor{curcolor}{0 0 0}
\pscustom[linestyle=none,fillstyle=solid,fillcolor=curcolor]
{
\newpath
\moveto(261.03698622,848.40799298)
\curveto(261.03698622,846.45401333)(259.45297274,844.86999985)(257.49899309,844.86999985)
\curveto(255.54501343,844.86999985)(253.96099995,846.45401333)(253.96099995,848.40799298)
\curveto(253.96099995,850.36197264)(255.54501343,851.94598612)(257.49899309,851.94598612)
\curveto(259.45297274,851.94598612)(261.03698622,850.36197264)(261.03698622,848.40799298)
\closepath
}
}
\end{pspicture}
\caption{$\mathscr{P}$-manifold and 'deepest' point neighborhood}\label{rho3}\end{figure}

\begin{enumerate}
\item Extend $g_2\x h$ to a metric $h^3_1$ on $Q^3_1$ and $h^3_2$ on $Q^3_2$.
\item Extend $(g_1\x h^3_1)\cup (g_2\x h^3_2)$ to a metric $h_3$ on $B'_3$.
\item Choose a pscm $g_3$ on $P_3$ such that $G^3:=g_3\x h_3$ is a pscm on $A'_3$.
\item Extend $g_3\x h$ to a metric $h^2_1$ on $Q^2_1$ and $(g_1\x h^2_1)\cup(g_3\x h^3_2)$ to a metric $h_2$ on $B_2'$.
\item Apply Lemma \ref{2con} to extend $G^3$ to a pscm $G^2$ on $A'_2\cup A'_3$. Observe that Lemma \ref{2con} gives us a metric $f_1$ on $P_2\x Q^2_1$ such that $G^2|_{P_1\x (P_2\x Q^2_1)}=g_1\x f_1$.
\item Extend $(g_3\x h_1^3)\cup f_1$ to a metric $h_1$ on $B'_1$.
\item Apply Lemma \ref{2con} to extend $G^2$ to pscm a $G^1$ on $A'_1\cup A'_2\cup A'_3$.
\end{enumerate}

Turn back to the general case. For the upcoming construction it is very helpful to keep figure \ref{rho3} in mind. Let $1\leq j\leq k$ and $1\leq r< j$. One verifies that
\[
\partial A^j=\bigcup_{r=1}^{j-1}\left(\partial A_r\cap A^j\right)
\]
and
\[
\partial A_r\cap A^j=P_r\x\left(\bigcup_{i=j}^k P_i\x Q^i_r\right).
\]

We shall prove the following statement by induction over $j$, starting with $j=k$ and ending with $j=1$, from which Thm.~ \ref{pscmprop} follows immediately.

\begin{lem}\label{schw} There exists a pscm $G^j$ on $A^j$ with compatible product structures on $\partial A^j$ which means that for all $1\leq r<j$ there are metrics $f^j_r$ on $\cup_{i=j}^k P_i\x Q^i_r$ such that
\begin{equation}\label{comp2}
G^j|_{\partial A_r\cap A^j}=g_r\x f^j_r,
\end{equation}
where $g_r$ are the fixed pscm from above. \end{lem}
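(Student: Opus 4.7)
The plan is to prove Lemma \ref{schw} by descending induction on $j$, from $j=k$ down to $j=1$, with Lemma \ref{2con} doing the gluing at each step. The recurring technical task is to assemble a collar metric $h_j$ on $B'_j$ whose restriction to each boundary face $P_r\x Q^j_r$ (for $r<j$) factors as $g_r$ times some metric $f^j_r$ on $Q^j_r$, and -- in the inductive step -- whose restriction to the face $R=\bigcup_{i=j+1}^{k}P_i\x Q^i_j$ coincides with the datum $f^{j+1}_j$ prescribed by $G^{j+1}$. Given such an $h_j$, Lemma \ref{2con} immediately delivers $G^j$ with the required compatibility, so the whole argument reduces to building these collar metrics coherently.

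For the base case $j=k$, where $A^k=A'_k=P_k\x B'_k$, I would build $h_k$ on $B'_k$ by a secondary descending induction on $|I|$ over the strata $Q^k_I\subset B'_k$ indexed by $I\subset\{1,\ldots,k-1\}$: start from an arbitrary metric $h$ on the deepest stratum $Q=Q^k_{\{1,\ldots,k-1\}}$; given compatible collar metrics on all $Q^k_{I'}$ with $|I'|>|I|$, extend to $Q^k_I$ so that on each codimension-one boundary face $P_r\x Q^k_{I\cup\{r\}}$ (with $r\notin I$, $r<k$) the result factors as $g_r$ times the previously chosen metric on $Q^k_{I\cup\{r\}}$. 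Such collar extensions with prescribed boundary behaviour are standard. At $|I|=0$ this produces $h_k$ on $B'_k$, and then for $\epsilon>0$ sufficiently small, $G^k:=(\epsilon g_k)\x h_k$ is a pscm on $A'_k$ with the required boundary product form, where the $f^k_r$ are read off from the restrictions.

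The inductive step $j+1\Rightarrow j$ is the same stratum induction on $Q^j_I\subset B'_j$, except it is now initialised with the prescribed metric $f^{j+1}_j$ on the face $R$ rather than an arbitrary metric on the deepest stratum. The compatibility hypothesis on $G^{j+1}$ guarantees that $f^{j+1}_j$, restricted to any sub-face of $R$ of the form $P_r\x(\ldots)$ with $r<j$, already has the correct $g_r$-product structure coming from $f^{j+1}_r$; hence the stratum-by-stratum extension can propagate this simultaneously through every face of $B'_j$ without collision. Applying Lemma \ref{2con} to the resulting $h_j$ produces $G^j$ on $A^j$, and the compatibility identity $G^j|_{\partial A_r\cap A^j}=g_r\x f^j_r$ follows by combining the product structure of the constructed $h_j$ (on the $\partial A_r\cap A'_j$ part) with the inductive product structure of $G^{j+1}$ (on the $\partial A_r\cap A^{j+1}$ part), the two agreeing along their common interface because both extend $g_r\x f^{j+1}_r$ restricted to $P_r\x(R\cap Q^j_r)$.

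The main obstacle is purely combinatorial: each $B'_j$ has many boundary faces meeting in codimensions two and higher, and the collar metric $h_j$ must respect a prescribed product structure on every face and every intersection of faces simultaneously. The double induction -- outer on $j$, inner on $|I|$ -- is the natural way to organise this, since at each inner stage one only needs to match metrics on the boundary of the current stratum, where the preceding inner stages have already enforced pairwise compatibility. Once this bookkeeping is in place, the analytic content is entirely absorbed by Lemma \ref{2con} and the standard fact that collar metrics with arbitrarily prescribed collar data and sufficiently small collar widths exist.
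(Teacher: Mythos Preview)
Your proposal is correct and follows essentially the same approach as the paper: descending induction on $j$ with Lemma~\ref{2con} for the gluing, and an inner stratum induction on $|I|$ to build the collar metric $h_j$ on $B'_j$ with the required product structure on every boundary face. One small point worth making explicit (the paper does so in one line): in the inductive step, the concordance metric produced by Lemma~\ref{2con} only interpolates the $P_j$-factor between $g_j$ and $\epsilon g_j$, so on the sub-faces $P_r\times P_j\times Q^j_r$ with $r<j$ the $g_r$-factor is untouched throughout the collar; this is what ensures that $G^j$ still splits as $g_r\times f^j_r$ on all of $\partial A_r\cap A^j$, including across the concordance region, and not merely on the parts covered by $(\epsilon g_j)\times h_j$ and by $G^{j+1}$ separately.
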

\begin{proof}
For the initial step, which corresponds to the steps (1)~ -~ (3) above, one has to consider $A'_k$. First we define a metric $h_k$ on $B'_k$. Observe that for $J\subsetneq\{1,\ldots,k-1\}$
\begin{equation}\label{randq}  
\partial Q_J^k=\bigcup_{\{S\subset\{1,\dots,k-1\}\, |\, J\subset S,\ |S|=|J|+1\}}P_{S-J}\x Q_S^k.
\end{equation} 
Step by step, starting with $|J|=k-2$ and ending with $J=\emptyset$, we extend the metric $h$ on the 'deepest' point $Q$ to metrics $h_J^k$ on $Q_J^k$ such that $h_J^k|_{(P_r\x Q^k_{J\cup\{r\}})\subset\partial Q_J^k}=g_r\x h_{J\cup\{r\}}^k$ for $\{r\}= I-J$.

One obtains a metric $h_k:=h^k_{\emptyset}$ on $B'_k=Q_{\emptyset}^k$
such that for all $1\leq r< k$
\begin{equation}\label{comp1}
h_k|_{(P_r\x Q^k_r)\subset\partial B_k'}= g_r\x h^k_r.
\end{equation}
Now choose a pscm $g_k$ on $P_k$ such that $G^k:=g_k\x h_k$ is a pscm on $A'_k$. Set $f^k_r:=g_k\x h^k_r$. By means of \ref{comp1} the condition \ref{comp2} is satisfied. This finishes the induction basis.

We turn to the induction step which corresponds to the steps (4)~ -~ (5) resp.~ (6)~ -~ (7) above. Let $1\leq j<k$ and $G^{j+1}$ a pscm on $A^{j+1}$ such that \ref{comp2} is satisfied. First we define a metric on
\[
\partial B'_j=\left(\bigcup_{r=1}^{j-1}P_r\x Q^j_r\right)\cup \left(\bigcup_{i=j+1}^k P_i\x Q^i_j\right)
\]
as follows. As above one finds metrics $h^j_r$ on $Q^j_r$ and we consider $g_r\x h^j_r$ on $P_r\x Q^j_r$ for all $r\leq j-1$. By induction hypothesis there is the metric $f^{j+1}_j$ on $\cup_{i=j+1}^k P_i\x Q^i_j$. Now extend
\[
\left(\bigcup_{r=1}^{j-1}g_r\x h^j_r\right)\cup f^{j+1}_j
\]
to a metric $h_j$ on $B_j'$.

Finally we apply Lemma \ref{2con} to obtain a pscm $G^j$ on $A^j$. We note that the concordance metric in Lemma \ref{2con} does not alter the $g_r$ factor on $P_r\x P_j\x Q_r^j$. Hence, for all $r\leq j-1$, there is an induced metric $f_r$ on $P_j\x Q_r^j$ such that $G^j|_{P_r\x (P_j\x Q_r^j)}=g_r\x f_r$. By means of the induction hypothesis one verifies that condition \ref{comp2} is satisfied with $f^j_r:=f_r\cup f_r^{j+1}$ on $\cup_{i=j}^kP_i\x Q_r^i$ .
\end{proof}


\providecommand{\bysame}{\leavevmode\hbox to3em{\hrulefill}\thinspace}
\providecommand{\MR}{\relax\ifhmode\unskip\space\fi MR }
\providecommand{\MRhref}[2]{%
  \href{http://www.ams.org/mathscinet-getitem?mr=#1}{#2}
}
\providecommand{\href}[2]{#2}

\end{document}